\definecolor{farbe}{RGB}{121,60,0}
\definecolor{Farbe}{RGB}{139,0,139}
\tikzset{>=stealth',
     cvertex/.style={circle,draw=black,inner sep=1pt,outer sep=3pt},
     vertex/.style={circle,fill=black,inner sep=1pt,outer sep=3pt},
     star/.style={circle,fill=yellow,inner sep=0.75pt,outer sep=0.75pt},
     tvertex/.style={inner sep=1pt,font=\criptsize},
     gap/.style={inner sep=0.5pt,fill=white}}
\newcommand{\PP}{\ensuremath{\mathbb{P}}}
\newcommand{\RR}{\ensuremath{\mathbb{R}}}
\newcommand{\ZZ}{\ensuremath{\mathbb{Z}}}
\newcommand{\CC}{\ensuremath{\mathbb{C}}}
\newcommand{\QQ}{\ensuremath{\mathbb{Q}}}
\newcommand{\Proj}{\ensuremath{\mathbb{P}}}
\newcommand{\D}{\ensuremath{\partial}}
\newcommand{\F}{\ensuremath{\mathcal{F}}} 
\DeclareMathOperator{\add}{add}
\DeclareMathOperator{\Bl}{Bl}
\DeclareMathOperator{\Ext}{Ext}
\DeclareMathOperator{\Hom}{Hom}
\DeclareMathOperator{\mmod}{mod}
\DeclareMathOperator{\Spec}{Spec}
\newcommand{\reg}{{\rm reg}}
\newcommand{\lb}{\llbracket}
\newcommand{\rb}{\rrbracket}
\newcommand{\mc}[1]{\ensuremath{\mathcal{#1}}}   
\newcommand{\ms}[1]{\ensuremath{\mathscr{#1}}}   
\newcommand{\mf}[1]{\ensuremath{\mathfrak{#1}}}   
\newcommand{\col}[1]{{\color{lightgray} #1}}
\newcommand{\rot}[1]{{\color{red} #1}}
\newcommand{\cF}{\mathcal{F}}
\newcounter{CountAlpha}
\theoremstyle{theorem}
\newtheorem{MainThm}[CountAlpha]{Theorem}  
\newtheorem{Thm}{Theorem}[section]         
\newtheorem{lemma}[Thm]{Lemma}
\newtheorem{cor}[Thm]{Corollary}
\newtheorem{corollary}[Thm]{Corollary}
\newtheorem{proposition}[Thm]{Proposition} 
\newtheorem{Qu}[Thm]{Question}
\newtheorem{lem-def}[Thm]{Lemma-Definition}
\theoremstyle{definition}
\newtheorem{defi}[Thm]{Definition} 
\newtheorem{example}[Thm]{Example}
\newtheorem{Bem}[Thm]{Remark}
\newtheorem{Not}[Thm]{Notation}
\newtheorem{DefCon}[Thm]{Definition/Construction}
\numberwithin{equation}{subsection}
\title[Friezes and resolutions]{Frieze patterns and combinatorics of curve singularities}
\author{Eleonore Faber}
\address{
Institut f\"ur Mathematik und Wissenschaftliches Rechnen,
Universit\"at Graz,
Heinrichstr.~36,
A-8010 Graz, Austria and School of Mathematics, University of Leeds, LS2 9JT Leeds, UK
}
\email{e.m.faber@leeds.ac.uk, eleonore.faber@uni-graz.at}
\author{Bernd Schober}
\address{
	None. (Hamburg, Germany).
}
\email{schober.math@gmail.com}
\date{\today}
\subjclass[2020]{32S45, 13F60, 
14B05, 14E15, 14J17, 
14H20, 
 16G20, 
18G80 
} 
\keywords{friezes, continuant polynomials, lotus, plane curve singularity, dual resolution graph, cluster combinatorics, Iyama--Yoshino reduction}
\begin{document}

\maketitle

\begin{abstract}
We study the connection between Conway--Coxeter frieze patterns and the data of the minimal resolution of a complex curve singularity: using Popescu-Pampu's notion of the lotus of a singularity, we describe a bijection between the dual resolution graphs of Newton non-degenerate plane curve singularities and Conway--Coxeter friezes. We use representation theoretic reduction methods to interpret some of the entries of the frieze coming from the partial resolutions of the corresponding curve singularity. Finally, we translate the notion of mutation, coming from cluster combinatorics, to resolutions of plane complex curves.
\end{abstract}


\section{Introduction}

This paper is concerned with uncovering relations between two at first sight very different topics: frieze patterns with positive integer entries on the one hand, and the resolution of singularities of complex plane curves on the other hand. 

\emph{Friezes} are arrays of numbers consisting of a finite number of infinite rows and are usually written in an offset fashion 

\adjustbox{scale=0.85,center}{
\begin{tikzcd}[row sep=0.35em, column sep=-0.35em]
 &\ldots && 0 && 0 && 0 && 0 && \ldots &\\
   && 1 && 1 && 1 && 1 && 1 && \\
  &\ldots && p_{-2,0} && p_{-1,1} && p_{0,2} && p_{1,3} && \ldots& \\
   && p_{-3,0} && p_{-2,1} && p_{-1,2} && p_{0,3} && p_{1,4} && \ldots \\
     &\ldots && p_{-3,1} && p_{-2,2} && p_{-1,3} && p_{0,4} && p_{1,5} & & \ldots \\
     &   & \ldots && \ldots && \ldots && \ldots && \ldots && \ldots \\
   & \ldots && \ldots && \ldots && p_{-2,w-1} && p_{-1,w} && p_{0,w+1} && \ldots \\
   && 1 && 1 && 1 && 1 && 1 && \ldots \\
   &\ldots && 0 && 0 && 0 && 0 && \ldots &
\end{tikzcd}
}

such that the first and last row consist of $0$s, the second row and the penultimate row consist of $1$s and any four entries arranged in the form
$$
\begin{tabular}{ccc}
& $b$ & \\
$a$ & & $d$ \\
& $c$ &
\end{tabular}
$$
satisfy the condition $ad-bc=1$. Here $w$ is called the \emph{width} of the frieze. Such friezes were first considered by Coxeter and then studied by Conway and Coxeter in the 1970s \cite{Coxeter,CoCo1,CoCo2}. In particular, such friezes are always determined by the entries of the first nontrivial row (with entries $p_{i-1,i+1}$), the so-called \emph{quiddity row}. The entries of the quiddity row and consequently all rows of the frieze are periodic with period $w+3$ in the horizontal direction. Thus, the \emph{quiddity sequence} $\{p_{i-1,i+1}\}_{i=1}^{w+3}$ determines the frieze. Further, Conway and Coxeter showed a bijection of friezes with positive integer entries of width $w$ and triangulated polygons with $w+3$ vertices: the quiddity sequence is simply given as associating each vertex $i$ of the triangulated polygon the number $p_{i-1,i+1}$ of triangles incident to $i$ (we recall this in Section \ref{Sec:continuedfractions-friezes}). We will call friezes with positive integer entries \emph{Conway-Coxeter friezes} (or: \emph{CC-friezes} for short). \\
In the 2000s these combinatorial objects gained more interest following the introduction of cluster algebras and cluster categories. Cluster algebras were discovered by Fomin and Zelevinsky \cite{FZ2002} in the context of Lusztig's dual canonical basis and total positivity and their categorification is a very active topic of research in representation theory, see e.g.~\cite{BMRRT06, GLS-ClusterChar, JKS16}. A cluster algebra is constructed from a set of generators (so-called \emph{cluster variables} forming the initial cluster) where more cluster variables are determined recursively through a process called \emph{mutation}, which can be described in terms of matrices or quivers (the latter under some mild conditions on the cluster algebra). \\
 In particular, it was shown  that CC-friezes can be obtained by specializing all cluster variables of a given cluster of a type $A_n$ cluster algebra 
 to 1s \cite{CalderoChapoton}.  More recently, friezes have received considerable attention from the point of view of (Grassmannian) cluster categories, see e.g.~\cite{BaurIsfahan, BFGST2}, and moreover, a mutation rule for friezes has been established in \cite{BFGST18}. See also \cite{Crossroads}, which surveys several research directions involving friezes.

On the other hand, \emph{resolutions of  plane curve singularities} over $\CC$ are a classical topic in algebraic geometry, and their study dates back to Newton, for some approaches see e.g.~\cite{Cut2004, KollarResolutionBook}. The data of a resolution of a curve singularity can be encoded in a weighted graph, the so-called \emph{dual resolution graph}, whose vertices correspond to the exceptional divisors of the resolution and the weights are given by their self-intersection numbers. There are several other singularity invariants encoding data of the resolution, i.e., the Enriques diagram and the Eggers--Wall tree. Garc\'ia Barroso, Gonz\'alez P\'erez and Popescu-Pampu introduced in \cite{Lotus} the notion of a lotus of a curve singularity, which is a certain simplicial complex, with the purpose of being able to read off all of these invariants. That notion evolved from a previous notion of lotus introduced by Popescu-Pampu in \cite{PPP-cerfvolant}, in order to have a common geometric interpretation of Enriques diagrams and dual graphs associated to constellations of infinitely near points of a smooth point of a surface. A lotus can also be defined abstractly and in the present paper we will be interested in \emph{Newton lotuses} $\Lambda(\mc{E})$, which are determined by a finite set of rational numbers $\mc{E}$, see Def.~\ref{Def:lotus}. To any plane curve singularity $C$ a lotus can be associated, and a Newton lotus yields the dual resolution graph of $C$ precisely when the curve is \emph{Newton non-degenerate}, a condition on the Newton polygon of $C$, see Def.~\ref{Def:nnd}.
In particular, the dual resolution graph is part of the boundary of the lotus of the curve singularity and the negative of the weights of the vertices are given as the number of triangles in the lotus incident to the given vertex. 

This is the surprising first connection to friezes, since the quiddity sequence of a Conway--Coxeter frieze is determined in the same way! Now one is inclined to ask if there are more connections between friezes and resolutions of curve singularities: the present paper first makes the correspondence between resolutions and friezes precise and then investigates whether one can see other cluster theoretic phenomena in resolutions, such as mutation, and in particular whether the other entries of the frieze have an interpretation in terms of singularity invariants. 

Our main results are 
\begin{MainThm}[cf.~Thm.~\ref{Thm:Frieze_Coord_Lotus} for a detailed version]
	\label{MainThm:Frieze-Lotus}
Let $ \cF $ be a CC-frieze of width $ w $ with entries $ p_{i,j} $ indexed as in the frieze above. 
	Let $ P $ be the corresponding $ (w+3) $-gon with triangulation $\mc{T}$ (see Thm.~\ref{Thm:CoCo}). 
	For every element $ p_{k-1,k+1} $ of the quiddity sequence of $ P $, 
	there exists a unique embedding of $ P  $ 
	as a Newton lotus of the form $  \Lambda  = \Lambda(\mc{E})$
	into the universal lotus $ \Lambda(e_1,e_2) $ of $ \ZZ^2 $ relative to the standard basis $ (e_1, e_2) $
	such that the quiddity sequence of the resulting triangulated polygon is 
		$ ( p_{k-1,k+1}, p_{k,k+2}, \ldots, p_{k+w+1,k+w+3} )$ 
		starting from the vertex $ (0,1) $.
	\\
	In particular, the vertices of the embedded polygon are determined by the two diagonals from top left to bottom right containing $ p_{k-1,k+1} $ and $ p_{k,k+2} $ respectively.
\end{MainThm}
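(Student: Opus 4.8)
The plan is to read the vertices of the embedded polygon directly off the two frieze diagonals and to recognise them as consecutive primitive vectors on the boundary of a Newton lotus. After fixing the starting index $k$, I would define candidate lattice points $v_j := (p_{k,j},\, p_{k-1,j}) \in \ZZ^2$, where the first coordinate runs along the top-left-to-bottom-right diagonal through $p_{k,k+2}$ (fixed first index $k$) and the second along the diagonal through $p_{k-1,k+1}$ (fixed first index $k-1$). Extending the frieze into its bordering rows of $1$s and $0$s gives $p_{k,k}=0$, $p_{k-1,k}=1$ and $p_{k,k+1}=1$, so that $v_k = (0,1) = e_2$ and $v_{k+1} = (1,\, p_{k-1,k+1})$. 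This already pins the base vertex of the prospective lotus at $(0,1)$, as required, and shows that the very first quiddity entry is encoded in the neighbour of $e_2$.

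First I would establish that these two coordinate sequences satisfy one and the same three-term recurrence. On the frieze side, the unimodular rule $p_{i-1,j}\,p_{i,j+1} - p_{i,j}\,p_{i-1,j+1} = 1$ together with the standard continuant identities recalled in Section \ref{Sec:continuedfractions-friezes} yields, along each fixed-first-index diagonal, $p_{i,j+1} = a_j\, p_{i,j} - p_{i,j-1}$, where $a_j$ is the quiddity entry $p_{j-1,j+1}$ at the $j$-th vertex; hence $v_{j+1} = a_j v_j - v_{j-1}$. On the lotus side, the boundary of a Newton lotus is governed by exactly the same Hirzebruch--Jung-type recursion: consecutive primitive boundary vectors $u_{j-1}, u_j, u_{j+1}$ of the universal lotus $\Lambda(e_1,e_2)$ satisfy $u_{j+1} = a_j'\, u_j - u_{j-1}$, where $a_j'$ is the number of triangles of the universal lotus incident to $u_j$, i.e. the negative self-intersection weight. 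Matching the two recurrences then reduces the entire statement to the identification $a_j = a_j'$ of the coefficient sequences.

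The key step, and the one I expect to be the main obstacle, is to show that the lattice points $v_j$ produced by the frieze genuinely trace out the boundary of an honest Newton lotus $\Lambda(\mc{E})$, rather than a non-convex or self-overlapping path, and that its induced triangulation is $\mc{T}$. Positivity of the frieze ($a_j \ge 1$ for all $j$) forces each step of the recursion to take the \emph{mediant} direction, keeping the path inside the cone $\RR_{\ge 0}\, e_1 + \RR_{\ge 0}\, e_2$ and locally convex, while the Conway--Coxeter closing condition of Theorem \ref{Thm:CoCo} guarantees that after $w+3$ steps the path closes up to a triangulated polygon combinatorially isomorphic to $P$. One then reads off $\mc{E}$ as the set of slopes of the $v_j$ and checks against Definition \ref{Def:lotus} that the universal-lotus triangulation restricted to this region equals $\mc{T}$; since a Newton lotus is determined by its boundary slopes, this realises the image as a genuine $\Lambda(\mc{E})$. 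Because a simplicial isomorphism preserves incident-triangle counts, the quiddity sequence of $\Lambda(\mc{E})$ is then automatically $(p_{k-1,k+1}, \ldots, p_{k+w+1,k+w+3})$, which gives $a_j' = a_j$ and closes the loop with the recurrence comparison of the previous paragraph.

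Finally, uniqueness follows from the rigidity of the recursion. Fixing the base vertex at $(0,1)$ and prescribing that the quiddity sequence be read off in the stated cyclic order starting there determines $v_k = e_2$ and $v_{k+1} = (1, p_{k-1,k+1})$ with no remaining freedom, and the three-term recurrence $v_{j+1} = a_j v_j - v_{j-1}$ then determines every further $v_j$. Thus both the embedding and its vertex coordinates are forced, the latter being exactly the entries of the two diagonals through $p_{k-1,k+1}$ and $p_{k,k+2}$, which is the content of the ``in particular'' clause and of the detailed statement in Theorem \ref{Thm:Frieze_Coord_Lotus}.
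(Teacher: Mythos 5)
Your proposal is correct and follows essentially the same route as the paper's proof of Thm.~\ref{Thm:Frieze_Coord_Lotus}: both identify the vertices with the entries of the two diagonals via the three-term recurrence $v_{j+1}=a_j v_j-v_{j-1}$ (the continuant recursion $P_n=y_nP_{n-1}-P_{n-2}$ combined with \eqref{eq:frieze_entry_cont}), both use that in the universal lotus the next lateral vertex after $v_{j-1},v_j$ is $\mu v_j-v_{j-1}$ with $\mu$ the number of incident petals, and both get uniqueness from the forced initial data $v_k=(0,1)$, $v_{k+1}=(1,p_{k-1,k+1})$. The only (cosmetic) difference is direction: the paper constructs the lotus petal by petal and derives the coordinates, whereas you posit the coordinates from the frieze and verify they trace a lotus.
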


As a consequence we show that for every resolution graph of a Newton non-degenerate curve there exists a lotus such that the lateral boundary of the lotus is the resolution graph of the curve (Cor.~\ref{Cor:graph-boundary}).  This gives us a 1-1 correspondence between CC-friezes and dual resolution graphs of Newton non-degenerate curves. 
Further, we can enumerate the pairwise different resolution graphs of type $ A_n $ taking into account the self-intersection numbers of the exceptional divisors. 
More precisely, their number is equal to
$\left \lceil  \frac{C_n  }{2} \right \rceil=\left \lceil  \frac{1}{2(n+1)} {2n \choose n}  \right \rceil $ 
(Cor.~\ref{Cor:numberresgraphs}).  \\
Note here, that Popescu-Pampu also proved an embedding result for the lotus using so-called membranes in \cite{PPP-cerfvolant}, we comment on the connection to our Thm.~\ref{MainThm:Frieze-Lotus} in Remark~\ref{Bem:cerfvolant}.
Further, we want to point out another curious connection to Farey graphs: A lotus can be identified with a normalized $ m $-gon in a Farey graph \cite[Def.~2.1.5]{Farey}
	by identifying a vertex $ (a,b) $ of the lotus with the fraction $ \frac{a}{b} $.
	The identification can be deduced from \cite[Remarque~5.7]{PPP-cerfvolant} using that a Farey series \cite[Def.~2.1.2]{Farey} is a special case of a normalized $ m $-gon in a Farey graph.
	In \cite[Prop.~2.2.1]{Farey} a one-to-one correspondence analogous to our Thm.~\ref{MainThm:Frieze-Lotus} is discussed.
	Furthermore, as explained in \cite[Section~2.3]{Farey}, Coxeter's formula \cite[(5.6)]{Coxeter} provides an interpretation of entries of a frieze in terms of Farey distances,
	where the Farey distance of two 
vertices $ v_i, v_j $ of the lotus is the determinant of the $ 2 \times 2 $ matrix given by them. \\

From the construction of the frieze associated to a lotus, $w+1$ elements of the quiddity sequence are given as negatives of the self-intersection numbers of the exceptional divisors in the minimal resolution of a curve $C$.
In order to interpret other entries of the frieze associated to  $C$ in terms of resolution invariants, we make use of representation theoretic reduction techniques (see \cite{IY08}): one may reduce the frieze, i.e., cut the corresponding lotus/triangulated polygon into two smaller pieces, where one of them is a lotus of a curve singularity $C'$ yielding a partial resolution of the original $C$. This procedure can be iterated and gives an interpretation of the frieze entries corresponding to diagonals in the triangulated polygon as negatives of self-intersection numbers in a partial resolution of $C$:

\begin{MainThm}[Thm.~\ref{Thm:Partial_reduction}]
	\label{MainThm:Partial_reduction}
	Consider the curve $C=V(f)$, where $f$ is assumed to be Newton non-degenerate, and its minimal resolution $\pi$.  
	Let $\pi'$ be a partial resolution of $C$ (cf.~Def.~\ref{Def:partial_resol}) and denote by $\cF(\pi')$ the corresponding frieze. Then $\cF(\pi')$ is obtained as a reduction of the frieze of $\pi$. In particular, if the dual resolution graph $\Gamma(f)$ is of type $A_{m-2}$ with self-intersection numbers $\{-a_i\}_{i=2}^{m-1}$, then  the dual graph of the exceptional curves appearing in $\pi'$ is of type $A_k$ for some $k \leq m-2$ and the self-intersection numbers $\{-b_j\}_{j=1}^{k}$ correspond to negatives of entries in the frieze of $\pi$.
\end{MainThm}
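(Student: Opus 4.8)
The plan is to carry everything over to the lotus picture provided by Theorem~\ref{MainThm:Frieze-Lotus} and then to recognise a partial resolution as an instance of Iyama--Yoshino reduction. By Theorem~\ref{MainThm:Frieze-Lotus} (resp.~Cor.~\ref{Cor:graph-boundary}) the minimal resolution $\pi$ of $C$ is encoded by a Newton lotus $\Lambda$, equivalently by a triangulated $(w+3)$-gon $P$, whose frieze $\cF(\pi)$ has quiddity sequence recording the negatives of the self-intersection numbers of the exceptional divisors along the lateral boundary. The first step is to make precise how a partial resolution $\pi'$ (Def.~\ref{Def:partial_resol}) sits inside this data: performing only a sub-sequence of the blow-ups that build $\pi$ selects a sub-lotus $\Lambda' \subseteq \Lambda$, namely the union of the triangles created by the blow-ups actually carried out. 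I would verify that $\Lambda'$ is again a Newton lotus, the lotus of the partial-resolution data, so that its lateral boundary is the dual graph of the exceptional curves in $\pi'$, its boundary quiddity entries are the negatives of the self-intersection numbers in $\pi'$, and $\cF(\pi')$ is by definition the frieze of the triangulated sub-polygon $P'$ obtained from $\Lambda'$.

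The heart of the argument is to identify the passage $\Lambda \rightsquigarrow \Lambda'$ with a reduction of the frieze in the sense of \cite{IY08}. Each blow-up of $\pi$ omitted in $\pi'$ corresponds to a triangle of $P$ that is cut away; I would show that cutting $P$ along the diagonals bounding the omitted triangles realises the Iyama--Yoshino reduction on the level of the associated cluster category, and hence of friezes, with $P'$ being one of the two resulting pieces (exactly as indicated in the introduction). The edges of $P'$ that were interior diagonals of $P$ are the diagonals singled out in Theorem~\ref{MainThm:Frieze-Lotus} (those carrying the entries $p_{k-1,k+1}$ and $p_{k,k+2}$), so $\cF(\pi')$ is produced from $\cF(\pi)$ by the frieze reduction rule. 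This proves the first assertion, and iterating the cut yields the announced iterability of the procedure.

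For the ``in particular'' statement I would specialise to $\Gamma(f)$ of type $A_{m-2}$, where $P$ is an $(m+1)$-gon and the lateral boundary carries the quiddity entries $a_i$. A partial resolution then removes some of the triangles adjacent to the lateral boundary, leaving a triangulated sub-polygon $P'$ whose lateral boundary is again a path, i.e.~the dual resolution graph of $\pi'$ is of type $A_k$ with $k \le m-2$. The self-intersection numbers $-b_j$ of $\pi'$ are the negatives of the quiddity entries of $\cF(\pi')$, and by the continuant / Farey-distance description of frieze entries recalled in Section~\ref{Sec:continuedfractions-friezes} each such quiddity entry is the determinant of the two lotus vertices neighbouring the corresponding boundary vertex of $P'$. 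Since the vertices of $P'$ are vertices of $P$, this determinant is exactly an entry $p_{i,j}$ of $\cF(\pi)$, so each $b_j$ is the negative of an entry of $\cF(\pi)$, as claimed.

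The step I expect to be the main obstacle is the middle one: rigorously matching the geometric operation of performing only part of the blow-up sequence with the algebraic Iyama--Yoshino reduction. One must check that the sub-lotus $\Lambda'$ is genuinely a (Newton) lotus of a curve singularity $C'$, that the chosen cutting diagonals are compatible with the triangulation so that the reduction is well defined and independent of auxiliary choices, and that the frieze reduction rule coming from \cite{IY08} agrees term by term with the combinatorics of cutting the polygon. Establishing this dictionary precisely --- rather than the type~$A$ bookkeeping, which is then routine --- is where the real work lies.
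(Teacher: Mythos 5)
Your proposal follows essentially the same route as the paper: encode $\pi$ as a Newton lotus / triangulated polygon, identify a partial resolution with a sublotus, realise the cut along a triangulation diagonal as an Iyama--Yoshino reduction of the cluster category and hence of the frieze, and read off the new quiddity. Two remarks. First, the step you single out as the main obstacle --- matching the omitted blow-ups with a genuine sublotus and with the categorical reduction --- is largely dissolved by the paper's set-up: Def.~\ref{Def:partial_resol}(2) \emph{defines} a partial resolution by the condition that $\Lambda(\pi')\subset\Lambda(f)$ is a sublotus (so there is nothing to verify there), and the dictionary between cutting the polygon along a diagonal of $\mc{T}$ and reduction of the frieze is exactly the content of Lemma~\ref{Lem:Friezered} and Prop.~\ref{Prop:generalreduction}, which the paper's proof simply invokes; the proof of the theorem itself is then short. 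Second, for the identification of the new quiddity entries with entries of $\cF(\pi)$ you use the determinant/Farey-distance description of frieze entries via the lotus coordinates of Thm.~\ref{Thm:Frieze_Coord_Lotus}, whereas the paper reads them off directly from the explicit formulas $(a_1,\ldots,a_{i-1},p_{i-1,j},p_{i,j+1},a_{j+1},\ldots,a_m)$ of Prop.~\ref{Prop:generalreduction}; both work, but the latter requires no Pl\"ucker/Ptolemy manipulation. A small slip: if $\Gamma(f)$ is of type $A_{m-2}$ there are $m-2$ exceptional components, so by Thm.~\ref{Thm:Lotus-dual} the polygon $P$ is an $m$-gon (frieze width $m-3$), not an $(m+1)$-gon.
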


Finally, we determine a mutation rule for lotuses and also explain what this means for the resolution process: 
\begin{MainThm}[Thm.~\ref{Thm:Mutation}]
	\label{MainThm:Mutation}
	Let $\mc{T}$ be a triangulation of a polygon $ P $ and
	let $ \Lambda = \Lambda (P)$ be an embedding into the universal lotus with respect to a chosen basis $ (e_1, e_2) $.
	Fix an inner diagonal $ [a, b] $ and denote by $ \mu_{[a,b]} (\mc{T}) $ the mutation of the diagonal $  [a, b]  $ in $ \mc{T} $.
	There exists a well-defined lotus $ \mu_{[a,b]} (\Lambda ) $  associated to $ \mu_{[a,b]} (\mc{T}) $
	which can be explicitly described, see  Notation~\ref{Not:Not_mut} and Thm.~\ref{Thm:Mutation}.
\end{MainThm}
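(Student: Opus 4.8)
The plan is to reduce the existence and well-definedness of $\mu_{[a,b]}(\Lambda)$ to the coordinatization already established in Theorem~\ref{MainThm:Frieze-Lotus} (in the detailed form of Thm.~\ref{Thm:Frieze_Coord_Lotus}), and then to extract an explicit local recipe for the change of coordinates. Recall that an embedded lotus $\Lambda=\Lambda(P)$ assigns to every vertex of $P$ a primitive vector in $\ZZ^2$ so that each edge of $\mc{T}$ is a unimodular pair and every triangle is a basic Farey triangle $\{u,v,u+v\}$ whose apex $u+v$ is the Farey mediant of its base $[u,v]$. Since $\mu_{[a,b]}(\mc{T})$ is again a triangulation of the same polygon with marked starting vertex $(0,1)$, Thm.~\ref{Thm:Frieze_Coord_Lotus} immediately produces a \emph{unique} embedded lotus realizing it; defining $\mu_{[a,b]}(\Lambda)$ to be this embedding settles existence and well-definedness. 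The genuine content is the explicit description, and I would stress at the outset that the coordinates really do move: redrawing the flipped diagonal with the old coordinates generically yields a degenerate or non-basic triangle (e.g.\ flipping the diagonal of the $4$-gon $\{(0,1),(1,1),(2,1),(1,0)\}$ forces the collinear triple $(0,1),(1,1),(2,1)$), so a recoordinatization is unavoidable.

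First I would localize the flip. Root the dual tree of $\mc{T}$ at the base edge used in the embedding, so every triangle acquires a distinguished base and apex. The diagonal $[a,b]$ is the common edge of two triangles forming a quadrilateral with cyclic vertices $a,p,b,q$; with respect to the rooting, one of them, say $T_q$, has $[a,b]$ as its base and hence apex $q=a+b$, while for the other, $T_p$, the edge $[a,b]$ is lateral, so that (say) $b$ is the apex of $T_p$ over base $[a,p]$, giving $b=a+p$, i.e.\ $p=b-a$. Using the normalization $\det(a,b)=\pm1$ one checks at once that the four quadrilateral edges $[a,p],[p,b],[b,q],[q,a]$ are unimodular, as required for a lotus.

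Next I would produce the new coordinates by re-propagating the mediant rule through the flipped configuration. Everything lying strictly on the root side of $[a,b]$ (in particular the base pair $\{a,p\}$ of $T_p$ together with the subcomplex it anchors) is kept fixed; the two reconfigured triangles of $\mu_{[a,b]}(\mc{T})$ then acquire new apices by the Farey sum, namely the new value of $q$ becomes the mediant $a+p$ of the retained base $[a,p]$, and the new value of $b$ becomes the mediant $p+(a+p)$ of the new diagonal $[p,q]$; the subtrees hanging off the lateral edges $[a,q],[b,q],[p,b]$ are carried along by the explicit unimodular change dictated by how their anchoring edge has moved. On Farey distances $d(\cdot,\cdot)=|\det(\cdot,\cdot)|$ the underlying Pl\"ucker identity $\det(a,b)\det(p,q)=\det(a,p)\det(b,q)-\det(a,q)\det(b,p)$ becomes the Ptolemy relation
\[
d(a,b)\,d(p,q)=d(a,p)\,d(b,q)+d(a,q)\,d(b,p),
\]
so the entry attached to the flipped diagonal transforms exactly by the frieze mutation rule of \cite{BFGST18} and by Coxeter's distance formula \cite{Coxeter}. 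Comparing the resulting triangle counts with the combinatorial flip then identifies the output as the Newton lotus $\Lambda(\mc{E}')$ for an explicitly computed finite set $\mc{E}'$ (cf.\ Def.~\ref{Def:lotus}), in the spirit of the reduction techniques of \cite{IY08}.

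The main obstacle I anticipate is the global consistency of this local recipe: one must verify that re-propagating through the quadrilateral and into the three attached subtrees preserves the basic-triangle condition everywhere away from the flip, that the normalization anchoring the start at $(0,1)$ is maintained (which may require pinning down a unique compensating element of $\SL_2(\ZZ)$), and that the outcome is independent of the auxiliary rooting chosen for the dual tree. A secondary difficulty is the case analysis forced by the distinguished base vertices of the lotus: when $a$ or $b$ spans the root edge, the apex/base bookkeeping in the quadrilateral degenerates and must be handled separately. Once these points are dispatched, the uniqueness in Thm.~\ref{Thm:Frieze_Coord_Lotus} guarantees that the explicitly described lotus coincides with the unique embedding of $\mu_{[a,b]}(\mc{T})$, which completes the argument.
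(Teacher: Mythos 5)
Your proposal is correct and takes essentially the same route as the paper: you localize the flip at the quadrilateral, use the rooting by the base petal to identify the parent petal $\delta(a,c)$ with apex $b=a+c$ and the child petal $\delta(a,b)$ with apex $d=a+b$ (your $p,q$ are the paper's $c,d$), and replace them by the petal on $[a,c]$ with apex $a+c$ and the petal on the new diagonal with apex $c+(a+c)$, carrying the attached subcomplexes along unimodularly --- which is exactly the content of Notation~\ref{Not:Not_mut} and Thm.~\ref{Thm:Mutation}. The only cosmetic difference is that you anchor existence and well-definedness in the uniqueness statement of Thm.~\ref{Thm:Frieze_Coord_Lotus}, whereas the paper defines $\mu_{[a,b]}(\Lambda)$ directly by this local replacement rule.
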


We believe that there are many more connections between friezes and singularities to be explored: for example, one can associate to any \emph{continued fraction} a triangulated polygon (see e.g., \cite{FareyBoat} for a nice exposition) and hence a CC-frieze. We make the connection to lotuses and friezes precise in Example \ref{Ex:lotus-continuedfraction} and Remark \ref{Rmk:Riemenschneider}. \\
Continued fractions determine cyclic quotient surfaces and were studied in great detail, see e.g. \cite{Riemenschneider74, Stevens, BrohmeDiss}. 
In these papers versal deformations of cyclic quotient surfaces were studied as well as their minimal resolution.

The paper is organized as follows: in Section \ref{Sec:continuedfractions-friezes} we recall the basic properties of Conway--Coxeter friezes, triangulated polygons, continued fractions and how to connect them, in particular, Kidoh's lemma, which relates dual continued fractions to different parts of the quiddity sequences of a frieze. Section \ref{Sec:lotus} recalls the necessary notions from the theory of complex curve singularities and toric geometry to define the lotus of a curve singularity. We also comment on how to relate a lotus to a continued fraction and its corresponding triangulated polygon (cf.~Example \ref{Ex:lotus-continuedfraction}). Our main Thm.~\ref{MainThm:Frieze-Lotus}  and Corollary about the number of dual resolution graphs of type $A_n$ are stated and proven in Section \ref{Sec:friezes-graphs}. In order to interpret the entries of a frieze in terms of the resolution of the corresponding curve singularity we change gears and venture in a more categorical territory: in Section~\ref{Sub:categoryA} we give a short introduction to cluster categories of type $A$ and the notion of reduction of a frieze in Section \ref{Sub:reduction}, where we show how the quiddity sequence changes under reduction of a frieze. Thm.~\ref{Thm:Partial_reduction} connecting partial resolutions graphs of a curve singularity and some entries of the corresponding frieze is then shown. Finally, in Section \ref{Sec:mutation} we show how to interpret mutation of friezes/triangulated polygons in terms of lotuses and associated curve singularities. We end with some questions and potential generalizations of lotuses.

{\em Acknowledgements:}
We want to express our gratitude to the Mathematisches Forschungs\-institut Oberwolfach for the inspiring environment and perfect working conditions during a stay as Oberwolfach Research Fellows in 2022. We also thank Patrick Popescu-Pampu for interesting discussions that stirred our interest in the topic of lotuses, as well as Ian Short for helpful comments. 

This work was supported by the Engineering and Physical Sciences Research Council [grant number EP/W007509].
This material is based upon work supported by the National Science Foundation under Grant No.~DMS-1928930 and by the Alfred P. Sloan Foundation under grant G-2021-16778, while E.F.~was in residence at the Simons Laufer Mathematical Sciences Institute (formerly MSRI) in Berkeley, California, during the Spring 2024 semester.

\section{Useful facts about continued fractions and friezes} \label{Sec:continuedfractions-friezes}

We begin by recalling the basic notions of our article, (Hirzebruch--Jung) continued fractions and 
Conway--Coxeter friezes. 
Along this, we recall well-known results such as the connection of each of these notions to triangulated polygons.
For more detailed references, we refer to \cite{Perron,Patrick-continued-fractions} about continued fractions, \cite{Crossroads} and references therein for friezes, and \cite{FareyBoat, CanakciSchiffler} about connections of continued fractions and cluster algebras.

\subsection{Continued fractions and triangulations of polygons} Let $\lambda=\frac{n}{q} \in \QQ_+ $, with $n>q$ and $(n,q)=1$ without loss of generality. Then $\lambda$ can be written as
$$\lambda=b_1 - \frac{1}{b_2- \frac{1}{\ldots - \frac{1}{b_r}}} \ , b_i \geq 2 \ .$$
This expansion is called \emph{(Hirzebruch--Jung) continued fraction}, and we will denote it by $\lambda=\lb b_1, \ldots, b_r \rb$. Note that any $\lambda \in \QQ_{>1}$ has a unique Hirzebruch--Jung continued fraction expansion (see \cite[Section 2]{Patrick-continued-fractions}, or \cite{Perron}
for a proof for positive continued fractions). If we have $ n \leq q $, the Hirzebruch-Jung continued fraction for $\frac{n}{q} \leq 1$ is defined analogously with the difference that we have $b_i \geq 1$.

\begin{Bem} In the literature Hirzebruch--Jung continued fractions are sometimes called negative continued fractions. Often only positive continued fraction expansions  are considered (and sometimes dubbed \emph{Euclidean continued fractions}, see \cite[Section 2]{Patrick-continued-fractions}). But here we will deal exclusively with Hirzebruch--Jung continued fractions, and since there will not be any danger of confusion, we will just speak of $\lambda=\lb b_1, \ldots, b_r \rb$ as the continued fraction expansion of $\lambda$.
\end{Bem}

\begin{defi} \label{def:polygon}
A \emph{polygon} consists of a finite set 
 $V$ of $m \geq 3$ vertices with a cyclic order. 
We may treat $P$ realized as convex $m$-gon in the Euclidean plane. If $a \neq b$ are vertices of $P$, then there is a diagonal, which we denote by $[a,b]$. 
A \emph{triangulation} $\mc{T}$ of $P$ is a maximal set of pairwise non-crossing diagonals between non-neighboring vertices (we sometimes call these \emph{inner diagonals}). 
\end{defi}

 Consider a polygon $P$ with $m$ vertices. There are $\frac{1}{m-1} { 2m-4 \choose m-2 }$ different triangulations of $P$ (see e.g. \cite[Exercise 6.19]{Stanley2}), i.e., the $(m-2)$-nd Catalan number, and each triangulation consists of $(m-2)$-triangles and determines $m-3$ inner diagonals. 
\\
Fix a triangulation of $P$. We number the vertices of $P$ as 
$ v_1, \ldots, v_m $
and define the \emph{quiddity sequence of the polygon} 
$\{\alpha_i\}_{i=1}^{m}$
via: $\alpha_i$ is the number of triangles incident to $v_i$. We will sometimes consider the indices modulo $m$, i.e., $\alpha_{m}=\alpha_0$ etc. If $\alpha_i=1$, then we call the corresponding vertex $v_i$ an \emph{ear} in the triangulation. Further note that $m-2\geq \alpha_i \geq 1$ and each triangulation has at least two ears 
(for a proof see e.g. \cite[Proof of Lemma 11]{HenrySoizic} or \cite{CoCo2}). 
In the following we will quickly explain the bijection between triangulations with exactly two ears and continued fractions. For more on this topic and matrices related to continued fractions, see \cite{FareyBoat}.

\begin{lem-def} \label{Lem:quiddity}
Let $\lambda=\lb b_1, \ldots, b_r \rb > 1  $ be a rational number and $m:=\sum_{i=1}^rb_i-r+3$. 
There exists a triangulation of an $m$-gon with exactly two ears such that
the sequence 
$\{b_i\}_{i=1}^r $  is part of the corresponding quiddity sequence $\{\alpha_i\}_{i=0}^{m-1}$:
\[ 
	\alpha_0=1 \ , \ \ 
	\alpha_i=b_i \ , \ \ \mbox{ for } i \in \{ 1, \ldots, r \} \ , \ \  \alpha_{r+1}=1 \ , 
\] 
and $ \alpha_i > 1 $ for $ i \in \{ r+2, \ldots, m-1 \} $. 
The remaining $\alpha_i$, for $ i \in \{ r+2, \ldots, m-1 \} $, can be uniquely determined (cf.~Prop.~\ref{Prop:Kidoh-duality}).
\\
We call this triangulation the \emph{triangulation associated to $\lambda$}.
\end{lem-def}

\begin{proof} The formula for $m$ can easily be deduced from the interpretation of the $b_i$ as part of a quiddity sequence,  see e.g.~\cite{FareyBoat}. The remaining $\alpha_i$ can be computed by drawing the triangulation of the polygon, cf.~Prop.~\ref{Prop:Kidoh-duality} and Fig.~\ref{Fig:Kidoh-duality}.
\end{proof}

Note that with Lemma-Definition~\ref{Lem:quiddity} we have characterized triangulations of an $m$-gon with exactly two ears as triangulations associated to continued fractions.

Note that there is a duality of the Hirzebruch--Jung-expansion for $\lambda=\frac{n}{q}$ and $\frac{\lambda}{\lambda-1}=\frac{n}{n-q}$  due to Kidoh. 
We will explicitly show  how to read off the quiddity sequence of the triangulation of the $m$-gon corresponding to $\frac{n}{q}$ using Kidoh's Lemma \cite[Prop.~1.2]{Kidoh}:

\begin{proposition}[Duality and quiddity] \label{Prop:Kidoh-duality}
Let $\frac{n}{q}=\lb b_1, \ldots, b_r \rb$, $\frac{n}{n-q}=\lb b'_1, \ldots, b'_s \rb$ 
be such that $n > q >0$. 
Set $m:=\sum_{i=1}^r b_i-r+3$. Then $s=m-r-2$ and there are positive integers $c_i$ and $d_i$, 
$ i \in \{ 1, \ldots, \kappa \} $
, such that:
\begin{align}
\label{Eq:nq} \frac{n}{q}  &= \lb d_1 +1 \ , \underbrace{2, \ldots, 2}_{c_1-1} \ ,d_2+2, \ldots, d_{\kappa-1}+2 \ , \underbrace{2, \ldots, 2}_{c_{\kappa-1}-1} \ ,d_\kappa+2\ , \underbrace{2, \ldots, 2}_{c_\kappa-1} \rb \\
\label{Eq:n-q} \frac{n}{n-q} & = \lb \underbrace{2, \ldots, 2}_{d_1-1} \ , c_1+2 \ , \underbrace{2, \ldots, 2}_{d_2-1} \ , c_2 +2, \ldots, c_{\kappa-1}+2 \ , \underbrace{2, \ldots, 2}_{d_{\kappa}-1} \ , c_{\kappa}+1 \rb
\end{align}
Further, the quiddity sequence of the $m$-gon with triangulation coming from $\frac{n}{q}$ is
\[
	\alpha_0=1, \quad \alpha_i=b_i, \ \mbox{ for } i \in \{ 1, \ldots, r \}, \quad \alpha_{r+1}=1, \quad \alpha_i=b'_{s+r+2-i}, \ 
	\mbox{ for } i \in \{r+2, \ldots, m-1\} \ . 
\]
\end{proposition}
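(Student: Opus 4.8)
The statement bundles three assertions: the block decompositions \eqref{Eq:nq}--\eqref{Eq:n-q}, the length identity $s = m-r-2$, and the formula for the ``second half'' $\alpha_{r+2}, \dots, \alpha_{m-1}$ of the quiddity sequence. The plan is to treat \eqref{Eq:nq}--\eqref{Eq:n-q} as the continued-fraction duality of Kidoh, to extract $s = m-r-2$ by a bookkeeping count inside these block forms, and then to read the remaining $\alpha_i$ off the two-eared triangulation by reflecting it and invoking Kidoh's duality once more.

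First I would record the block forms together with the length count. The existence of positive integers $c_1, \dots, c_\kappa$ and $d_1, \dots, d_\kappa$ making \eqref{Eq:nq} and \eqref{Eq:n-q} hold is exactly Kidoh's duality \cite[Prop.~1.2]{Kidoh}: writing $\frac{n}{q}$ as an alternating concatenation of ``corner'' entries and maximal runs of $2$'s, the dual fraction $\frac{n}{n-q}$ is obtained by interchanging the roles of run-lengths and corner-excesses, which is precisely the passage from \eqref{Eq:nq} to \eqref{Eq:n-q}. (Equivalently one may reprove this via the Riemenschneider staircase, placing $b_i-1$ dots in row $i$ with consecutive rows overlapping in one column and reading the transpose.) Once the block forms are in hand the length identity is purely arithmetic: counting entries in \eqref{Eq:nq} gives $r = \sum_{j=1}^\kappa c_j$ and summing them gives $\sum_{i=1}^r b_i = 2r + s - 1$, while counting entries in \eqref{Eq:n-q} gives $s = \sum_{j=1}^\kappa d_j$. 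Substituting into $m = \sum_i b_i - r + 3$ yields $m = r + s + 2$, that is, $s = m - r - 2$; in particular the index $s+r+2-i$ ranges over $\{1,\dots,s\}$ as $i$ ranges over $\{r+2,\dots,m-1\}$, so the final formula is well-typed.

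Finally the quiddity formula. By Lemma-Definition~\ref{Lem:quiddity} the triangulation associated to $\frac{n}{q}$ is the unique two-eared triangulation of the $m$-gon whose ears sit at $v_0$ and $v_{r+1}$ and whose quiddity opens with $1, b_1, \dots, b_r, 1$; the two ears cut the boundary into two arcs, the first carrying $b_1, \dots, b_r$ and the second the sought entries $\alpha_{r+2}, \dots, \alpha_{m-1}$. Reflecting the polygon reverses the cyclic quiddity and exchanges the two arcs, so that rerooting at the ear $v_{r+1}$ presents a two-eared triangulation whose interior arc reads $\alpha_{m-1}, \dots, \alpha_{r+2}$. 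The key point is that this reflected triangulation is the triangulation associated to the dual fraction $\frac{n}{n-q}$ --- the geometric content of Kidoh's lemma, that the two continued fractions obtained from the two ears of a two-eared triangulation are Kidoh-dual. Its interior arc therefore reads $b'_1, \dots, b'_s$, and comparing the two readings gives $\alpha_{r+1+k} = b'_{s+1-k}$ for $k = 1, \dots, s$, i.e.\ $\alpha_i = b'_{s+r+2-i}$ for $i \in \{r+2, \dots, m-1\}$, as claimed.

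The main obstacle is the identification used in the last paragraph, namely that reflecting the triangulation of $\frac{n}{q}$ produces the triangulation of $\frac{n}{n-q}$. I would establish this either by unwinding the geometric meaning of Kidoh's lemma directly on Figure~\ref{Fig:Kidoh-duality}, or by induction on the number $\kappa$ of blocks, peeling off one maximal fan of $2$'s at a time and checking that this elementary move is compatible with the passage \eqref{Eq:nq} $\to$ \eqref{Eq:n-q}. In either approach the delicate part is tracking the order reversal (the shift $k \mapsto s+1-k$), for which the run-length encoding by the $c_j$ and $d_j$ is exactly the bookkeeping device that keeps the indices aligned.
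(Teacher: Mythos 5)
Your proposal is correct and follows essentially the same route as the paper: cite Kidoh's lemma \cite[Prop.~1.2]{Kidoh} for the block decompositions, count entries to get $s=m-r-2$, and read the two halves of the quiddity sequence off the two arcs of the two-eared triangulation (the content of Fig.~\ref{Fig:Kidoh-duality}). You are more explicit than the paper on the arithmetic for $s$ and on the index reversal $\alpha_i = b'_{s+r+2-i}$, where the paper simply points to the figure, but the underlying argument is the same.
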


\begin{proof}
The continued fraction expansions of $\frac{n}{q}$ and $\frac{n}{n-q}$ are calculated in \cite[Prop.~1.2]{Kidoh}. The formula for $s$ follows directly from Lemma-Definition~\ref{Lem:quiddity} (and can best seen by a sketch, see Fig.~\ref{Fig:Kidoh-duality}). 
Assume now that $\frac{n}{q}$ is given by expression \eqref{Eq:nq}. Then we have $b_1=d_1+1, b_2=2, \ldots$ and so on. 
We can see this sequence on the corresponding triangulated $m$-gon in Fig.~\ref{Fig:Kidoh-duality}. 
\begin{figure}[h!] 
\begin{tikzpicture}[scale=0.5, baseline=(current  bounding  box.center) ]
\coordinate (u0) at (1,2);
\coordinate (u1) at (2,0);
\coordinate (u2) at (4,0);
\coordinate (u3) at (6,0);
\coordinate (u4) at (8,0);
\coordinate (u5) at (10,0);
\coordinate (u6) at (13,0);
\coordinate (u7) at (17,0);
\coordinate (u8) at (20,0);
\coordinate (u9) at (22,0);
\coordinate (u10) at (24,0);
\coordinate (u11) at (26,2);

\coordinate (o1) at (3,4);
\coordinate (o2) at (6,4);
\coordinate (o3) at (8,4);
\coordinate (o4) at (11,4);
\coordinate (o5) at (18,4);
\coordinate (o6) at (21,4);
\coordinate (o7) at (23,4);
\coordinate (o8) at (25,4);

\draw (u0) node[left] {$1$};
\draw (u1) node[below] {$2$};
\draw (u2) node[below] {$2$};
\draw (u3) node[below] {$c_1+2$};
\draw (u4) node[below] {$2$};
\draw (u5) node[below] {$2$};
\draw (u6) node[below] {$c_2+2$};
\draw (u7) node[below] {$c_{\kappa-1}+2$};
\draw (u8) node[below] {$2$};
\draw (u9) node[below] {$2$};
\draw (u10) node[below] {$c_{\kappa}+1$};
\draw (u11) node[right] {$1$};

\draw (o1) node[above] {$d_1+1$};
\draw (o2) node[above] {$2$};
\draw (o3) node[above] {$2$};
\draw (o4) node[above] {$d_2+2$};
\draw (o5) node[above] {$d_{\kappa}+2$};
\draw (o6) node[above] {$2$};
\draw (o7) node[above] {$2$};
\draw (o8) node[above] {$2$};

\draw [decoration={brace, raise=3ex}, decorate] (o2) -- (o3) node[above=3ex,pos=0.1ex] {$c_1-1$};
\draw [decoration={brace, raise=3ex}, decorate] (o6) -- (o8) node[above=3ex,pos=0.1ex] {$c_{\kappa}-1$};

\draw [decoration={brace, mirror, raise=3ex}, decorate] (u1) -- (u2) node[below=3ex,pos=0.1ex] {$d_1-1$};
\draw [decoration={brace, mirror, raise=3ex}, decorate] (u4) -- (u5) node[below=3ex,pos=0.1ex] {$d_2-1$};
\draw [decoration={brace, mirror, raise=3ex}, decorate] (u8) -- (u9) node[below=3ex,pos=0.1ex] {$d_{\kappa}-1$};

\filldraw[draw=black, fill=blue!20, densely dotted] 
     (u1)--(o1)--(u2)--(u1);
\draw [fill=blue!40] (u0)--(u1)--(o1)--(u0);
\draw [fill=blue!40] (o1)--(u2)--(u3)--(o1);
\filldraw[draw=black, fill=blue!20, densely dotted] 
     (u4)--(o4)--(u5)--(u4);
\draw [fill=blue!40] (u3)--(u4)--(o4)--(u3);
\draw [fill=blue!40] (u5)--(u6)--(o4)--(u5);
\filldraw[draw=black, fill=blue!20, densely dotted] 
     (u8)--(o5)--(u9)--(u8);
\draw [fill=blue!40] (u7)--(u8)--(o5)--(u7);
\draw [fill=blue!40] (u9)--(u10)--(o5)--(u9);
\draw [fill=blue!40] (u5)--(u6)--(o4)--(u5);
\draw [fill=blue!40] (u10)--(u11)--(o8)--(u10);

\filldraw[draw=black, fill=orange!20, densely dotted] 
     (o2)--(o3)--(u3)--(o2);
\draw [fill=orange!40] (o1)--(u3)--(o2)--(o1);
\draw [fill=orange!40] (o3)--(u3)--(o4)--(o3);
\draw [fill=orange!40] (o1)--(u3)--(o2)--(o1);

\filldraw[draw=black, fill=orange!20, densely dotted] 
     (o6)--(u10)--(o7)--(o6);
\draw [fill=orange!40] (o5)--(u10)--(o6)--(o5);
\draw [fill=orange!40] (o8)--(u10)--(o7)--(o8);

\draw (15,1.5) node[above] {$\cdots$};

\filldraw[draw=black, fill=orange!30, dotted] 
     (o4)--(u6)--(u7)--(o5)--(o4);
\end{tikzpicture}
\caption{Triangulation associated to a continued fraction $\frac{n}{q}$ and $\frac{n}{n-q}$.}
\label{Fig:Kidoh-duality}
\end{figure}
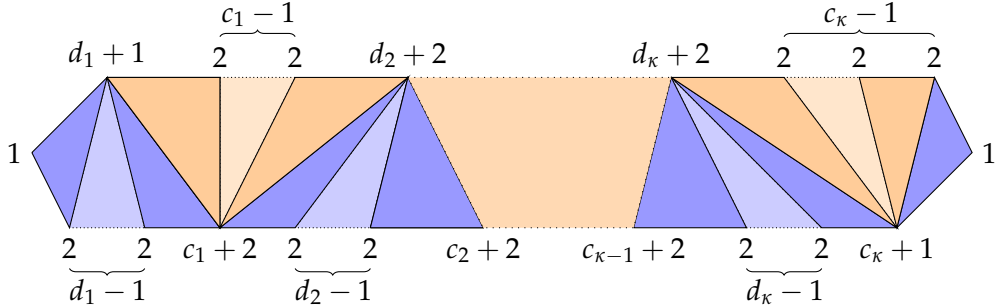
\end{proof}

Observe in Fig.~\ref{Fig:Kidoh-duality} that the ``upper" part of the triangulation provides the continued fraction for $ \frac{n}{q} $,
	while the ```lower" part corresponds to the continued fraction of $ \frac{n}{n-q} $.

\begin{example} \label{Ex:fountain}
Let $\frac{n}{q}=\lb n \rb$ with $n \geq 2$, i.e., $q=1$. We calculate the continued fraction expansion of $\frac{n}{n-q}=\frac{n}{n-1}$ with Prop.~\ref{Prop:Kidoh-duality}. We have $r=1$, $b_1=n$, and $m=n-1+3=n+2$. Further, the length of the continued fraction expansion of $\frac{n}{n-1}$ is $s=m-r-2=(n+2)-1-2=n-1$. We first calculate the integers $c_i$ and $d_i$ from 
Prop.~\ref{Prop:Kidoh-duality} from $\lb n \rb$, that is, $ \kappa =1$, and $d_1=n-1$, and $c_1=1$. Then the continued fraction expansion for $\frac{n}{n-1}$ is
$$\frac{n}{n-1}=\lb \underbrace{2, \ldots, 2}_{(n-1)-1} \ , 1+1 \rb=\lb \underbrace{2, \ldots, 2}_{n-1} \rb \ .$$
The triangulation of an $(n+2)$-gon corresponding to $\lb n \rb$ is illustrated for $ n = 6 $ in Fig.~\ref{Fig:fontain}. \\
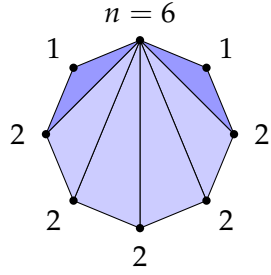
\begin{figure}[h!]  
    \centering
    \begin{tikzpicture}[scale=2.5,cap=round,>=latex]

    \node  at (90:0.65) {$n=6$};
        \node   at (45:0.65) {$1$};
    \node  at (0:0.65) {$2$};
        \node at (315:0.65) {$2$};
    \node at (270:0.65) {$2$};
        \node at (225:0.65) {$2$};
            \node at (180:0.65) {$2$};
         \node at (135:0.65) {$1$};
         
    \coordinate  (p1) at (90:0.5) {}; 
        \coordinate  (p2) at (45:0.5){};
   \coordinate (p3) at (0:0.5){};
    \coordinate (p4) at (315:0.5){};
   \coordinate (p5) at (270:0.5){}; 
    \coordinate (p6) at (225:0.5){}; 
   \coordinate (p7) at (180:0.5){};  
   \coordinate (p8) at (135:0.5){};

 \draw[fill=blue!40] (p1)--(p2)--(p3)--(p1); 
  \draw[fill=blue!20] (p1)--(p3)--(p4)--(p1);
    \draw[fill=blue!20] (p1)--(p4)--(p5)--(p1);                           
  \draw[fill=blue!20] (p1)--(p5)--(p6)--(p1); 
  \draw[fill=blue!20] (p1)--(p6)--(p7)--(p1);  
  \draw[fill=blue!40] (p1)--(p7)--(p8)--(p1);                             
  
	\draw (90:0.5) node[fill=black,circle,inner sep=0.039cm] {} circle (0.01cm);	        
     \draw (45:0.5) node[fill=black,circle,inner sep=0.039cm] {} circle (0.01cm);
     \draw (0:0.5) node[fill=black,circle,inner sep=0.039cm] {} circle (0.01cm);
     \draw (315:0.5) node[fill=black,circle,inner sep=0.039cm] {} circle (0.01cm);
     \draw (270:0.5) node[fill=black,circle,inner sep=0.039cm] {} circle (0.01cm);
     \draw (225:0.5) node[fill=black,circle,inner sep=0.039cm] {} circle (0.01cm);
     \draw (180:0.5) node[fill=black,circle,inner sep=0.039cm] {} circle (0.01cm);
     \draw (135:0.5) node[fill=black,circle,inner sep=0.039cm] {} circle (0.01cm);

    \end{tikzpicture}
    \caption{The fan triangulation associated to the continued fraction $n$ and its dual $\frac{n}{n-1}$.}
\label{Fig:fontain}
\end{figure}
\end{example}

\begin{example} \label{Ex:running}
Let $\frac{n}{q}=\frac{11}{8}=\lb 2,2,3,2 \rb$. Then $\frac{n}{n-q}=\frac{11}{3}=\lb 4,3 \rb$. We have $r=4$, $m=8$, and $s=2$. For the integers $c_i$ and $d_i$ from Prop.~\ref{Prop:Kidoh-duality}, one has $c_1=2, d_1=1, c_2=2, d_2=1$. Note that $ \kappa=2$. 
The corresponding triangulation of the $8$-gon is shown in Fig.~\ref{Fig:running}.
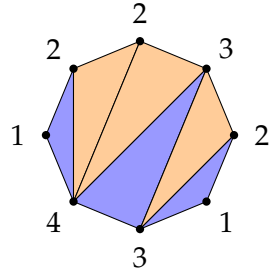
\begin{figure}[h!]
    \centering
    \begin{tikzpicture}[scale=2.5,cap=round,>=latex]

    \node at (90:0.65) {$2$};
        \node at (45:0.65) {$3$};
    \node at (0:0.65) {$2$};
        \node at (315:0.65) {$1$};
    \node at (270:0.65) {$3$};
        \node at (225:0.65) {$4$};
            \node at (180:0.65) {$1$};
         \node at (135:0.65) {$2$};
         
    \coordinate  (p1) at (90:0.5) {}; 
        \coordinate  (p2) at (45:0.5){};
   \coordinate (p3) at (0:0.5){};
    \coordinate (p4) at (315:0.5){};
   \coordinate (p5) at (270:0.5){}; 
    \coordinate (p6) at (225:0.5){}; 
   \coordinate (p7) at (180:0.5){};  
   \coordinate (p8) at (135:0.5){};      
    
   \draw[fill=blue!40] (p7)--(p8)--(p6)--(p7);       
      \draw[fill=orange!40] (p6)--(p8)--(p1)--(p6);                                 
      \draw[fill=orange!40] (p6)--(p1)--(p2)--(p6);
   \draw[fill=blue!40] (p6)--(p2)--(p5)--(p6); 
      \draw[fill=orange!40] (p5)--(p2)--(p3)--(p5);
   \draw[fill=blue!40] (p5)--(p3)--(p4)--(p5);                                               
	\draw (90:0.5) node[fill=black,circle,inner sep=0.039cm] {} circle (0.01cm);	        
     \draw (45:0.5) node[fill=black,circle,inner sep=0.039cm] {} circle (0.01cm);
     \draw (0:0.5) node[fill=black,circle,inner sep=0.039cm] {} circle (0.01cm);
     \draw (315:0.5) node[fill=black,circle,inner sep=0.039cm] {} circle (0.01cm);
     \draw (270:0.5) node[fill=black,circle,inner sep=0.039cm] {} circle (0.01cm);
     \draw (225:0.5) node[fill=black,circle,inner sep=0.039cm] {} circle (0.01cm);
     \draw (180:0.5) node[fill=black,circle,inner sep=0.039cm] {} circle (0.01cm);
     \draw (135:0.5) node[fill=black,circle,inner sep=0.039cm] {} circle (0.01cm);
    \end{tikzpicture}
         \caption{Triangulation of $8$-gon corresponding to $\frac{11}{8}$.}            \label{Fig:running}   
\end{figure}{}
\end{example}

\subsection{Friezes and continuants}

Here we recall the notions of Conway--Coxeter friezes and continuant polynomials.

\begin{defi} A \emph{continuant} of order $n$ is the determinant of a tri-diagonal matrix of the form 
$$\begin{pmatrix} y_1 & b_1 & 0 & \cdots & \cdots & \cdots \\
c_1 & y_2 & b_2 & 0 & \cdots & \cdots  \\
0 & c_2 & y_3 & b_3 & 0 & \cdots \\
\vdots & \ddots &  \ddots & \ddots &  \ddots & \ddots \\
0 & \cdots & 0  &  c_{n-2} & y_{n-1} & b_{n-1} \\
0 & \cdots & \cdots  & 0 & c_{n-1} & y_n \end{pmatrix} \ . $$
We will consider the special case where $b_i=c_i=1$ for all 
$ i \in \{ 1, \ldots , n-1 \} $, and denote this continuant by $P_n(y_1, \ldots, y_n)$. We set $P_0:=1$. 
\end{defi}

\begin{example} \label{Ex:continuants}
The continuants for small $n$ are $P_0=1$, $P_1(y_1)=y_1$, $P_2(y_1,y_2)=y_1y_2-1$, and 
$P_3(y_1, y_2, y_3) = y_1y_2y_3-y_1-y_3 $.
\end{example}

\begin{defi} A (closed) {\em frieze} is a grid of numbers (elements in a commutative ring with $1$) with a finite number of infinite rows, where the top and bottom rows are bi-infinite repetition of $0$s and the second to top and the second to bottom row are bi-infinite repetitions of $1$s:

\begin{equation} \label{Eqn:frieze}
\adjustbox{scale=0.85,center}{
\begin{tikzcd}[row sep=0.35em, column sep=-0.35em]
 &\ldots && 0 && 0 && 0 && 0 && \ldots &\\
   && 1 && 1 && 1 && 1 && 1 && \\
  &\ldots && (-2,0) && (-1,1) && (0,2) && (1,3) && \ldots& \\
   && (-3,0) && (-2,1) && (-1,2) && (0,3) && (1,4) && \ldots \\
     &\ldots && (-3,1) && (-2,2) && (-1,3) && (0,4) && (1,5) & & \ldots \\
     &   & \ldots && \ldots && \ldots && \ldots && \ldots && \ldots \\
   & \ldots && \ldots && \ldots && (-2,w-1) && (-1,w) && (0,w+1) && \ldots \\
   && 1 && 1 && 1 && 1 && 1 && \ldots \\
   &\ldots && 0 && 0 && 0 && 0 && \ldots &
\end{tikzcd}
}
\end{equation}

satisfying the \emph{frieze rule}: any four adjacent entries arranged in a \emph{diamond}
\[ \xymatrix@=0.1em{ & b & \\ a && d \\ & c &} \] satisfy the equation
\begin{equation} 
	\label{eq:friezerule}
	ad - bc= 1 \ . 
\end{equation}
The $w$ rows between are sometimes called the \emph{nontrivial rows} and 
their number $w$ is called the \emph{width} of the frieze. 
With the indexing of \eqref{Eqn:frieze}, we have
\[ 
	(i,i+1)=(i,i+w+2)=1
	\ \ \mbox{ and } \ \ 
	(i,i)=(i,i+w+3)=0 
	\ , \ \ 
	\mbox{ for any } i \in \ZZ \ .
\] 
	The first nontrivial row of the frieze with entries $(i-1,i+1)=:a_i$ is called the \emph{quiddity row} and its entries $\{ a_i \}_{i \in \ZZ}$ the \emph{quiddity sequence of the frieze}. 
\end{defi}

Coxeter showed that friezes of width $w \geq 0$ are periodic in the horizontal direction with period $w+3$ \cite[Section 6]{Coxeter}. In particular, the quiddity sequence of the frieze is determined by $ \{ a_i \}_{i=1}^{w+3} $.
Hence, we often call the latter finite sequence also the quiddity sequence of the frieze.
Furthermore, in \cite[(6.6)]{Coxeter} it is seen that each entry of the frieze is a continuant:
\begin{equation} \label{eq:frieze_entry_cont}
(i,j)=P_{j-i-1}(a_{i+1}, \ldots, a_{j-1}) \ .
\end{equation}

In the following we will consider friezes of finite width $w$ with positive integer entries, that is, all entries in the nontrivial rows are in $\ZZ_+$. Such friezes are called \emph{finite integral friezes} or \emph{Conway--Coxeter friezes}. 
We will often denote them by CC-friezes. 
The beautiful theorem of Conway and Coxeter below relates these friezes  to triangulations of polygons.

\begin{defi}[Frieze of a triangulation] \label{Def:friezetriang}
Let $P$ be an $m$-gon with vertices 
$v_1, \ldots, v_{m}$ and a triangulation $\mc{T}$. 
The \emph{frieze of $\mc{T}$} is the frieze $\mathcal{F}(\mc{T})$ of width $w=m-3$ with quiddity 
$ \{ a_i\}_{i=1}^m $, where $ a_i $ is the number of triangles of $\mc{T}$ incident to $v_i$. We denote the frieze that is associated to the triangulated polygon for the continued fraction $\lambda$ by $\mathcal{F}(\lambda)$. 
\end{defi}

\begin{Thm}[Conway--Coxeter \cite{CoCo1, CoCo2}]
	\label{Thm:CoCo}
There is a bijection between triangulated polygons with $m$ vertices and Conway--Coxeter friezes of width $m-3$: 
\begin{enumerate}
\item Let $\mathcal{F}$ be a Conway--Coxeter frieze of width $m-3$ with the entries labeled $(i,j)$ as in \eqref{Eqn:frieze}. Then the entries $(i-1,i+1)=:a_i$ for 
$ i \in \{ 1, \ldots, m \} $
of $\mathcal{F}$ give the quiddity sequence of a triangulated $m$-gon $P$.
\item Let $P$ be an $m$-gon with triangulation $\mc{T}$. Then the frieze corresponding to $\mc{T}$ is the one defined in Def.~\ref{Def:friezetriang}, that is, $\mathcal{F}(\mc{T})$ is the frieze with quiddity sequence 
$ \{ a_i \}_{i=1}^m $.
The remaining entries in the frieze can be calculated by the frieze rule and the horizontal periodicity.
\end{enumerate}
Note here that sometimes it is more convenient to label the vertices $v_0, \ldots, v_{m-1}$, in this case the quiddity sequence will be denoted by 
	$ \{ a_i\}_{i=0}^{m-1} $.
\end{Thm}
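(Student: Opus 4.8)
The plan is to reduce everything to the quiddity sequence together with a single \emph{closing condition}, and then to run an induction on the number of vertices $m$ via ears.

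First I would record that a frieze is completely determined by its quiddity sequence: by the continuant formula \eqref{eq:frieze_entry_cont} each entry is $(i,j)=P_{j-i-1}(a_{i+1},\ldots,a_{j-1})$, so a sequence $\{a_i\}$, extended periodically, produces a unique bi-infinite array satisfying the frieze rule \eqref{eq:friezerule}. The remaining issue is whether this array \emph{closes}, i.e.\ whether the rows $j-i=m-1$ and $j-i=m$ are rows of $1$s and $0$s, and whether all entries are positive. To control closing I would introduce the transfer matrix $M(a)=\begin{pmatrix} a & -1 \\ 1 & 0\end{pmatrix}$ and use the identity $M(a_1)\cdots M(a_k)=\begin{pmatrix} P_k(a_1,\ldots,a_k) & -P_{k-1}(a_1,\ldots,a_{k-1}) \\ P_{k-1}(a_2,\ldots,a_k) & -P_{k-2}(a_2,\ldots,a_{k-1})\end{pmatrix}$, which follows from the continuant recursion $P_n=y_nP_{n-1}-P_{n-2}$. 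Reading off entries, the closing conditions $(i,i+m)=P_{m-1}(a_{i+1},\ldots,a_{i+m-1})=0$ and $(i,i+m-1)=P_{m-2}(a_{i+1},\ldots,a_{i+m-2})=1$ for all $i$ are equivalent, using $m$-periodicity, to the single \emph{monodromy condition} $M(a_1)M(a_2)\cdots M(a_m)=-\mathrm{Id}$.

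For the direction \emph{triangulation $\Rightarrow$ CC-frieze}, I would argue by induction on $m$. The base case $m=3$ has quiddity $(1,1,1)$ and $M(1)^3=-\mathrm{Id}$, so the frieze closes (it is the trivial width-$0$ frieze). For the inductive step, every triangulation has at least two ears, so I pick a vertex $v_k$ with $a_k=1$; removing it yields a triangulation $\mc{T}'$ of an $(m-1)$-gon whose quiddity sequence agrees with $\{a_i\}$ except that the two neighbours are decreased by $1$. The key computation is the local identity $M(a)M(1)M(b)=M(a-1)M(b-1)$, which shows that the monodromy of $\{a_i\}$ equals the monodromy of the reduced sequence; by induction the latter is $-\mathrm{Id}$, hence so is the former and the frieze closes. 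Positivity of all entries then follows, e.g.\ from the Broline--Crowe--Isaacs interpretation of $(i,j)$ as a count of triangulation matchings, or directly by the same ear induction, and this also gives the statement that the remaining entries are forced by the frieze rule.

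For the converse, given a positive integer quiddity sequence whose frieze closes, I would reconstruct a triangulation, again by induction on $m$. The crucial point is that such a sequence must contain an entry $a_k=1$: if all $a_i\geq 2$, the SE-diagonal recursion $(i,j)=a_{j-1}(i,j-1)-(i,j-2)$ forces $0=(i,i)<(i,i+1)<(i,i+2)<\cdots$ to be \emph{strictly increasing}, contradicting $(i,i+m-1)=1$ for $m\geq 4$. Having found an ear, I apply $M(a-1)M(b-1)=M(a)M(1)M(b)$ in reverse to pass to a shorter positive closed frieze, which by induction is the frieze of a triangulated $(m-1)$-gon; inserting the ear back recovers a triangulated $m$-gon with the prescribed quiddity. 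The main obstacle is exactly this converse: one must check that the ear move preserves positivity and that the two neighbours of an ear are $\geq 2$ (equivalently, that for $m\geq 4$ no two ears are adjacent), so that the reduced quiddity sequence is again a genuine positive quiddity sequence. This, together with the verification that geometric ear removal corresponds precisely to the algebraic identity $M(a)M(1)M(b)=M(a-1)M(b-1)$, is where the real content of the bijection lies.
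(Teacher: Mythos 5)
The paper does not prove Theorem~\ref{Thm:CoCo} at all --- it is quoted from Conway--Coxeter, so there is no in-paper argument to match yours against. Your proposal is a correct outline of the classical proof, recast in the now-standard transfer-matrix language: the identity
\[
M(a_1)\cdots M(a_k)=\begin{pmatrix} P_k(a_1,\ldots,a_k) & -P_{k-1}(a_1,\ldots,a_{k-1}) \\ P_{k-1}(a_2,\ldots,a_k) & -P_{k-2}(a_2,\ldots,a_{k-1})\end{pmatrix}
\]
is correct (check it against \eqref{eq:frieze_entry_cont} and the recursion $P_n=y_nP_{n-1}-P_{n-2}$), the equivalence of the closing conditions with $M(a_1)\cdots M(a_m)=-\mathrm{Id}$ follows since each $M(a)$ has determinant $1$ and cyclic conjugation preserves $-\mathrm{Id}$, and the local identity $M(a)M(1)M(b)=M(a-1)M(b-1)$ does encode ear removal, which is exactly the reduction \eqref{Eq:CCreduction} that the paper records in Remark~\ref{Rmk:cuttingear}. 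Conway and Coxeter's original argument is the same ear induction phrased purely combinatorially; what your monodromy formulation buys is that ``the frieze closes'' becomes a single matrix identity that is manifestly invariant under the ear move, which is cleaner than tracking the bottom rows by hand. The two gaps you flag are real but both close easily. For the backward direction: if $m\ge 4$ and two adjacent quiddity entries satisfied $a_k=a_{k+1}=1$, the SE-recursion would give $(k-1,k+2)=a_{k+1}(k-1,k+1)-(k-1,k)=1\cdot 1-1=0$, which is impossible since $(k-1,k+2)$ is either a nontrivial entry (hence positive) or equals $1$ when $w=1$; so the neighbours of an ear are $\ge 2$ and the reduced sequence is again positive, and its frieze entries are among the old ones (this is the content of the paper's Lemma~\ref{Lem:Friezered}). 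For the forward direction, positivity of all entries does need either the Broline--Crowe--Isaacs count or a separate induction showing that inserting an ear keeps every continuant positive --- citing one of these is fine, but it is the one step you should not wave at, since the frieze rule alone produces the entries without guaranteeing their sign.
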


\begin{example}
Let $P$ be the $8$-gon with triangulation given by $\lambda=\lb 2,2,2,2,2 \rb$ as in Example \ref{Ex:fountain}. Then the frieze $\F(\lambda)$ is shown in Fig.~\ref{Fig:FriezeFountain}:
\begin{figure}[h!]
\begin{tikzcd}[row sep=0.35em, column sep=-0.35em]
\col{1}&&\col{1} &&\col{1} && 1 && 1   && 1   && 1  && 1   && 1   && 1 &&1 && \col{1} && \col{1} && \col{1} && \col{1}\\
&\col{2}&& \col{1}&& \col{6} && 1     && 2     && 2     && 2     && 2    && 2    && 1 && 6 && \col{1} && \col{2} && \col{2}  && \col{2}\\
&& \col{1}&&\col{5}&&\col{5}&& 1     && 3     && 3     && 3     && 3    && 1     && 5 &&5 &&\col{1} &&\col{3} &&\col{3}  &&\col{3} \\
&&& \col{4} &&\col{4}&&\col{4}&& 1     && 4     && 4     && 4      && 1     && 4     && 4 &&4 && \col{1}&& \col{4} && \col{4} && \col{4}\\
& &&& \col{3} &&\col{3}&&\col{3}&& 1     && 5   && 5     && 1      && 3   && 3     && 3 &&3 && \col{1}&& \col{5} && \col{5} && \col{1}\\
& & &&& \col{2} &&\col{2}&&\col{2}&& 1     && 6   && 1     && 2      && 2   && 2     &&2 &&2 && \col{1}&& \col{6} && \col{1} && \col{2}\\
&& &&&& \col{1} &&\col{1}&&\col{1} && 1   && 1  && 1   && 1   && 1   && 1  && 1  && 1 &&\col{1} && \col{1} && \col{1} && \col{1}
\end{tikzcd}
\caption{The frieze obtained from $\lambda=\frac{6}{5}=\lb2,2,2,2,2\rb$.}
\label{Fig:FriezeFountain}
\end{figure}
 
For the triangulated polygon of Example \ref{Ex:running} corresponding to $\lambda=\frac{11}{8}=\lb 2,2,3,2 \rb$ we obtain the frieze $\F(\lambda)$ shown in Fig.~\ref{Fig:FriezeRunning}:
\begin{figure}[h!]
\begin{tikzcd}[row sep=0.35em, column sep=-0.35em]
\col{1}&&\col{1} &&\col{1} && 1 && 1   && 1   && 1  && 1   && 1   && 1 &&1 && \col{1} && \col{1} && \col{1} && \col{1}\\
&\col{1}&& \col{3}&& \col{4} && 1     && 2     && 2     && 3     && 2    && 1    && 3 && 4 && \col{1} && \col{2} && \col{2}  && \col{3}\\
&& \col{2}&&\col{11}&&\col{3}&& 1     && 3     && 5     && 5     && 1    && 2     && 11 &&3 &&\col{1} &&\col{3} &&\col{5}  &&\col{5} \\
&&& \col{7} &&\col{8}&&\col{2}&& 1     && 7     && 8     && 2      && 1     && 7     && 8 &&2 && \col{1}&& \col{7} && \col{8} && \col{4}\\
& &&& \col{5} &&\col{5}&&\col{1}&& 2     && 11   && 3     && 1      && 3   && 5     && 5 &&1 && \col{2}&& \col{11} && \col{3} && \col{1}\\
& & &&& \col{3} &&\col{2}&&\col{1}&& 3     && 4   && 1     && 2      && 2   && 3     &&2 &&1 && \col{3}&& \col{4} && \col{1} && \col{2}\\
&& &&&& \col{1} &&\col{1}&&\col{1} && 1   && 1  && 1   && 1   && 1   && 1  && 1  && 1 &&\col{1} && \col{1} && \col{1} && \col{1}
\end{tikzcd}
\caption{The frieze obtained from $\lambda=\frac{11}{8}=\lb2,2,3,2\rb$.}
\label{Fig:FriezeRunning}
\end{figure}

\end{example}

\begin{lemma}
\label{Lem:nq_conti}
For $ \frac{n}{q} = \lb b_1,\ldots, b_r \rb $ with $ \gcd(n,q) = 1 $ and $ q < n $ the following holds:
\[
	\frac{n}{q}	= \frac{P_r(b_1,\ldots, b_r)}{P_{r-1}(b_2,\ldots,b_r)} \ . 
\]
\end{lemma}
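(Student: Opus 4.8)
The plan is to prove the identity by induction on the length $r$, mirroring the way the Hirzebruch--Jung expansion peels off its leading term, $\lb b_1, \ldots, b_r \rb = b_1 - 1/\lb b_2, \ldots, b_r \rb$. The engine of the argument is a single recurrence for the continuant, which I would isolate before starting the induction.

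That recurrence is the expansion of $P_n$ along its \emph{first} variable,
\[
	P_n(y_1, \ldots, y_n) = y_1\, P_{n-1}(y_2, \ldots, y_n) - P_{n-2}(y_3, \ldots, y_n) \ .
\]
I would obtain it by Laplace expansion of the defining tridiagonal determinant along the first column. The only nonzero entries of that column are $y_1$ in position $(1,1)$ and $c_1 = 1$ in position $(2,1)$; since every off-diagonal entry equals $1$, the cofactor of $y_1$ is $P_{n-1}(y_2, \ldots, y_n)$, while the minor attached to $c_1$ reduces, after one further expansion along its now-trivial first row, to $P_{n-2}(y_3, \ldots, y_n)$ and enters with a minus sign. (Equivalently, one may invoke the reversal symmetry $P_n(y_1,\ldots,y_n)=P_n(y_n,\ldots,y_1)$, clear from the determinant, together with the last-variable recurrence $P_n = y_n P_{n-1} - P_{n-2}$ already visible in Example~\ref{Ex:continuants}.)

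With this recurrence available the induction is short. For $r = 1$ the claim reads $\lb b_1 \rb = b_1 = P_1(b_1)/P_0$. For the step, I would apply the induction hypothesis to the shorter expansion $\lb b_2, \ldots, b_r \rb = P_{r-1}(b_2,\ldots,b_r)/P_{r-2}(b_3,\ldots,b_r)$, substitute it into $\lb b_1,\ldots,b_r\rb = b_1 - 1/\lb b_2,\ldots,b_r\rb$, and clear denominators to get
\[
	\lb b_1, \ldots, b_r \rb = \frac{b_1\, P_{r-1}(b_2,\ldots,b_r) - P_{r-2}(b_3,\ldots,b_r)}{P_{r-1}(b_2,\ldots,b_r)} \ .
\]
The first-variable recurrence identifies the numerator as exactly $P_r(b_1,\ldots,b_r)$, which closes the induction and yields $\frac{n}{q} = P_r(b_1,\ldots,b_r)/P_{r-1}(b_2,\ldots,b_r)$.

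I expect the main obstacle to be conceptual rather than computational: the determinant most naturally unfolds along its last row, producing the recurrence in the \emph{last} variable, whereas the continued fraction strips off its \emph{first} entry. Lining these up --- that is, justifying the first-variable recurrence and keeping the index blocks $(b_2,\ldots,b_r)$ and $(b_3,\ldots,b_r)$ straight through the substitution --- is the only place care is needed. Finally, although the statement is literally an equality of rational numbers, one gets the sharper conclusion $n = P_r(b_1,\ldots,b_r)$ and $q = P_{r-1}(b_2,\ldots,b_r)$ for free: writing $M_i = \left(\begin{smallmatrix} b_i & -1 \\ 1 & 0 \end{smallmatrix}\right)$, the product $M_1\cdots M_r$ has these four continuants as its entries and determinant $1$ (as in \cite{FareyBoat}), and the expansion of that determinant is precisely a B\'ezout relation between $P_r(b_1,\ldots,b_r)$ and $P_{r-1}(b_2,\ldots,b_r)$, forcing $\gcd\bigl(P_r(b_1,\ldots,b_r),\, P_{r-1}(b_2,\ldots,b_r)\bigr) = 1$.
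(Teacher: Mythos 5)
Your proof is correct and follows essentially the same route as the paper, which likewise argues by induction on $r$ using $\lb b_1,\ldots,b_r\rb = b_1 - 1/\lb b_2,\ldots,b_r\rb$ together with the three-term continuant recurrence. In fact you are slightly more careful than the paper's two-line sketch: the paper quotes the last-variable recursion $P_n = y_n P_{n-1} - P_{n-2}$, and your explicit derivation of the first-variable version (equivalently, the reversal symmetry of the continuant) is exactly the detail needed to make that sketch close.
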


The lemma follows from an induction on $ r $ using that $\lb b_1, \ldots, b_r,b_{r+1} \rb= b_1 - \frac{1}{\lb b_2, \ldots, b_{r+1} \rb} $
and that the recursion $ P_n(y_1, \ldots, y_n) = y_n P_{n-1}(y_1, \ldots, y_{n-1}) - P_{n-2} (y_1, \ldots, y_{n-2} ) $ \cite[Number~547]{Muir} holds.

Using the mentioned recursion, one can deduce that 
$ P_n (y_1, \ldots, y_n) $ and $ P_{n-1} (y_1, \ldots, y_{n-1}) $ have no common divisor, for $ n \geq 1 $,  via an induction on $ n $.
This together with \eqref{eq:frieze_entry_cont} and
Lemma~\ref{Lem:nq_conti} implies:

\begin{cor}
	\label{Cor:n_q_in_frieze}
Let $\gcd(n,q)=1$, $ n > q$, and $\frac{n}{q}=\lb b_1,\ldots, b_r \rb $. Then $n$ is the $(0,r+1)$ entry and $q$ is the $(1,r+1)$ entry in the corresponding frieze $\mathcal{F}(\lambda)$.
\end{cor}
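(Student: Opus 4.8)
The plan is to read off both claimed entries directly from the continuant formula \eqref{eq:frieze_entry_cont}, and then match the resulting quotient against $\frac{n}{q}$ using Lemma~\ref{Lem:nq_conti} together with the coprimality of consecutive continuants recorded just above the corollary. Essentially no new computation is needed: the preceding lemmas have prepared every ingredient, and the work is to assemble them with the correct indexing.

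First I would pin down the quiddity sequence of $\mathcal{F}(\lambda)$. By Lemma-Definition~\ref{Lem:quiddity}, the triangulation associated to $\lambda=\lb b_1,\ldots,b_r\rb$ on the $m$-gon, with $m=\sum_{i=1}^r b_i-r+3$, has quiddity sequence $\{a_i\}_{i=0}^{m-1}$ satisfying $a_0=1$, $a_i=b_i$ for $i\in\{1,\ldots,r\}$, and $a_{r+1}=1$. Applying \eqref{eq:frieze_entry_cont} with $(i,j)=(0,r+1)$ gives $j-i-1=r$ with arguments $a_1,\ldots,a_r$, so
\[
	(0,r+1)=P_r(a_1,\ldots,a_r)=P_r(b_1,\ldots,b_r),
\]
and taking $(i,j)=(1,r+1)$ gives $j-i-1=r-1$ with arguments $a_2,\ldots,a_r$, hence
\[
	(1,r+1)=P_{r-1}(a_2,\ldots,a_r)=P_{r-1}(b_2,\ldots,b_r).
\]

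It then remains to identify these two continuants with $n$ and $q$. By Lemma~\ref{Lem:nq_conti},
\[
	\frac{n}{q}=\frac{P_r(b_1,\ldots,b_r)}{P_{r-1}(b_2,\ldots,b_r)},
\]
and, since every $b_i\geq 2$ for $\lambda>1$, both continuants are positive integers. As $\gcd(n,q)=1$ by hypothesis and $P_r(b_1,\ldots,b_r)$, $P_{r-1}(b_2,\ldots,b_r)$ are coprime, both sides are representations of the same positive rational in lowest terms; by uniqueness of the reduced form this forces $n=P_r(b_1,\ldots,b_r)=(0,r+1)$ and $q=P_{r-1}(b_2,\ldots,b_r)=(1,r+1)$, as claimed.

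The only delicate point — and the step I expect to be the main (if minor) obstacle — is the coprimality of precisely the pair $P_r(b_1,\ldots,b_r)$ and $P_{r-1}(b_2,\ldots,b_r)$. The induction stated before the corollary yields coprimality of $P_n(y_1,\ldots,y_n)$ and $P_{n-1}(y_1,\ldots,y_{n-1})$, i.e.\ the pair obtained by deleting the \emph{last} variable, whereas here the smaller continuant arises by deleting the \emph{first}. This gap is closed by the reversal symmetry $P_n(y_1,\ldots,y_n)=P_n(y_n,\ldots,y_1)$ of continuants, equivalently by the left-hand form of the recursion $P_r(b_1,\ldots,b_r)=b_1\,P_{r-1}(b_2,\ldots,b_r)-P_{r-2}(b_3,\ldots,b_r)$: any common divisor of $P_r(b_1,\ldots,b_r)$ and $P_{r-1}(b_2,\ldots,b_r)$ must then divide $P_{r-2}(b_3,\ldots,b_r)$, and the inductive hypothesis (now on $\{b_2,\ldots,b_r\}$) shows this common divisor is $\pm 1$.
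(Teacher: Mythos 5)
Your proof is correct and follows essentially the same route as the paper, which likewise combines the continuant formula \eqref{eq:frieze_entry_cont} for frieze entries with Lemma~\ref{Lem:nq_conti} and the coprimality of consecutive continuants. Your final paragraph is in fact slightly more careful than the paper: the text only asserts coprimality of $P_n(y_1,\ldots,y_n)$ and $P_{n-1}(y_1,\ldots,y_{n-1})$ (last variable deleted), and your use of the reversal symmetry of continuants to pass to the pair $P_r(b_1,\ldots,b_r)$, $P_{r-1}(b_2,\ldots,b_r)$ closes an indexing gap the paper leaves implicit.
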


\section{Resolution of singularities and lotuses} \label{Sec:lotus}

Here we recall basic notions about complex plane curve singularities and their resolutions of singularities. In particular, we will be interested in the \emph{lotus} of a plane curve singularity. We follow \cite{Lotus}, where more details and references about this material can be found.

Let $S$ be a smooth complex analytic surface, and consider the germ $(S,s)$ at the point $s \in S$.  A \emph{curve $C$ in a complex surface} $S$ is an effective Cartier divisor of $S$, i.e., a complex subspace of $S$ that is locally at $s$ defined by the vanishing of a non-zero holomorphic function. Choosing a holomorphic coordinate system $(x,y)$ at $s$, we have $\mc{O}_{S,s} \cong \CC\{x,y\}$ the ring of convergent power series with maximal ideal $\mf{m}_{S,s}=(x,y)$. If $C$ passes through $s \in S$, the germ $(C,s)$ is thus locally defined by $f \in \mf{m}_{S,s}$ and $\mc{O}_{C,s}\cong \CC\{x,y\}/(f)$. 
Since $(S,s)$ is locally isomorphic to $(\CC^2,0)$, we call $C$ a \emph{plane curve singularity}. Locally at $s$ the curve $C$ is given as $V(f)=\{f(x,y)=0\}$. Recall that in this setting the singular locus of $(C,s)$ is defined by the vanishing locus of the Jacobian ideal $J_C := (f, \frac{\partial f}{\partial x}, \frac{\partial f}{\partial y} ) $, see e.g. \cite{Cut2004}. In the following we will only consider reduced curve singularities, that is, $C$ is a reduced complex analytic space, so that locally at $s$ the convergent power series $f \in \CC\{x,y\}$ is reduced. \\
Note that one can also consider $f$ as a formal power series (as done in \cite{Lotus}), i.e., define a plane curve singularity in $\widehat{\mathcal{O}}_{S,s}\cong \CC\lb x,y\rb$. \\

Let $(C,s)$ and $(D,s)$ be two curve singularities in $(S,s)$ defined by functions $f, g \in \mc{O}_{S,s}$ respectively. The \emph{intersection number} of $C$ and $D$ at $s$ is denoted by $C \cdot D$ and defined by
$$C \cdot D:= \dim_{\CC} (\mc{O}_{S,s}/(f,g)) \ . $$
More generally, the intersection number of two divisors $C$ and $D$ on any smooth complex surface may be calculated whenever at least one of them has compact support. 
Moreover, the \emph{self-intersection number} $C \cdot C$ or $C^2$ at $s$ can also be defined, see \cite[Thm.~2.3]{Laufer}.
Note that the self-intersection number may be negative. 

\subsection{Blowups and resolution graphs} In the following we will define the resolution graph of a plane curve singularity (sometimes called weighted dual graph). This material is well-known and covered in the literature, see e.g., \cite[Section 2.4]{Lotus}, \cite[Chapter 5]{dJP}, \cite[Section 3.6]{Wall}. We will only introduce the notions needed without any proofs. \\
First recall the \emph{blowup of a point $s$ in $\CC^2$}:  choose coordinates $(x,y)$ at $s$ (so that $s$ can be thought of as the origin) and denote by $\PP^1(\CC)$ the projective line with coordinates $ (X:Y) $ (lines through $s$). Define $\Bl_s(\CC^2)$ as the set of points in $\CC^2 \times \Proj^1(\CC)$ satisfying the equation 
\begin{equation} 
	\label{eq:BU}
	x Y - y X = 0 \ .
\end{equation}  
The projection of $\CC^2 \times \Proj^1(\CC)$ onto $\CC^2$ yields the \emph{blowup map} 
$\pi: \Bl_s(\CC^2) \xrightarrow{} \CC^2. $
Any point $s' = (\alpha, \beta) \neq s = (0,0) $ determines a unique $(X:Y) = (\alpha : \beta)$ and thus $\pi^{-1}(s')$ is a point, whereas $\pi^{-1}(s) =: E$ is a curve isomorphic to $\PP^1(\CC)$, called the \emph{exceptional divisor} of the blowup. The same construction can be carried out for a point $s$ on a smooth complex surface $S$ (introducing local coordinates $(x,y)$ in a neighbourhood of $s$). One can check, see e.g. \cite[Section 2.4]{Lotus}, \cite[Section 3.2]{Wall}, that this yields a well-defined blowup morphism that is independent of the chosen coordinates, denoted by $\pi: \widetilde S \xrightarrow{} S$, where $\widetilde S$ is the blown-up surface. 
Since blowups are isomorphisms away from the corresponding centers, 
 it makes sense to choose as the center of a blowup a finite set of points $ \{ s_1, \ldots, s_\tau \} $.
\\
For a complex surface $ S $ and a finite set of points $Z:=\{s_1, \ldots, s_{\tau} \} \subseteq S $ a {\em sequence of blowups with centers lying above $Z$} is defined as a proper, birational morphism $ \pi \colon \widetilde{S} \to S $ factoring as 
$  \pi = \varphi_{\ell} \circ \cdots \circ \varphi_2 \circ \varphi_1, $ 
for some $ {\ell} > 0 $, where 
\begin{enumerate}[(1)]
	\item $ \varphi_1 \colon S^{(1)} := \Bl_{Z} (S) \to S $ is the blowup with center the given points $Z$,
	\item for $ i > 0 $, 
	$ \varphi_{i+1} \colon S^{(i+1)} \to S^{(i)} $ is the blowup of the complex surface $ S^{(i)} $ with center a finite set of points $ Z^{(i)} \subseteq E^{(i)} $,
	where $ E^{(i)} := \pi_i^{-1} (Z) \subset S^{(i)} $ is called the {\em exceptional locus} of $ \pi_i := \varphi_i \circ \cdots \circ \varphi_1 $,
	and 
	\item $ \widetilde{S} := S^{({\ell})} $.   
\end{enumerate}
Notice that the restriction $\pi_i|_{S^{(i)}\backslash E^{(i)}} \colon S^{(i)}\backslash E^{(i)} \to S \backslash Z $ is an analytic isomorphism, and $S^{(i)}$ is smooth for every $ i \in \{ 1, \ldots, \ell \} $.
\\
Since we are working with germs, we always assume that $Z=\{s\}$ is a single point.  If $C \subseteq S$ is a curve passing through $s$, then its preimage $\pi_i^{-1}(C)$ is called the \emph{total transform} of $C$ in $S^{(i)}$ and the closure of $\pi_i^{-1}(C)\backslash E^{(i)}$ in $S^{(i)}$ is called its \emph{strict transform}. 
  \\
 
 Let $C$ be a curve in a smooth complex surface $S$. Then an \emph{embedded resolution of singularities } of $C$ is a sequence of blowups 
  $\pi \colon \widetilde S \to S $ as defined above, such that
 \begin{enumerate}[(1)]
 \item $\widetilde S$ is smooth,
 \item the total transform of $C$ is a normal crossing divisor, and 
 \item the strict transform of $C$ is smooth.
 \end{enumerate}
One can show that there always exists an embedded resolution of singularities by a sequence of blowups of points, see e.g. \cite[Thm.~5.4.2]{dJP}, \cite[Thm.~3.4.4]{Wall}. Moreover, this resolution can be chosen \emph{minimal}, that is, any other embedded resolution of $C$ factors through it. For a constructive proof see \cite[p.~48f]{Wall}.

\begin{defi}
Let $C$ be a curve in a smooth complex surface $S$ and let $\pi \colon \widetilde S \xrightarrow{} S$ be an embedded resolution of singularities of $C$. Its \emph{(weighted dual) resolution graph} is a simple finite connected graph whose vertices are labeled by the irreducible components of the exceptional locus of $ \pi $ and two vertices are connected by an edge if their associated curves intersect in $\widetilde S$.
	The weight of each vertex corresponding to a component $E_j$ of the exceptional locus is the self-intersection number $E_j^2$ on $\widetilde S$.
	\\
	Sometimes the components corresponding to the strict transform of $C$ in $ \widetilde{S} $ are taken into account as additional vertices, in which case they are drawn with an arrowhead without a weight. 
	\\
When $\pi: \widetilde S \xrightarrow{} S$ is the minimal embedded resolution of singularities of $C$, then we will write $\Gamma(C)$ for its weighted dual resolution graph.
\end{defi}

\begin{example}
	\label{Ex:Resol_cusp}
	The cusp in  $S=\CC^2$ is defined by $f=y^2-x^3$. 
	One can show that a minimal embedded resolution is obtained with three blowups: 
	first we blow up the origin. 
		For explicit computations, one usually uses that the projective line $ \PP^1 (\CC) $ is covered by two affine charts.
		Using the notation of \eqref{eq:BU}, there is 
		the $ X $-chart, where the projective coordinate $ X $ is invertible, as well as the $ Y $-chart, where $ Y $ is invertible.
		In the $ X $-chart, the equation \eqref{eq:BU} can be rewritten as $ y = x \frac{Y}{X} $. 
	By setting $ y' := \frac{Y}{X} $, we see that the $ X $-chart is the affine chart with coordinates $ (x, y') $ and we have to apply the substitution $ y = x y' $ in order to determine the total transform of $ f $.
	(The situation in the $ Y $-chart is completely analogous.)
	Hence, in the $X$-chart of $S^{(1)}$,
	 the total transform is $f_1 := x^2(y'^2-x)$  and the exceptional divisor is $E_1=V(x)$. 
	In the  $ Y $-chart the strict transform is smooth and does not meet $E_1$.
	In order to lighten the notation, we apply the usual abuse of notation and write $ y $ instead of $ y' $ for the new coordinate in the $ X $-chart, for example.\\
	The center for the second blowup is the origin of the $ X$-chart, 
	which yields the total transform $f_2 :=x^2y^3(y-x)$ in the $Y$-chart of the second blowup . Here the exceptional divisors are locally given by $E_1=V(x)$, $E_2=V(y)$ and the strict transform is smooth. However, the total transform is not a normal crossing divisor, hence we need to blowup the origin of the given chart. 
	Observe that no further blowups are required in the $ Y $-chart of the second blowup.
	\\
In the $ Y $-chart of the third blowup the total transform is $f_{3,Y} =x^2y^6(1-x)$ with exceptional divisors $E_1=V(x)$ and $E_3=V(y)$, whereas in the respective $ X $-chart we obtain the total transform $f_{3,X} :=x^6y^3(y-1)$ with exceptional divisors seen $E_2=V(y)$ and $E_3=V(x)$. 
	The total transform has normal crossings and thus we have obtained an embedded resolution of singularities. 
	\\
	Using the formula \cite[Prop.~2.37]{Lotus} for computing the self-intersection numbers of the $E_i$, we obtain the dual resolution graph $\Gamma(V(y^2-x^3))$, see Fig.~\ref{fig:GammaofCusp}.
    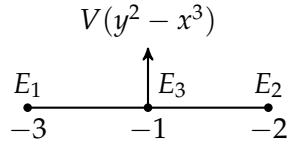
\begin{figure}[h!]
    \begin{center}
\begin{tikzpicture}[scale=0.8]
\draw [->, color=black, thick] (4,0)--(4,1);
\draw [-, color=black, thick](2,0) -- (6,0);

\node [below] at (2,0) {$-3$};
\node [below] at (4,0) {$-1$};
\node [below] at (6,0) {$-2$};

\node [above] at (2,0) {$E_{1}$};
\node [above] at (4.4,0) {$E_3$};
\node [above] at (6,0) {$E_{2}$};

\node[draw,circle, inner sep=1.pt,fill=black] at (2,0){};
\node[draw,circle, inner sep=1.pt,fill=black] at (4,0){};
\node[draw,circle, inner sep=1.pt,fill=black] at (6,0){};

\node [above] at (4,1) {$V(y^2 - x^3)$};

\end{tikzpicture}
\end{center}
 \caption{The dual resolution graph of the cusp $C=V(y^2-x^3)$.}
\label{fig:GammaofCusp}
   \end{figure}
\end{example}

\begin{example} 
	Analogously to the previous example, one determines  the dual resolution graph of the plane curve given by
		$ f=x^{11}- y^8 $,  seen in Fig.~\ref{fig:GammaofRunning}.
    \begin{figure}[h!]
    \begin{center}
\begin{tikzpicture}[scale=0.8]
\draw [->, color=black, thick] (4,0)--(4,1);
\draw [-, color=black, thick](0,0) -- (10,0);

\node [below] at (0,0) {$-4$};
\node [below] at (2,0) {$-3$};
\node [below] at (4,0) {$-1$};
\node [below] at (6,0) {$-2$};
\node [below] at (8,0) {$-3$};
\node [below] at (10,0) {$-2$};

\node [above] at (0,0) {$E_{1}$};
\node [above] at (2,0) {$E_{4}$};
\node [above] at (4.4,0) {$E_6$};
\node [above] at (6,0) {$E_{5}$};
\node [above] at (8,0) {$E_{3}$};
\node [above] at (10,0) {$E_{2}$};

\node[draw,circle, inner sep=1.pt,fill=black] at (0,0){};
\node[draw,circle, inner sep=1.pt,fill=black] at (2,0){};
\node[draw,circle, inner sep=1.pt,fill=black] at (4,0){};
\node[draw,circle, inner sep=1.pt,fill=black] at (6,0){};
\node[draw,circle, inner sep=1.pt,fill=black] at (8,0){};
\node[draw,circle, inner sep=1.pt,fill=black] at (10,0){};

\node [above] at (4,1) {$V(x^{11} - y^8)$};

\end{tikzpicture}
\end{center}
 \caption{The dual resolution graph of $C=V(x^{11}-y^8)$.}
\label{fig:GammaofRunning}
   \end{figure}
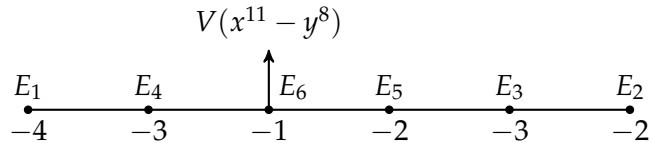
\end{example}

\subsection{Fans, toric geometry and lotus of a Newton fan}

A lotus is a simplicial complex that encodes the dual resolution graph as well as other graphs related to the resolution, such as the Eggers--Wall tree. Lotuses were introduced by Popescu-Pampu in \cite[Section 5]{PPP-cerfvolant}, see \cite{Lotus} for an extensive introduction.

In the following we consider reduced curves $C$ on a smooth complex surface $S$. In this paper we are mainly interested in simple lotuses coming from Newton fans.

For the basics of toric geometry see \cite{CLS, Fulton}. Let $N$ be a lattice of rank $2$ with chosen basis $e_1, e_2$. As usual, we denote by $N_{\RR}=N \otimes_{\ZZ}\RR$ the real vector space generated by $N$, that is, $N_{\RR}\cong \RR^2$. We denote the cone generated by $e_1, e_2$ by $\sigma_0=\langle e_1, e_2 \rangle_{\RR_{\geq 0}}$. The \emph{slope} of an element $w=c e_1 +d e_2 \in N_{\RR} \backslash \{ 0 \}$ is $\frac{d}{c}$. For $\lambda \in \QQ_{\geq 0} \cup \{ \infty \}$ one denotes by $p(\lambda)$ the unique primitive element of $N$ contained in $\sigma_0$ with slope $\lambda$. \\
A \emph{fan} $\Sigma$ of the lattice $N$ is a finite set of strictly convex (rational) cones $\sigma$ that is closed under the operation of taking faces and such that if $\sigma_1, \sigma_2$ are contained in $\Sigma$, then $\sigma_1 \cap \sigma_2$ is a face of each $\sigma_i$, for $i \in \{ 1, 2 \}$.

\begin{defi}
Let $N=\langle e_1, e_2 \rangle$ be a rank $2$ lattice with associated cone $\sigma_0 \cong \RR_{\geq 0}^{2} $. 
Let $\mc{E}=\{ \lambda_1, \ldots, \lambda_{\rho} \} \subseteq \QQ_{\geq 0}$ be a finite set. 
We define the \emph{fan of $ \mc{E} $}, $\Sigma(\mc{E})=\Sigma(\lambda_1, \ldots, \lambda_{\rho})$, 
to be the fan subdividing $\sigma_0$ by the rays $p(\lambda_i)$, where $ i \in \{ 1, \ldots, \rho \} $.
\end{defi}

\begin{example}
	\label{Ex:3durch2}
Let $\mc{E}=\{ \frac{3}{2} \}$, then $p(\frac{3}{2})=2e_1+3e_2$ and the fan $\Sigma(\frac{3}{2})$ consists of the two two-dimensional cones $\sigma_1=\langle e_1, 2e_1+3e_2\rangle_{\RR_{\geq 0}} $ and $\sigma_2=\langle 2e_1+3e_2, e_2 \rangle_{\RR_{\geq 0}}$, the three one-dimensional cones $\sigma_{11}= \langle e_1\rangle_{\RR_{\geq 0}}$, $\sigma_{12}=\sigma_{21}=\langle 2e_1+3e_2\rangle_{\RR_{\geq 0}}$, $\sigma_{22}=\langle e_2\rangle_{\RR_{\geq 0}}$ and the cone $\{0\}$.
In Fig.~\ref{Fig:fan_x3_y2}, we visualize the fan $ \Sigma(\frac{3}{2}) $.

\begin{figure}[h!]
	\begin{center}
		\begin{tikzpicture}[scale=0.8]

			\filldraw [fill=yellow!40,draw=yellow!40] (0,0) -- (2.22,3.33) -- (0,3.33) -- cycle;
			
			\filldraw [fill=red!60,draw=red!60] (0,0) -- (2.22,3.33) -- (2.22,0) -- cycle;

			\draw [-, black!50, thick] (0,0) -- (2.35,0);
			\draw [->, very thick] (0,0) -- (1,0);
			\draw [-, black!50, thick] (0,0) -- (0,3.35);
			\draw [->, very thick] (0,0) -- (0,1);
			\draw [-, black!50, thick] (0,0) -- (2.22,3.33);
			\draw [->, very thick] (0,0) -- (2,3);

			\draw [dotted] (0,0) grid (2.2,3.2);
			
			\node [below] at (2,0) {$ \sigma_{11} $}; 
			\node [left] at (0,3) {$ \sigma_{22} $}; 
			\node at (2.65,3.25) {$ \sigma_{12} $};
			\node  at (1.55,1.3) {$ \sigma_{1} $};
			\node at (0.7,2.25) {$ \sigma_{2} $};

		\end{tikzpicture}
	\end{center}
	\caption{The fan $ \Sigma(\frac{3}{2}) $.}  
	\label{Fig:fan_x3_y2} 
\end{figure}
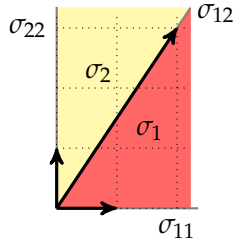     
\end{example}

We will often choose suitable coordinates at a given point of a smooth surface coming from blowups. For this we say that a \emph{cross} on a smooth surface germ $(S,s)$ is a pair $(L,L')$ of transversal smooth branches on $(S,s)$. A local coordinate system $(x,y)$ on the surface germ \emph{defines the cross} $(L,L')$ if $L=V(x)$ and $L'=V(y)$. \\

Let $f \in \mc{O}_{S,s} \cong \CC\{x,y\}$ be a nonzero element, written $f=\sum a_{ij} x^i y^j$. 
The \emph{support of $f$} is $\mathrm{supp}(f)=\{ (i,j) \in \ZZ_{\geq 0}^2: a_{ij} \neq 0 \}$.
The \emph{Newton polyhedron} $\mc{N}(f)$ is the following convex subset of $\RR_{\geq 0}^2$:
$$\mc{N}(f):= \mathrm{conv}(\mathrm{supp}(f) + \ZZ_{\geq 0}^2 ) \subseteq \RR_{\geq 0}^2 \ .$$
Here $ + $ denotes the Minkowski sum, $ A + B := \{ a+ b \mid a \in A, b \in B\} $ for $ A, B $ subsets of an additive group.
The \emph{boundary} of $\mc{N}(f)$ is denoted by $\partial \mc{N}(f)$. Note that if we choose coordinates relative to a cross $(L,L')$, then we can define the weight lattices $N:=N_{L,L'} \simeq \ZZ^2$, $M:=M_{L,L'} = N^\vee \simeq \ZZ^2$, and the cones $\sigma_0=\langle e_1, e_2 \rangle_{\RR_{\geq 0}} \simeq \RR_{\geq 0}^2 $ and $\sigma_0^\vee \simeq \RR_{\geq 0}^2 $. Then $\mathrm{supp}(f) \subseteq \sigma_0^\vee \cap M \simeq \ZZ_{\geq 0}^2$ and
$$\mc{N}(f)=\mathrm{conv}(\mathrm{supp}(f) + (  \sigma_0^\vee \cap M )  ) \subseteq \sigma_0^\vee   \ .$$
The \emph{Newton fan} $\Sigma(f)$ of $f$ is the fan in $N$ obtained by subdividing the cone $\sigma_0$ using the rays orthogonal to the compact edges of the Newton polyhedron $\mc{N}(f) \subseteq \sigma_0^\vee$ of $f$. More generally, a \emph{Newton fan} $\Sigma$ in $N$ is any fan subdividing the regular cone $\sigma_0$.

\begin{example}
	\label{Ex:cusp}
The Newton polyhedron $\mc{N}(x^3-y^2)$ has only one compact edge $\overline{(3,0),(0,2)}$. 
The orthogonal ray is generated by the primitive vector $w=2e_1+3e_2$. Thus the Newton fan is $\Sigma(f)=\Sigma(\frac{3}{2})$
of Example~\ref{Ex:3durch2}.
\end{example}
 
Suppose that $(L,L')$ defines a cross on $(S,s)$ and $C$ is a curve singularity in $(S,s)$. One can show \cite[Prop.~4.13]{Lotus} that the Newton polyhedron $\mc{N}(f)$ and the Newton fan $\Sigma(f)$ do not depend on the choice of the defining functions $x,y,f$ of the curve germs $L,L', C$.

Now we have gathered all the necessary notation to define $\Lambda(\Sigma)$, the lotus of a Newton fan $\Sigma$ (see Def.~\ref{Def:lotus}). 

\begin{DefCon} 
Let $N$ be a lattice of rank $2$ with a chosen basis $(e_1,e_2)$. The \emph{petal associated with the basis $(e_1,e_2)$} (or \emph{base petal}) is the convex and compact triangle $\delta(e_1,e_2) \subseteq N_{\RR}$ with vertices $e_1, e_2, e_1+e_2$. The line segment $[e_1,e_2]$ is called its \emph{base} (oriented from $e_1$ to $e_2$). The points $e_1$ and $e_2$ are called the \emph{basic vertices} of the petal. The segments $[e_i, e_1+e_2]$ for $ i\in \{ 1, 2 \} $ are called its \emph{lateral edges}. 
We illustrate these notions in Fig.~\ref{Fig:base_petal}(a).

\begin{figure}[h!]
	\begin{center}
		\begin{tikzpicture}[scale=1.4]
			
			\draw [fill=orange!40] (1,0) -- (0,1) -- (1,1) -- cycle;
			
			\draw [->, thick, blue] (1,0)--(0.5, 0.5);
			\draw [-, thick, blue] (0.5, 0.5)--(0,1);

			\draw [->, thick, Farbe] (1,0)--(1, 0.5);
			\draw [-, thick, Farbe] (1, 0.5)--(1,1);

			\draw [->, thick, farbe] (1,1)--(0.5, 1);
			\draw [-, thick, farbe] (0.5, 1)--(0,1);
			
			\draw [dotted] (0,0) grid (2,2.2);
			\draw [->] (0,0) -- (2.35,0);
			\draw [->] (0,0) -- (0,2.35);
			\node [below] at (1,0) {$ e_1 $}; 
			\node [left] at (0,1) {$ e_2 $}; 
			\node at (-0.15,-0.15) {$ 0 $}; 
			\node at (1.4,1.15) {$ e_1 + e_2 $};

			 \node at (-0.6,2.15) {(a)}; 
		\end{tikzpicture}
		\hspace{50pt}
			\begin{tikzpicture}[scale=1.4]
			
			\draw [fill=orange!40] (1,0) -- (0,1) -- (1,1) -- cycle;
			\draw [fill=yellow!40] (1,0) -- (1,1) -- (2,1) -- cycle;
			\draw [fill=red!40] (0,1) -- (1,1) -- (1,2) -- cycle;
			
			\draw [->, thick, blue] (1,0)--(0.5, 0.5);
			\draw [-, thick, blue] (0.5, 0.5)--(0,1);
			
			\draw [->] (1,0)--(1, 0.5);
			\draw [->] (1,1)--(0.5, 1);
			\draw [->] (1,1)--(1, 1.5);
			\draw [->] (1,2)--(0.5, 1.5);
			\draw [->] (1,0)--(1.5, 0.5);
			\draw [->] (2,1)--(1.5, 1);

			\draw [dotted] (0,0) grid (2.2,2.2);
			\draw [->] (0,0) -- (2.35,0);
			\draw [->] (0,0) -- (0,2.35);
			\node [below] at (1,0) {$ e_1 $}; 
			\node [left] at (0,1) {$ e_2 $}; 
			\node at (-0.15,-0.15) {$ 0 $};

			\node at (-0.6,2.15) {(b)};
		\end{tikzpicture}
	\end{center}
	\caption{(a) Base petal $ \delta(e_1,e_2) $.
	(b) Lotus consisting of the base petal $ \delta(e_1,e_2) $ (in orange)
	as well as the iteratively constructed petals $ \delta(e_1+e_2,e_2) $ (in red on top of $ \delta(e_1,e_2) $) and $ \delta(e_1,e_1+e_2) $ (in yellow to the right of $ \delta(e_1,e_2) $).}
	\label{Fig:base_petal} 
	\label{Fig:base_petal_plus}
\end{figure}
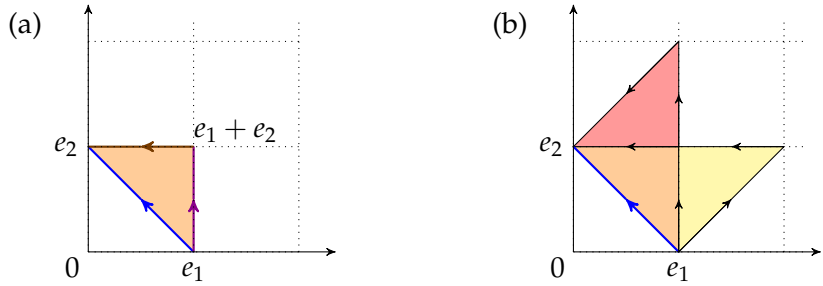     

Now one  constructs more petals iteratively: from the basis $(e_1, e_1 +e_2)$ the petal $\delta(e_1, e_1+e_2)$ may be constructed and from $(e_1+e_2,e_2)$ the petal $\delta(e_1+e_2,e_2)$ and so on. In the $n$-th step thus $2^n$ petals are added to the ones already constructed. The base of any petal $\delta$, except for the base petal $\delta(e_1,e_2)$, has a common edge with exactly one of the petals constructed in the previous step. This petal is called the \emph{parent} of $\delta$. Note that we consider bases ordered, that is, they respect the orientation of $N_{\RR}$ (sometimes these are called \emph{positive bases}). 
Fig.~\ref{Fig:base_petal_plus}(b) pictures the first two petals constructed from the base petal $ \delta (e_1, e_2) $.
\end{DefCon} 

This construction yields an infinite simplicial complex in $\sigma_0$, called the \emph{universal lotus} $\Lambda(e_1, e_2)$ of $N$ relative to the basis $(e_1, e_2)$. 
A partial view on the universal lotus is given in%
\footnote{This is a slight adaptation of the original figure in~\cite[Fig.~1.26, p.~82]{Lotus} resp.~\cite[Fig.~10, p.~321]{PPP-cerfvolant}.}
Fig.~\ref{Fig:Univ_Lotus}.
\begin{defi}
A \emph{lotus $\Lambda$ relative to $(e_1, e_2)$} 
is either the segment $[e_1,e_2]$ or the union of a non-empty set of petals of the universal lotus $\Lambda(e_1,e_2)$, stable under the operation of taking the parent of the petal. The segment $[e_1, e_2]$ is called the \emph{base} of $\Lambda$ and if $\Lambda$ is of dimension $2$, the petal $\delta(e_1, e_2)$ is called its \emph{base petal} or \emph{base triangle}.  
Further, if $\Lambda$ is a lotus relative to $(e_1, e_2)$ then a \emph{sublotus} $\Lambda' \subseteq \Lambda$ is a sub-simplicial complex of $\Lambda$ that is also a lotus relative to $(e_1,e_2)$. 
\end{defi}

\begin{figure}[h!]
	\begin{center}
		\begin{tikzpicture}[scale=0.8]
			
			\draw [fill=blue!40] (1,0) -- (0,1) -- (1,1) -- cycle;
			\draw [fill=blue!20] (1,0) -- (1,1) -- (2,1) -- cycle;
			\draw [fill=blue!20](1,0) -- (2,1) -- (3,1) -- cycle;
			\draw [fill=blue!20](1,0) -- (3,1) -- (4,1) -- cycle;
			\draw [fill=blue!20](1,0) -- (4,1) -- (5,1) -- cycle;
			\draw [fill=blue!20](1,0) -- (5,1) -- (6,1) -- cycle;
			\draw [fill=blue!20](1,0) -- (6,1) -- (7,1) -- cycle;
			\draw [fill=blue!20](1,0) -- (7,1) -- (8,1) -- cycle;
			\draw [fill=blue!20](1,0) -- (8,1) -- (9,1) -- cycle;
			\draw [fill=blue!20](1,0) -- (9,1) -- (10,1) -- cycle;
			
			\draw [fill=blue!20] (1,1) -- (2,1) -- (3,2) -- cycle;
			\draw [fill=blue!20] (1,1) -- (3,2) -- (4,3) -- cycle;
			\draw [fill=blue!20] (1,1) -- (4,3) -- (5,4) -- cycle;
			\draw [fill=blue!20] (1,1) -- (5,4) -- (6,5) -- cycle;
			\draw [fill=blue!20] (1,1) -- (6,5) -- (7,6) -- cycle;
			\draw [fill=blue!20] (1,1) -- (7,6) -- (8,7) -- cycle;
			\draw [fill=blue!20] (1,1) -- (8,7) -- (9,8) -- cycle;
			\draw [fill=blue!20] (1,1) -- (9,8) -- (10,9) -- cycle;
			
			\draw [fill=blue!20] (2,1) -- (3,2) -- (5,3) -- cycle;
			\draw [fill=blue!20] (2,1) -- (5,3) -- (7,4) -- cycle;
			\draw [fill=blue!20] (2,1) -- (7,4) -- (9,5) -- cycle;
			
			\draw [fill=blue!20] (2,1) -- (3,1) -- (5,2) -- cycle;
			\draw [fill=blue!20] (2,1) -- (5,2) -- (7,3) -- cycle;
			\draw [fill=blue!20] (2,1) -- (7,3) -- (9,4) -- cycle;
			
			\draw [fill=blue!20] (3,2) -- (5,3) -- (8,5) -- cycle;
			
			\draw [fill=blue!20] (3,2) -- (4,3) -- (7,5) -- cycle;
			\draw [fill=blue!20] (3,2) -- (7,5) -- (10,7) -- cycle;
			
			\draw [fill=blue!20] (4,3) -- (5,4) -- (9,7) -- cycle;
			
			\draw [fill=blue!20] (3,1) -- (5,2) -- (8,3) -- cycle;
			\draw [fill=blue!20] (3,1) -- (4,1) -- (7,2) -- cycle;
			\draw [fill=blue!20] (3,1) -- (7,2) -- (10,3) -- cycle;
			
			\draw [fill=blue!20] (4,1) -- (5,1) -- (9,2) -- cycle;
			
			\draw [fill=blue!20] (0,1) -- (1,1) -- (1,2) -- cycle;
			\draw [fill=blue!20] (0,1) -- (1,2) -- (1,3) -- cycle;
			\draw [fill=blue!20] (0,1) -- (1,3) -- (1,4) -- cycle;
			\draw [fill=blue!20] (0,1) -- (1,4) -- (1,5) -- cycle;
			\draw [fill=blue!20] (0,1) -- (1,5) -- (1,6) -- cycle;
			\draw [fill=blue!20] (0,1) -- (1,6) -- (1,7) -- cycle;
			\draw [fill=blue!20] (0,1) -- (1,7) -- (1,8) -- cycle;
			\draw [fill=blue!20] (0,1) -- (1,8) -- (1,9) -- cycle;
			\draw [fill=blue!20] (0,1) -- (1,9) -- (1,10) -- cycle;
			
			\draw [fill=blue!20] (1,1) -- (1,2) -- (2,3) -- cycle;
			\draw [fill=blue!20] (1,1) -- (2,3) -- (3,4) -- cycle;
			\draw [fill=blue!20] (1,1) -- (3,4) -- (4,5) -- cycle;
			\draw [fill=blue!20] (1,1) -- (4,5) -- (5,6) -- cycle;
			\draw [fill=blue!20] (1,1) -- (5,6) -- (6,7) -- cycle;
			\draw [fill=blue!20] (1,1) -- (6,7) -- (7,8) -- cycle;
			\draw [fill=blue!20] (1,1) -- (7,8) -- (8,9) -- cycle;
			\draw [fill=blue!20] (1,1) -- (8,9) -- (9,10) -- cycle;
			
			\draw [fill=blue!20] (1,2) -- (2,3) -- (3,5) -- cycle;
			\draw [fill=blue!20] (1,2) -- (3,5) -- (4,7) -- cycle;
			\draw [fill=blue!20] (1,2) -- (4,7) -- (5,9) -- cycle;
			
			\draw [fill=blue!20] (1,2) -- (1,3) -- (2,5) -- cycle;
			\draw [fill=blue!20] (1,2) -- (2,5) -- (3,7) -- cycle;
			\draw [fill=blue!20] (1,2) -- (3,7) -- (4,9) -- cycle;
			
			\draw [fill=blue!20] (2,3) -- (3,5) -- (5,8) -- cycle;
			
			\draw [fill=blue!20] (2,3) -- (3,4) -- (5,7) -- cycle;
			\draw [fill=blue!20] (2,3) -- (5,7) -- (7,10) -- cycle;
			
			\draw [fill=blue!20] (3,4) -- (4,5) -- (7,9) -- cycle;

			\draw [fill=blue!20] (1,3) -- (2,5) -- (3,8) -- cycle;
			\draw [fill=blue!20] (1,3) -- (1,4) -- (2,7) -- cycle;
			\draw [fill=blue!20] (1,3) -- (2,7) -- (3,10) -- cycle;
			
			\draw [fill=blue!20] (1,4) -- (1,5) -- (2,9) -- cycle;
			
			\draw [dotted] (0,0) grid (10.5,10.5);
			\draw [->] (0,0) -- (10.7,0);
			\draw [->] (0,0) -- (0,10.7);
			\node [below] at (1,0) {$ e_1 $}; 
			\node [left] at (0,1) {$ e_2 $}; 
			\node at (-0.15,-0.15) {$ 0 $};

		\end{tikzpicture}
	\end{center}
	\caption{First parts of the universal lotus $\Lambda(e_1,e_2)$ of $ N = \ZZ^2 $ relative to $(e_1, e_2)$.}  
	\label{Fig:Univ_Lotus} 
\end{figure}
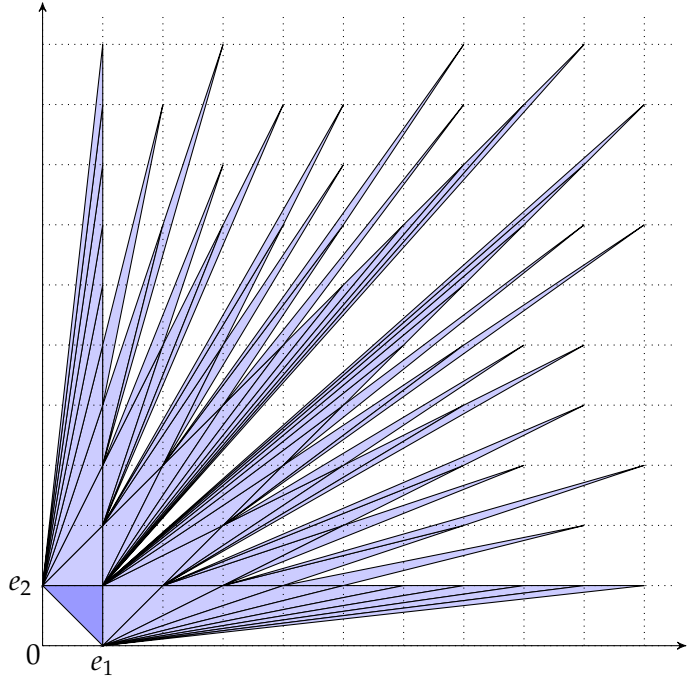

\begin{defi} \label{Def:lotus}
Let  $N$ be a lattice of rank $2$ with basis $(e_1,e_2)$. 
\begin{enumerate}
\item For $\lambda \in \QQ_{> 0}$ its \emph{lotus} $\Lambda(\lambda)$ is the union of petals of the universal lotus $\Lambda(e_1,e_2)$ which intersect the ray 
of slope $\lambda$. For $\lambda=0$ or $\lambda=\infty$, we set $\Lambda(\lambda):=[e_1,e_2]$.
\item Let $\mc{E}$ be a finite subset of $\QQ_{\geq 0} \cup \{ \infty \} $.
 The \emph{lotus} of $ \mc{E} $, denoted by $\Lambda(\mc{E})$, is the union $\bigcup_{\lambda \in \mc{E}} \Lambda(\lambda)$. If $\Sigma=\Sigma(\mc{E})$ is a Newton fan, its lotus is defined as $\Lambda(\Sigma):=\Lambda(\mc{E})$. Sometimes, a lotus of a Newton fan is called a \emph{Newton lotus}.
\end{enumerate}
\end{defi}

In the present article, we will only consider Newton lotuses, for more general lotuses with several petals see \cite{Lotus}.

From the lotus one can read off the resolution graph of a plane curve singularity (see Section~\ref{Sub:LotusGraph}) as its lateral boundary. For this we need some more notation: 

\begin{defi}
Let $\Lambda$ be a Newton lotus. If $\Lambda=[e_1, e_2]$, then set $\partial_+\Lambda:=[e_1, e_2]$, and if $\Lambda \neq [e_1, e_2]$, then let $\partial_+\Lambda$ be the compact and connected polygonal line that is the complement of the open segment $(e_1,e_2)$ in the boundary of the lotus $\Lambda$. The polygonal line $\partial_+ \Lambda \subseteq \Lambda$ is called the \emph{lateral boundary} of the lotus $\Lambda$. \\
If $\lambda \in [0 , \infty]$, then one can associate to it a unique point on $\partial_+\Lambda$ with slope $\lambda$, denoted by $p_\Lambda(\lambda)$. For a Newton lotus $\Lambda=\Lambda(\mc{E})$, where  $\mc{E} \subseteq \QQ_{\geq 0} \cup \{ \infty \}$ is some finite set, for any $\lambda \in \mc{E}$ the point $p_{\Lambda(\mc{E})}(\lambda)$ is called the \emph{marked point of $\lambda$} in $\Lambda(\mc{E})$. The lotus $\Lambda(\mc{E})$ together with its marked points $p_{\Lambda(\mc{E})}(\lambda)$ for all $\lambda \in \mc{E}$ is called a \emph{marked lotus}. \\
We call a vertex different from $e_1$ and $e_2$ a \emph{pinching point} of $\Lambda$ if it belongs to only one petal of it. Moreover, if $\Lambda \neq [e_1, e_2]$, then the lattice point connected to $e_1$ (resp.~$e_2$) in the lateral boundary $\partial_+\Lambda$ is called the \emph{first interior point} (resp.~\emph{last interior point}) of the lateral boundary. The vertices $e_1$ and $e_2$ of $\Lambda$ are called \emph{basic} vertices, and lattice points contained in the lateral boundary of $\Lambda$ are called \emph{lateral vertices}. Note that the lateral vertices of $\Lambda$ are given as the vertices $\Lambda \cap N$, which are not basic.
\end{defi}

Marked points make it possible to distinguish lotuses:

\begin{example}
	\label{Ex:marked_unmarked}
Consider the Newton lotuses $\Lambda_i=\Lambda(\mc{E}_i)$ with $\mc{E}_1=\{ \frac{3}{2}\}$ and $\mc{E}_2=\{ \frac{3}{2}, \frac{2}{1}, \frac{1}{1} \}$. As unmarked lotuses, we have $\Lambda_1=\Lambda_2$, but $\Lambda_1$ has only one marked point (the pinching point $(2,3)$), whereas $\Lambda_2$ has three marked points.
In Fig.~\ref{Fig:lotus_marked}, we illustrate the difference by drawing the marked points.

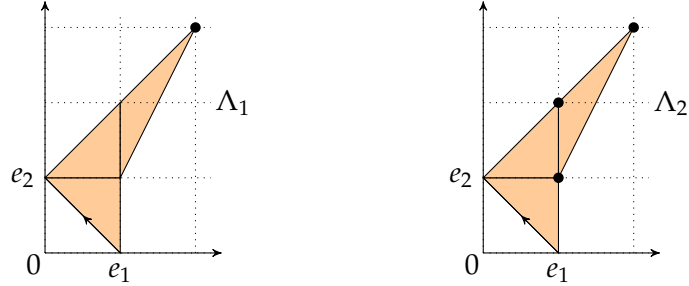
\begin{figure}[h!]
	\begin{center}
		\begin{tikzpicture}[scale=1]
			
			\filldraw [fill=orange!40,draw=black] (1,0) -- (0,1) -- (1,1) -- cycle;
			\filldraw [fill=orange!40,draw=black] (0,1) -- (1,1) -- (1,2) -- cycle;
			\filldraw [fill=orange!40,draw=black] (1,1) -- (1,2) -- (2,3) -- cycle;
			
			\draw [->] (1,0)--(0.5, 0.5);
			\draw [-] (0.5, 0.5)--(0,1);
			
			\draw [dotted] (0,0) grid (2.2,3.2);
			\draw [->] (0,0) -- (2.35,0);
			\draw [->] (0,0) -- (0,3.35);

			\node [below] at (1,0) {$ e_1 $}; 
			\node [left] at (0,1) {$ e_2 $}; 
			\node at (-0.15,-0.15) {$ 0 $};

			\node at (2.5,2) {$ \Lambda_1 $};
			
			\draw (2,3) node[fill=black,circle,inner sep=0.049cm] {} circle (0.05cm);

		\end{tikzpicture}
		\ \hspace{2cm} \ 
		\begin{tikzpicture}[scale=1]
			
			\filldraw [fill=orange!40,draw=black] (1,0) -- (0,1) -- (1,1) -- cycle;
			\filldraw [fill=orange!40,draw=black] (0,1) -- (1,1) -- (1,2) -- cycle;
			\filldraw [fill=orange!40,draw=black] (1,1) -- (1,2) -- (2,3) -- cycle;
			
			\draw [->] (1,0)--(0.5, 0.5);
			\draw [-] (0.5, 0.5)--(0,1);
			
			\draw [dotted] (0,0) grid (2.2,3.2);
			\draw [->] (0,0) -- (2.35,0);
			\draw [->] (0,0) -- (0,3.35);

			\node [below] at (1,0) {$ e_1 $}; 
			\node [left] at (0,1) {$ e_2 $}; 
			\node at (-0.15,-0.15) {$ 0 $};
			
			\draw (2,3) node[fill=black,circle,inner sep=0.049cm] {} circle (0.05cm);	        
			
			\draw (1,1) node[fill=black,circle,inner sep=0.049cm] {} circle (0.05cm);

			\draw (1,2) node[fill=black,circle,inner sep=0.049cm] {} circle (0.05cm);

			\node at (2.5,2) {$ \Lambda_2 $};

		\end{tikzpicture}
	\end{center}
	\caption{Two lotuses that only differ by their marked points, which are marked as bullet points.}  
	\label{Fig:lotus_marked} 
	\end{figure}
\end{example}

Thus we arrive at the notion of the \emph{Newton lotus} of $f$: 
Let $(S,s)$ be a smooth surface germ. Choose local coordinates $( x,y ) $ at $s$ (so they define a cross $(L,L')$) and let $f \in \mc{O}_{S,s} \cong \CC\{x,y\}$ be defining a curve germ $(C,s)$. Consider the lattices $N=N_{L,L'}$ with basis $(e_1, e_2)$ and $M=N^\vee$ and the cone $\sigma_0=\langle e_1 , e_2 \rangle_{\RR_{\geq 0}}$ in $N_{\RR}$. Then the exponents of the monomials of $f$ define the Newton polyhedron   $\mc{N}(f) \subseteq \sigma_0^\vee \subseteq M_{\RR}$. The orthogonal rays $w_K$ to the compact edges $K$ of $\mc{N}(f)$ subdivide the cone $\sigma_0$ and yield the Newton fan $\Sigma(f)$. 
These rays give us the set of slopes 
$$\mc{E}=\{ \lambda \in \QQ_+: \lambda \text{ is the slope of some } w_K \} \cup \{ 0, \infty \}\ . $$
 Then we  associate the Newton lotus $\Lambda(f):=\Lambda(\mc{E})$ to $f$. In Thm.~\ref{Thm:Lotus-dual} we will see that for Newton non-degenerate $f$ this lotus encodes the dual resolution graph of $C$.
 
 \begin{example}
 Let $f= x^3-y^2$, then $f$ defines the cusp $C$. We have $\Lambda(f)=\Lambda( \frac{3}{2})$. For $g=x^6+x^4y+xy^3+y^4$ we get $\Lambda(g)=\Lambda( \frac{3}{2}, \frac{2}{1}, \frac{1}{1})$. Computing the minimal resolution of singularities of both curves, one sees that the dual resolution graph in both cases is an $A_3$-diagram with the same self-intersection numbers. However, $V(g)$ is locally reducible, so we can distinguish it by drawing two additional arrowheads corresponding to the marked points $ p_{\Lambda(g)} (\frac{1}{1}) $ and $ p_{\Lambda(g)} (\frac{2}{1}) $. 
 \end{example}

\begin{Bem}
	\label{Rk:charts}
	One may interpret a lotus as a sequence of blowups of points. 
		The base $ [e_1, e_2] $ represents the initial situation, 
		the segment $ [e_1, e_1+e_2] $ corresponds to the $ Y $-chart, while $ [e_2, e_1+ e_2] $ is the $ X $-chart. 
		This can be iterated so that any newly added triangle in a lotus can be interpreted as a point blowup. 
		Using the orientation on the edges the charts can be assigned to the new edges.  
		\\
		Indeed, it may be observed that the lotus $ \Lambda_1 $ in Fig.~\ref{Fig:lotus_marked}
		which corresponds to the cusp defined by the vanishing locus of $ x^3 - y^2 $ 
		describes the embedded resolution
		of the curve given by $ xy (x^3 - y^2) = 0 $ (cf.~Example~\ref{Ex:Resol_cusp}).
		Notice that the edges of the lateral boundary of $ \Lambda_1 $ corresponds to the final charts of the desingularization.	
\end{Bem}

\subsection{Lotus of a toric resolution of a Newton non-degenerate curve and the dual resolution graph}
\label{Sub:LotusGraph}

In this section we briefly recall toric resolutions of curves by subdivision of fans and how to obtain the lotus and the dual resolution graph.

Recall that a cone $\sigma$ in a lattice $N$ is \emph{regular} if it can be generated by a subset of a basis of $N$. Consequently, a fan $\Sigma$ is \emph{regular} if all its cones are regular. Here note that we set $\langle \varnothing \rangle_{\RR_{\geq 0} }:=\{ 0 \}$, so that $\{ 0 \}$ is a regular cone. 

For $2$-dimensional lattices $N$, there is a minimal regular subdivision for any fan $\Sigma$ in $N$, that is, any other regular subdivision refines it, see \cite[Prop.~1.19]{OdaConvexBodies}. 
Thus for a $2$-dimensional fan $\Sigma$ in $N$, define the \emph{regularization }$\Sigma^{\reg}$ 
as the minimal regular subdivision of $\Sigma$.

The regularization  $\sigma^\reg$ of a $2$-dimensional strictly convex cone $\sigma$ in a lattice $N$ of rank $2$ is obtained by looking at the compact faces of the boundary of the convex hull of $(\sigma \cap N)\backslash \{ 0 \}$ (see \cite[Prop.~1.19]{OdaConvexBodies}). Then use the rays given by the primitive integral vectors on this boundary to subdivide the cone. 
For a fan $\Sigma$, its regularization is given as the union of the regularizations of its cones. 

\begin{example} \label{Ex:cuspfan} 
	Let $\Sigma(\frac{3}{2})$ be the Newton fan of Examples~\ref{Ex:3durch2} and~\ref{Ex:cusp}. 
	Its regularization is given by introducing the additional rays through $p(\frac{1}{1})$ and $p(\frac{2}{1})$. 
	Then $\Sigma^\reg(\frac{3}{2})$ consists of four $2$-dimensional cones as depicted in Fig.~\ref{Fig:regfancusp}

	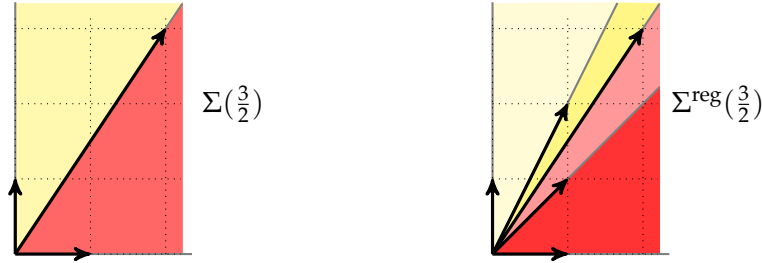
\begin{figure}[h!]
		\begin{center}
			
			\begin{tikzpicture}[scale=1]

				\filldraw [fill=yellow!40,draw=yellow!40] (0,0) -- (2.22,3.33) -- (0,3.33) -- cycle;
				
				\filldraw [fill=red!60,draw=red!60] (0,0) -- (2.22,3.33) -- (2.22,0) -- cycle;

				\draw [-, black!50, thick] (0,0) -- (2.35,0);
				\draw [->, very thick] (0,0) -- (1,0);
				\draw [-, black!50, thick] (0,0) -- (0,3.35);
				\draw [->, very thick] (0,0) -- (0,1);
				\draw [-, black!50, thick] (0,0) -- (2.22,3.33);
				\draw [->, very thick] (0,0) -- (2,3);

				\draw [dotted] (0,0) grid (2.2,3.2);

				\node at (2.9,2) {$ \Sigma (\frac{3}{2}) $};

			\end{tikzpicture}
			\ \hspace{2.5cm} \
			\begin{tikzpicture}[scale=1]

				\filldraw [fill=yellow!20,draw=yellow!20] (0,0) -- (1.66,3.34) -- (0,3.33) -- cycle;
				
				\filldraw [fill=yellow!60,draw=yellow!60] (0,0) -- (2.22,3.33)  -- (1.66,3.34)  -- cycle;
				
				\filldraw [fill=red!40,draw=red!40] (0,0) -- (2.22,3.33) --  (2.22,2.22) -- cycle;
				
				\filldraw [fill=red!80,draw=red!80] (0,0) --  (2.22,2.22) -- (2.22,0) -- cycle;
				
				\draw [-, black!50, thick] (0,0) -- (2.35,0);
				\draw [->, very thick] (0,0) -- (1,0);
				\draw [-, black!50, thick] (0,0) -- (0,3.35);
				\draw [->, very thick] (0,0) -- (0,1);
				\draw [-, black!50, thick] (0,0) -- (2.22,3.33);
				\draw [->, very thick] (0,0) -- (2,3);

				\draw [-, black!50, thick] (0,0) -- (2.22,2.22);
				\draw [->, very thick] (0,0) -- (1,1);
				
				\draw [-, black!50, thick] (0,0) -- (1.66,3.34);
				\draw [->, very thick] (0,0) -- (1,2);
				
				\draw [dotted] (0,0) grid (2.2,3.2);

				\node at (3,2) {$ \Sigma^\reg (\frac{3}{2}) $};
				
			\end{tikzpicture}	
		\end{center}
		\caption{The fan $ \Sigma(\frac{3}{2}) $ (left) and its regularization $ \Sigma^\reg (\frac{3}{2}) $ (right).}  
		\label{Fig:regfancusp} 
	\end{figure} 
\end{example}

To any cone $\sigma$ (resp.~fan $\Sigma$) one can associate the \emph{(affine) toric variety} $X_\sigma=\Spec(\CC[\sigma^\vee \cap M])$ (resp. the \emph{(projective) toric variety} $X_\Sigma$), see e.g.~\cite[Section 1.3.2]{Lotus} for details. 
One can further define toric morphisms and modifications, we refer to \cite[Section 3.3]{Lotus}, 
and one can show that a subdivision of the cone $\sigma_0$ given by a fan $\Sigma$ yields a equivariant birational morphism $\psi^{\Sigma}_{\sigma_0} \colon X_{\Sigma} \xrightarrow{} X_{\sigma_0} $. 
The preimage of $0 \in X_{\sigma_0}$ is the exceptional divisor of $\psi^\Sigma_{\sigma_0}$. 
Furthermore, one obtains the minimal resolution of a toric surface via (cf. \cite[Prop.~3.28]{Lotus}): For a non-regular cone $\sigma$ in the rank $2$ lattice $N$, the toric modification $\psi^{\sigma^\reg}_{\sigma} \colon X_{\sigma^\reg} \xrightarrow{} X_\sigma$ is the minimal resolution of the affine toric surface $X_\sigma$. Consequently, for any fan $\Sigma$ in $N$, the toric modification $\psi^{\Sigma^\reg}_{\Sigma} \colon X_{\Sigma^\reg} \xrightarrow{} X_\Sigma$ is the minimal resolution of singularities of $X_\Sigma$. 

Coming back to curves, one can obtain a resolution of a Newton non-degenerate curve $C$ (cf.~Def.~\ref{Def:nnd}) on a complex surface $S$ by a toric modification. 
However, to be precise, one has to work with \emph{toroidal varieties and modifications in the toroidal category} (see \cite[Section 3.4]{Lotus}). The objects in this category are defined as follows: a \emph{toroidal variety} is a pair $(S, \partial S)$, where $S$ is a normal complex variety and $\partial S$ is a reduced divisor on $S$, such that the germ of $(S, \partial S)$ at any point $s \in S$ is locally analytically isomorphic to  $(X_\sigma, \partial X_\sigma)$, the germ of an affine toric variety $X_\sigma$ and its boundary $\partial X_\sigma$. A morphism $\psi\colon (S_2, \partial S_2) \xrightarrow{} (S_1, \partial S_1)$ between toroidal varieties is a complex analytic morphism $\psi: S_2 \xrightarrow{} S_1$ such that $\psi^{-1}(\partial S_1) \subseteq \partial S_2$. Such a morphism is a toroidal modification if the underlying morphism $\psi$ is a modification as defined in \cite[Def.~2.31]{Lotus}.

Let  $(S,s)$ be the germ of a smooth complex surface. We choose a cross on $(L, L')$ on $(S,s)$ (giving us the coordinates $(x,y)$) and define the lattices $N_{L,L'}$, $M_{L,L'}$ and cone $\sigma_0$ as before. Any subdivision $\Sigma$ of $\sigma_0$ yields an analytic modification $\psi_{L,L'}^\Sigma \colon S_\Sigma \xrightarrow{} S$ of $S$. If we set $\partial S:=L + L'$ and $\partial S_\Sigma:=\psi^{-1}(L + L')$, then $\psi_{L,L'}^\Sigma \colon (S_\Sigma, \partial S_\Sigma) \xrightarrow{} (S, L + L')$ is a toroidal modification, called the \emph{modification of $S$ associated with $\Sigma$ relative to the cross $(L, L')$}.

For a curve $ C $ on $ (S,s) $,
the \emph{Newton modification of $S$ defined by $C$ relative to the cross $(L,L')$} is defined as $\psi_{L,L'}^C: (S_{\Sigma_{L,L'}(C)}, \partial S_{\Sigma_{L,L'}(C)}) \xrightarrow{} (S, L + L')$, the modification of $S$ associated with $\Sigma_{L,L'}(C)$ relative to the cross $(L,L')$, where $\psi_{L,L'}^C:=\psi_{L,L'}^{\Sigma_{L,L'}(C)}$,
for $ \Sigma_{L,L'}(C) := \Sigma (f) $ the Newton fan of $ C $ relative to the cross $ (L,L') $ and $ f \in \mathcal{O}_{S,s} $ is a local equation for $ C  $ at $ s $.
One also denotes the strict transform of $C$ under $\psi_{L,L'}^C$ by $C_{L,L'}$.\\

A toroidal modification $\pi \colon (S_{\Sigma}, \partial S_{\Sigma}) \xrightarrow{} (S, L+L')$  
is called a \emph{toroidal pseudo-resolution} if 
\begin{enumerate}
\item the boundary $\partial S_{\Sigma}$ of $S_{\Sigma}$ contains the reduction of the total transform $\pi^*(C)$ of $C$;
\item the strict transform of $C$ under $\pi$ does not contain singular points of $S_\Sigma$.
\end{enumerate}
If moreover $S_\Sigma$ is smooth, then $\pi$ is called a \emph{toroidal embedded resolution}. 

In \cite[Algorithm 4.22]{Lotus} an algorithm for a toroidal pseudo-resolution is given. It is also explained how to get an embedded toroidal resolution from a toroidal pseudo-resolution. We mostly care about the special case of Newton non-degenerate curves, so we will first introduce this notion and then say more about the algorithm.

\begin{defi} \label{Def:nnd}
Let $(L,L')$ be a cross in $S$ and let $C=V(f)$ be a curve in $S$. Here $f \in \CC\{x,y\}$ with coordinates $(x,y)$ given by the cross. Then $f$ is called \emph{Newton non-degenerate} if all the restrictions $f_K$ of $f$ to the compact edges $K$ of the Newton polyhedron $\mc{N}_{L,L'}(f)$ define smooth curves on the torus $(\CC^*)^2_{x,y}$. Here we write $(\CC^*)^2_{x,y}$ for the torus $(\CC^*)^2$ to reflect the choice of cross $(x,y)$.
\end{defi}

Note that $f$ is Newton non-degenerate exactly when $\Sigma_{L,L'}(f)$ already yields a toroidal pseudo-resolution of $C$,  
see \cite[Prop.~4.20]{Lotus}. Further, the regularization $\Sigma^{\reg}_{L,L'}(f)$ yields the minimal embedded resolution in this case, see \cite[Prop.~4.29]{Lotus}. 

\begin{example}
We continue with $f=x^3-y^2$, that is $C=V(f)$. Choosing the cross $L=V(x), L'=V(y)$ we have $\Sigma:= \Sigma_{L,L'}(C)=\langle e_1, 2e_1+3e_2, e_2 \rangle_{\RR_{\geq 0}}$. We calculate the strict transform $C_{L,L'}$ under $\pi\colon (S_\Sigma, \partial S_\Sigma) \xrightarrow{} (S, L+L')$: the Newton fan consists of two $2$-dimensional cones, that correspond to the charts $k[\frac{x^3}{y^2},y]$ and $k[x, \frac{y^2}{x^3}]$. In the first chart the total transform of $f$ is (with new coordinates $X=\frac{x^3}{y^2}$ and $Y=y$) $f'=XY^2-Y^2=Y^2(X-1)$. 
The strict transform is smooth and has normal crossings with the exceptional divisor. Similarly, in the other chart, the total transform is given as $f'=X'^3-X'^3Y'=X'^3(1-Y')$ (with coordinates $X'=x, Y'=\frac{y^2}{x^3}$) , so again the strict transform is smooth and has normal crossings with the exceptional divisor. Thus $\pi$ is a toroidal pseudo-resolution, which is not an embedded resolution, since $\Sigma$ is not regular. \\
An embedded toroidal resolution is obtained by the regularization $ \Sigma^\reg $ of  Example~\ref{Ex:cuspfan}. 
Since it is a subdivision of $ \Sigma $, the strict transforms are all smooth and have normal crossings with the exceptional divisor.
\end{example}

Let $f \in \CC\{x,y\}$ be Newton non-degenerate and denote by $\Lambda(f)$ its Newton lotus. By \cite[Thm.~5.29]{Lotus} the dual resolution graph of the minimal resolution of $V(f)$ can be read off the lateral boundary $\partial_+\Lambda(f)$:

\begin{Thm}[Lotus -- dual resolution graph correspondence]
	\label{Thm:Lotus-dual}
Let $C=V(f)$ be the germ of a Newton non-degenerate reduced curve singularity in the smooth complex surface $(S,s)$. 
Let $\pi\colon (S_{\Sigma}, \partial S_{\Sigma}) \xrightarrow{} (S, L+L')$ be a toroidal pseudo-resolution as above and let $ \pi^\reg \colon (S^\reg_{\Sigma}, \partial S^\reg_{\Sigma}) \xrightarrow{} (S, L+L')$ be the corresponding minimal embedded resolution of $ C $ (obtained by taking the coarsest refinement of the Newton fan $\Sigma(f)$) and let $\Lambda(f)$ be the corresponding Newton lotus. Then
\begin{enumerate}
\item The basic vertices $e_1$ and $e_2$ of $\Lambda(f)$ represent the branches $L$ and $L'$. The lateral vertices of $\Lambda(f)$ correspond to the irreducible components $E_k$ of the exceptional divisor $(\pi^{\reg})^{-1}(s)=\bigcup_{k=1}^mE_k$.
\item The lotus $\Lambda(f)$ corresponds to a triangulated $(m+2)$-gon $P$.
\item The lateral boundary $\partial_+\Lambda(f)$ is the dual graph of the boundary divisor $\partial S_{\Sigma}^{\reg}$. 
 The self-intersection number $E_k^2$ is given by the opposite of the number of triangles in $P$ incident to the vertex $E_k$.
\end{enumerate}
\end{Thm}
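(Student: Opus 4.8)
The plan is to read off every piece of the statement from the minimal embedded resolution $\pi^\reg$ in its toric incarnation, and then to reconcile the toric bookkeeping with the combinatorics of the petals of $\Lambda(f)$. Since $f$ is Newton non-degenerate, the Newton fan $\Sigma(f)$ already yields a toroidal pseudo-resolution and its regularization $\Sigma^\reg(f)$ yields the minimal embedded resolution by \cite[Prop.~4.20 and Prop.~4.29]{Lotus}, which by \cite[Prop.~3.28]{Lotus} is the minimal resolution in the toric sense. First I would list the rays of $\Sigma^\reg(f)$ ordered by increasing slope, $\langle e_1\rangle = \rho_0, \rho_1, \ldots, \rho_m, \rho_{m+1} = \langle e_2\rangle$, with primitive generators $v_0 = e_1, v_1, \ldots, v_m, v_{m+1} = e_2$. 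The standard toric dictionary over a two-dimensional cone (see \cite{CLS}) then gives the first part of (1): the two boundary rays carry the strict transforms of $L=V(x)$ and $L'=V(y)$, the $m$ interior rays $\rho_1, \ldots, \rho_m$ carry the irreducible exceptional components $E_1, \ldots, E_m$, and $E_k$ meets $E_{k'}$ in $S^\reg_\Sigma$ exactly when $\rho_k,\rho_{k'}$ span a two-dimensional cone, i.e.\ when $|k-k'|=1$.

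The next step is to transport this to the lotus. Using the description of $\Lambda(f)=\Lambda(\mc{E})$ as the union of the petals meeting the rays with slopes in $\mc{E}$, I would check that the apexes of its petals are precisely the primitive lattice points produced by the minimal regular subdivision, so that the lateral vertices of $\Lambda(f)$, read along $\partial_+\Lambda(f)$ from $e_1$ to $e_2$, are exactly $v_1, \ldots, v_m$ in this order. This yields the vertex correspondence in (1) and shows that $\partial_+\Lambda(f)$, the polygonal line through $e_1, v_1,\ldots,v_m,e_2$ whose edges record precisely the adjacencies $|k-k'|=1$, is the dual graph of the total transform $\partial S^\reg_\Sigma = (\pi^\reg)^{-1}(L+L')$, establishing the first half of (3). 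For (2) I would note that the petals are unimodular triangles triangulating the region bounded by the base $[e_1,e_2]$ and $\partial_+\Lambda(f)$; its boundary vertices are the $m+2$ points $e_1,v_1,\ldots,v_m,e_2$, so $\Lambda(f)$ is a triangulated $(m+2)$-gon, to which Thm.~\ref{Thm:CoCo} applies.

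It remains to prove the self-intersection formula, which I regard as the heart of the statement. On the toric side, smoothness of the two cones adjacent to $\rho_k$ yields a unique positive integer $a_k$ with $v_{k-1}+v_{k+1}=a_k v_k$, and $E_k^2=-a_k$ by the self-intersection formula for toric surfaces (see \cite{CLS}). I would then prove, as a separate combinatorial lemma, that $a_k$ equals the number of petals of $\Lambda(f)$ incident to $v_k$. The cleanest route is an induction on the number of petals, using the interpretation of each petal as a point blow-up (Remark~\ref{Rk:charts}): adding a petal $\delta(a,b)$ with apex $a+b$ along the edge $[a,b]$ is the blow-up of the point where the two components attached to $a$ and $b$ meet. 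This creates a new component of self-intersection $-1$ lying in exactly one petal, and decreases both self-intersections attached to $a$ and $b$ by one while enlarging the star of each of $a$ and $b$ by exactly one petal. Hence the invariant ``number of incident petals $=-(\text{self-intersection})$'' is preserved at every blow-up, and it holds in the base case (the base petal $\delta(e_1,e_2)$, the first blow-up at $s$, gives a component of self-intersection $-1$ lying in one petal). Equating $a_k=-E_k^2$ with the incident-petal count then finishes (3).

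The main obstacle is precisely this lemma, i.e.\ matching the purely combinatorial petal count with the geometric self-intersection. The delicate points are to justify that the petals of $\Lambda(f)$ can be added in an order realizing a genuine sequence of blow-ups of intersection points of already-present components, which is exactly what the parent relation on petals encodes and what the toroidal interpretation of $\Sigma^\reg(f)$ supplies, and to verify that each such blow-up alters the incidence counts and the self-intersection numbers in lockstep, so that the two quantities, equal at the outset, stay equal throughout. Once this is in place, all three items follow from the toric dictionary together with the combinatorial lemma.
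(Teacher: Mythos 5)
Your proposal is correct in its overall architecture, but it takes a genuinely different route from the paper for the simple reason that the paper does not prove this theorem at all: its proof is the single line ``See \cite[Thm.~5.29]{Lotus}.'' What you have written is essentially a self-contained reconstruction of the argument underlying that citation, specialized to Newton lotuses. The two nontrivial inputs you isolate are exactly the right ones. First, the identification of the apexes of the petals of $\Lambda(\mc{E})$ with the primitive generators of the rays of the minimal regular subdivision $\Sigma(\mc{E})^{\reg}$; you flag this as ``to be checked'' rather than proving it, and it is a genuine lemma (it follows from the Stern--Brocot/Farey mediant description of the chain of petals $\Lambda(\lambda)$ together with the characterization of $\sigma^{\reg}$ via the boundary of the convex hull of $(\sigma\cap N)\setminus\{0\}$ from \cite[Prop.~1.19]{OdaConvexBodies}), so in a written-out version this would need its own short proof. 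Second, the identity $v_{k-1}+v_{k+1}=a_kv_k$ with $E_k^2=-a_k$ on the toric side versus the count of incident petals on the lotus side; your blow-up induction (base petal gives a $(-1)$-curve in one petal; each added petal $\delta(a,b)$ blows up the intersection point of the components at $a$ and $b$, dropping both self-intersections by one while raising both incidence counts by one) is the standard and correct way to close this, and the parent-stability in the definition of a lotus is precisely what legitimizes the ordering of the blow-ups. One minor imprecision: the ``dual graph of the boundary divisor $\partial S_{\Sigma}^{\reg}$'' also involves the strict transform of $C$ (attached at the marked points), which your sketch ignores; for the unmarked statement as used in the rest of the paper this does not matter. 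Compared with simply citing \cite{Lotus}, your route buys a transparent, elementary proof in the Newton non-degenerate case at the cost of two lemmas the cited reference establishes in greater generality.
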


\begin{proof}
See \cite[Thm.~5.29]{Lotus}.
\end{proof}

\begin{example}[Lotus associated to a continued fraction $\lb b_1, \ldots, b_r \rb$] \label{Ex:lotus-continuedfraction}
Here we explicitly give the lotuses for continued fractions: they are characterized by having precisely one pinching point. Equivalently, the triangulated polygon $P$ corresponding to such a lotus has precisely two ears, where one of them is at the vertex $(1,0)$ of the base petal (this comes from our convention $\frac{n}{q} \geq 1$). 
Let $\frac{n}{q}=\lb b_1, \ldots, b_r \rb$ and $\frac{n}{n-q}=\lb b_1', \ldots, b_s' \rb$. The continued fractions are related by the triangulated polygon of Fig.~\ref{Fig:Kidoh-duality}. The quiddity sequence of the polygon corresponding to $\Lambda(\frac{n}{q})$ is 
$( b_1, \ldots, b_r, 1, b_s', b_{s-1}', \ldots, b_1',1 )$ and the dual resolution graph of the curve $C=V(x^n-y^q)$ is 
shown in Fig.~\ref{fig:GammaofContFrac}.
  
    \begin{figure}[h!]
    \begin{center}
\begin{tikzpicture}[scale=0.8]
	
\draw [->, color=black, thick] (6,0)--(6,1);
\draw [-, color=black, thick](0,0) -- (2.4,0);
\draw [-, color=black, dotted](2.4,0) -- (3.6,0);
\draw [-, color=black, thick](3.6,0) -- (10.4,0);
\draw [-, color=black, dotted](10.4,0) -- (11.6,0);
\draw [-, color=black, thick](11.6,0) -- (12,0);

\node [below] at (0,0) {{$-b_2$}};
\node [below] at (2,0) {{$-b_3$}};
\node [below] at (4,0) {{$-b_r$}};
\node [below] at (6,0) {$-1$};
\node [below] at (8,0) {{$-b_s'$}};
\node [below] at (10,0) {{$-b_{s-1}'$}};
\node [below] at (12,0) {{$-b_1'$}};

\node[draw,circle, inner sep=1.pt,fill=black] at (0,0){};
\node[draw,circle, inner sep=1.pt,fill=black] at (2,0){};
\node[draw,circle, inner sep=1.pt,fill=black] at (4,0){};
\node[draw,circle, inner sep=1.pt,fill=black] at (6,0){};
\node[draw,circle, inner sep=1.pt,fill=black] at (8,0){};
\node[draw,circle, inner sep=1.pt,fill=black] at (10,0){};
\node[draw,circle, inner sep=1.pt,fill=black] at (12,0){};

\node [above] at (6,1) {$V(x^{n} - y^q)'$};

\end{tikzpicture}
\end{center}
 \caption{The dual resolution graph of $C=V(x^{n}-y^q)$.}
\label{fig:GammaofContFrac}
\end{figure}
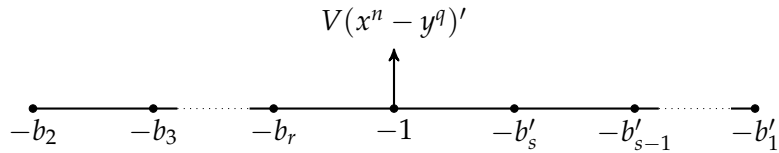
For our running example $\frac{n}{q}=\frac{11}{8}=\lb 2,2,3,2 \rb$ there is a schematic picture of the corresponding lotuses $\Lambda(\frac{n}{q})$ and $\Lambda(\frac{n}{n-q})$ in Fig.~\ref{Fig:11_8}.
Observe that $\Lambda(\frac{11}{8}) $ is one way to embed the triangulated polygon of Fig.~\ref{Fig:running} into the universal lotus $ \Lambda(e_1, e_2) $.

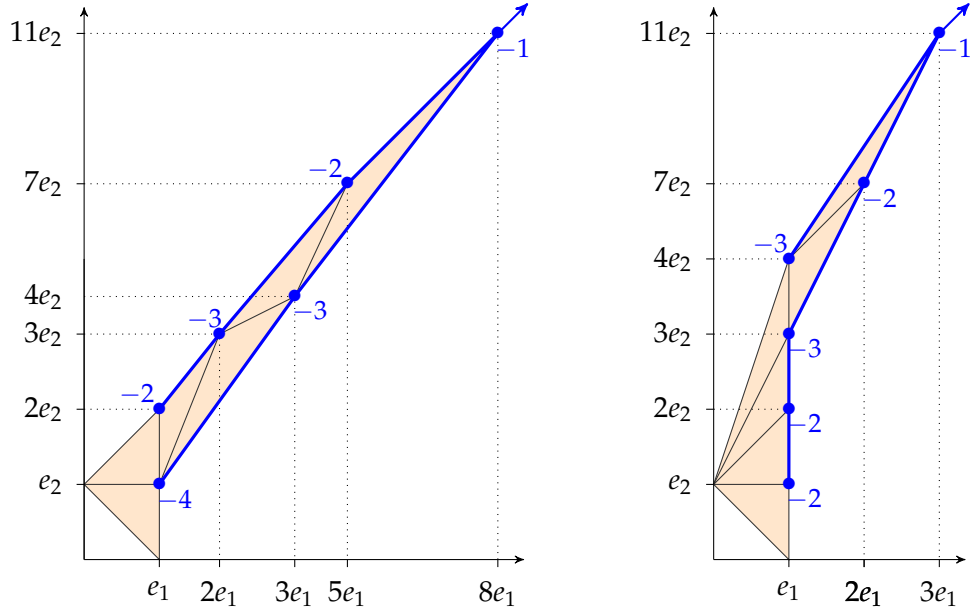
\begin{figure}[h!]
	\begin{center}
		\begin{tikzpicture}[scale=1]
			\filldraw [fill=orange!20,draw=black!80] 
			(1,0) --
			(1,1) --
			(2.8,3.5) -- 
			(5.5,7) -- 
			(3.5,5) -- 
			(1.8,3) -- 
			(1,2) -- 
			(0,1) --
			cycle;
			
			\draw[black!80] (0,1) -- (1,1);
			\draw[black!80] (1,1) -- (1,2);
			\draw[black!80] (1,1) -- (1.8,3);
			\draw[black!80] (2.8,3.5) -- (1.8,3);
			\draw[black!80] (2.8,3.5) -- (3.5,5);

			\draw [->] (0,0) -- (5.85,0);
			\draw [->] (0,0) -- (0,7.35);
			\draw (0,0) -- (0,4);

			\draw  (0,7) -- (-0.1,7);
			\node [left] at (-0.15, 7) {$ 11 e_2 $};
			\draw[dotted] (0,7) -- (5.5,7) -- (5.5,0);
			\draw  (5.5,0) -- (5.5,-0.1);
			\node [below] at (5.5,-0.15) {$ 8e_1 $};

			\draw  (0,5) -- (-0.1,5);
			\node [left] at (-0.15, 5) {$ 7 e_2 $};
			\draw[dotted] (0,5) -- (3.5,5) -- (3.5,0);
			\draw  (3.5,0) -- (3.5,-0.1);
			\node [below] at (3.5,-0.15) {$ 5e_1 $};
			
			\node [left] at (-0.15, 3.5) {$ 4 e_2 $};
			\draw[dotted] (0,3.5) -- (2.8,3.5) -- (2.8,0);
			\draw  (2.8,0) -- (2.8,-0.1); 
			\node [below] at (2.8,-0.15) {$ 3e_1 $};
			
			\draw  (0,3) -- (-0.1,3);
			\node [left] at (-0.15, 3) {$ 3 e_2 $};
			\draw[dotted] (0,3) -- (1.8,3) -- (1.8,0);
			\draw  (1.8,0) -- (1.8,-0.1);
			\node [below] at (1.8,-0.15) {$ 2e_1 $};

			\draw  (0,2) -- (-0.1,2);
			\node [left] at (-0.15, 2) {$ 2 e_2 $};
			\draw[dotted] (0,2) -- (1,2);
			
			\draw  (0,1) -- (-0.1,1);
			\node [left] at (-0.15, 1) {$ e_2 $};

			\draw  (1,0) -- (1,-0.1);
			\node [below] at (1,-0.15) {$ e_1 $};

			\draw [very thick, blue] (1,1) --
			(1,1) --
			(2.8,3.5) -- 
			(5.5,7) -- 
			(3.5,5) -- 
			(1.8,3) -- 
			(1,2) ;

			\node at (1,1) {\color{blue} $ \bullet $};
			\node at (1.2,0.8) {\color{blue} \small $ - 4 $};
			
			\node at (2.8,3.5) {\color{blue} $ \bullet $};
			\node at (3,3.3) {\color{blue} \small $ - 3 $};
			
			\node at (5.5,7) {\color{blue} $ \bullet $};
			\node at (5.7,6.8) {\color{blue} \small $ - 1 $};
			\draw[->, thick, blue] (5.5,7) -- (5.9,7.4);
			
			\node at (3.5,5) {\color{blue} $ \bullet $};
			\node at (3.2,5.2) {\color{blue} \small $ - 2 $};
			
			\node at (1.8,3) {\color{blue} $ \bullet $};
			\node at (1.6,3.2) {\color{blue} \small $ - 3 $};
			
			\node at (1,2) {\color{blue} $ \bullet $};	
			\node at (0.7,2.2) {\color{blue} \small $ - 2 $};
			 
		\end{tikzpicture}	
		\hspace{1cm}
		\begin{tikzpicture}[scale=1]
			\filldraw [fill=orange!20,draw=black!80] 
			(1,0) --
			(1,1) --
			(1,2) -- 
			(1,3) -- 
			(2,5) -- 
			(3,7) --
			(1,4) --
			(0,1) --
			cycle;
			
			\draw[black!80] (0,1) -- (1,1);
			\draw[black!80] (0,1) -- (1,2);
			\draw[black!80] (0,1) -- (1,3);
			\draw[black!80] (1,3) -- (1,4);
			\draw[black!80] (1,4) -- (2,5);
			
			\draw [->] (0,0) -- (3.35,0);
			\draw [->] (0,0) -- (0,7.35);

			\draw  (0,7) -- (-0.1,7);
			\node [left] at (-0.15, 7) {$ 11 e_2 $};	
			\draw[dotted] (0,7) -- (3,7) -- (3,0);			
			\draw  (3,0) -- (3,-0.1);
			\node [below] at (3,-0.15) {$ 3e_1 $};

			\draw  (0,5) -- (-0.1,5);
			\node [left] at (-0.15, 5) {$ 7 e_2 $};
			\draw[dotted] (0,5) -- (2,5) -- (2,0);
			\draw  (2,0) -- (2,-0.1);
			\node [below] at (2,-0.15) {$ 2e_1 $};

			\draw  (0,4) -- (-0.1,4);
			\node [left] at (-0.15, 4) {$ 4 e_2 $};
			\draw[dotted] (0,4) -- (1,4);

			\draw  (0,3) -- (-0.1,3);
			\node [left] at (-0.15, 3) {$ 3 e_2 $};
			\draw[dotted] (0,3) -- (1,3);
			
			\draw  (0,2) -- (-0.1,2);
			\node [left] at (-0.15, 2) {$ 2 e_2 $};
			\draw[dotted] (0,2) -- (1,2);
			
			\draw  (0,1) -- (-0.1,1);
			\node [left] at (-0.15, 1) {$ e_2 $};

			\draw  (1,0) -- (1,-0.1);
			\node [below] at (1,-0.15) {$ e_1 $};
			
			\draw  (2,0) -- (2,-0.1);
			\node [below] at (2,-0.15) {$ 2e_1 $};

			\draw [very thick, blue] (1,1) --
			(1,2) -- 
			(1,3) -- 
			(2,5) -- 
			(3,7) --
			(1,4) ;			
			
			\node at (1,1) {\color{blue} $ \bullet $};
			\node at (1.2,0.8) {\color{blue} \small $ - 2 $};
			
			\node at (1,2) {\color{blue} $ \bullet $};
			\node at (1.2,1.8) {\color{blue} \small $ - 2 $};
			
			\node at (1,3) {\color{blue} $ \bullet $};
			\node at (1.2,2.8) {\color{blue} \small $ - 3 $};
			
			\node at (2,5) {\color{blue} $ \bullet $};
			\node at (2.2,4.8) {\color{blue} \small $ - 2 $};
			
			\node at (3,7) {\color{blue} $ \bullet $};
			\node at (3.2,6.8) {\color{blue} \small $ - 1 $};
			\draw[->, thick, blue] (3,7) -- (3.4,7.4);
			
			\node at (1,4) {\color{blue} $ \bullet $};	
			\node at (0.75,4.2) {\color{blue} \small $ - 3 $};
		\end{tikzpicture}		
	\end{center}
	\caption{Schematic picture of the lotus $ \Lambda (\frac{11}{8}) $ 
		(on the left) and $ \Lambda (\frac{11}{3}) $ (on the right).
		The corresponding resolution graph is marked (in blue) as thick line with bullet points and with an additional arrow for the strict transform of $ V (x^{11} - y^8) $ resp.~$ V (x^{11} - y^3) $.}  
	\label{Fig:11_8} 
\end{figure}%
\end{example}

\begin{Bem} \label{Rmk:Riemenschneider}
 For a continued fraction $\lambda=\frac{n}{q}=\lb b_1, \ldots, b_r \rb$ one can associate the cyclic quotient surface $\CC^2/G=X_{n,q}$, where $G$ is the group $\frac{1}{q}(1,n)$, see e.g. \cite{ReidQuotientSing} for notation. In the associated frieze $\mc{F}(\lambda)$ one can easily read off the numbers used to describe the equations (the sequences ${\bf i}$ and ${\bf j}$ in \cite[Eqn.~(2),(3)]{Riemenschneider74}) as well as the indices of the special representations (the sequence ${\bf t}$ defined in \cite[Section 2]{Riemenschneider98}): the sequences ${\bf i}$ and ${\bf j}$ comprise the diagonal containing the entry $n$ in the frieze (this is the $(0,r+1)$-entry by Cor.~\ref{Cor:n_q_in_frieze}) and the ${\bf t}$ sequence is another half diagonal ending in $n$. Further, deformations can be interpreted as deleting one of the triangles in the triangulation of the associated polygon, and using the results in Section \ref{Sec:friezes-graphs}, one sees how the quiddity, i.e., the continued fraction expansion, changes. 
It would be interesting to study higher dimensional cyclic quotient singularities using a variant of friezes.
\end{Bem}

\section{Constructing dual resolutions graphs and lotuses from CC-friezes} \label{Sec:friezes-graphs}

We have seen in the previous sections that every lotus $\Lambda(C)$ for a Newton non-degenerate curve $C \subseteq S$ corresponds to a triangulated $(m+2)$-gon, where $m$ is the number of irreducible exceptional curves in the minimal resolution of $C$.  Hence one can associate a frieze to it. In this section we show that for any CC-frieze $\F$ there exists a Newton non-degenerate curve $C$ such that its dual resolution graph (that is, the lateral boundary 
$ \partial_+ \Lambda(C) $ of the corresponding lotus) is given by the quiddity sequence of $\F$.

\begin{lemma} \label{Lem:quidditybase}
	Let $\F$ be a CC-frieze of width $w$ and suppose the first $w+1$ terms
	$a_1, \ldots, a_{w+1} $ in the quiddity sequence of $\F$ are known. Then the remaining two elements $ a_{w+2}, a_{w+3} $ in the quiddity sequence are uniquely determined. 
\end{lemma}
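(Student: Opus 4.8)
The plan is to exploit the continuant description of the frieze entries in \eqref{eq:frieze_entry_cont} together with the closing conditions $(i,i+w+2)=1$, which record that the penultimate row of $\F$ consists of $1$s. These closing conditions, taken for two consecutive shifts, are exactly what forces the two missing quiddity entries. I would determine $a_{w+2}$ first and then $a_{w+3}$, in each case isolating the unknown as the leading variable in a single continuant identity and dividing by an entry that is guaranteed to be nonzero.

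First I would look at the entry $(1,w+3)$. Since its column difference is $w+2$, it lies in the penultimate row, so $(1,w+3)=1$, while \eqref{eq:frieze_entry_cont} gives $(1,w+3)=P_{w+1}(a_2,\ldots,a_{w+2})$. Applying the three-term recursion $P_{w+1}(a_2,\ldots,a_{w+2})=a_{w+2}\,P_w(a_2,\ldots,a_{w+1})-P_{w-1}(a_2,\ldots,a_w)$ isolates the unknown, because $P_w(a_2,\ldots,a_{w+1})=(1,w+2)$ and $P_{w-1}(a_2,\ldots,a_w)=(1,w+1)$ are frieze entries already determined by $a_1,\ldots,a_{w+1}$, and both are \emph{positive} integers as $\F$ is a CC-frieze. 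In particular $(1,w+2)\neq 0$, so
\[
a_{w+2}=\frac{1+(1,w+1)}{(1,w+2)}
\]
is uniquely determined.

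Next I would repeat the argument with the shifted closing condition $(2,w+4)=1$, the $i=2$ instance of $(i,i+w+2)=1$. Here $(2,w+4)=P_{w+1}(a_3,\ldots,a_{w+3})$, and the same recursion gives $1=a_{w+3}\,P_w(a_3,\ldots,a_{w+2})-P_{w-1}(a_3,\ldots,a_{w+1})$ with $P_w(a_3,\ldots,a_{w+2})=(2,w+3)$ and $P_{w-1}(a_3,\ldots,a_{w+1})=(2,w+2)$. The divisor $(2,w+3)$ now also involves $a_{w+2}$, but that value is known from the first step, and $(2,w+3)>0$ again by positivity of CC-frieze entries, so $a_{w+3}=\bigl(1+(2,w+2)\bigr)/(2,w+3)$ is likewise determined.

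The cleanest way to organize all of this conceptually is the matrix form. Writing $\mu(a)=\left(\begin{smallmatrix} a & -1 \\ 1 & 0\end{smallmatrix}\right)$, the continuant identities say that the entries of $\mu(a_1)\cdots\mu(a_k)$ are continuants of $a_1,\ldots,a_k$, and closedness of the frieze is equivalent to $\mu(a_1)\cdots\mu(a_{w+3})=-I$. Then $\mu(a_{w+2})\mu(a_{w+3})=-\bigl(\mu(a_1)\cdots\mu(a_{w+1})\bigr)^{-1}=:B$ is a fixed matrix, and since $\mu(a_{w+2})\mu(a_{w+3})$ has off-diagonal entries $-a_{w+2}$ and $a_{w+3}$, one simply reads off $a_{w+2}=-B_{12}$ and $a_{w+3}=B_{21}$, which is manifestly unique. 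The only point needing care in either formulation is the nonvanishing of the continuant by which one divides; this is the would-be obstacle, and it dissolves immediately from the strict positivity of CC-frieze entries. I emphasize that the statement asserts only uniqueness: existence of a frieze extending the prescribed initial segment is assumed, so there is no need to verify that the resulting values are positive integers or that the two closing conditions are mutually consistent.
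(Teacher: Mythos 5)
Your proof is correct, but it takes a genuinely different route from the paper's. The paper does not touch continuants at all: it computes the triangle of entries $m_{i,j}$, $0\le i$, $j\le w+2$, from the known quiddity values via the diamond rule, and then invokes Coxeter's glide-reflection symmetry $m_{i,j}=m_{j,i+w+3}$ to conclude that the two missing quiddity entries $a_{w+2}=m_{w+1,w+3}$ and $a_{w+3}=m_{w+2,w+4}$ are literally equal to the already-computed entries $m_{0,w+1}$ and $m_{1,w+2}$; no equation needs to be solved and no division occurs. You instead expand the closing conditions $(1,w+3)=1$ and $(2,w+4)=1$ as continuants via \eqref{eq:frieze_entry_cont}, peel off the unknown with the three-term recursion, and divide by an entry whose nonvanishing you correctly secure from the positivity of CC-frieze entries; your order of operations (first $a_{w+2}$, then $a_{w+3}$, the second divisor depending on the first answer) is handled properly, and your matrix reformulation via $\mu(a_1)\cdots\mu(a_{w+3})=-I$ is a standard equivalent packaging. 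The trade-off: the paper's argument is division-free and identifies \emph{where} in the frieze the missing entries already sit (which is what gets used later, e.g.\ in Cor.~\ref{Cor:graph-boundary}), while yours produces explicit closed formulas $a_{w+2}=\bigl(1+(1,w+1)\bigr)/(1,w+2)$ and $a_{w+3}=\bigl(1+(2,w+2)\bigr)/(2,w+3)$ at the cost of a (mild, correctly justified) nonvanishing hypothesis. Your closing remark that only uniqueness, not existence or positivity of the completion, needs to be proved is an accurate reading of the statement.
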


\begin{proof}
	Write the frieze with entries $m_{i,j}$, where the indexing follows the pattern as in \eqref{Eqn:frieze}: the indices $i,j$ are in $\ZZ$, with $i \leq j \leq w+i+3$  
	and the  boundary conditions are $m_{i,i}=0$, $m_{i,i+1}=1$, $m_{i,w+i+2}=1$, $m_{i,w+i+3}=0$. The quiddity sequence is $m_{i-1,i+1}=a_i$ for 
	$ i \in \{ 1, \ldots, w+3\}$. 
	Assuming that the $m_{i-1,i+1}$ for 
$ i \in \{ 1, \ldots, w+1\} $
	are known, one can calculate all the entries in the triangle below using the diamond rule, that is, for 
$ i \in \{ 0, \ldots , w \}$, 
	all $m_{i,j}$ for $i+2 \leq j \leq w+2$.  Thus we can calculate all entries in the diagonals $m_{0,j}$ and $m_{1,j}$, in particular, 
	the two entries $m_{0,w+1}$ and $m_{1, w+2}$. 
By \cite[(6.3), p.~306]{Coxeter}  
	we have  
	\begin{equation}
		\label{eq:Coxeter_6_3}
		m_{i,j} = m_{j, i + w + 3} = m_{i + w + 3, j + w + 3}
		\ .
	\end{equation}
	Applying the first equality of \eqref{eq:Coxeter_6_3} for $ i = k-w-2 $ and $ j = k-1 $, 
	we obtain
	\[
		m_{k - w - 2, k-1} = m_{k-1,k+1}
		\ . 
	\]
	For $ k = w + 2 $ (resp.~$ k = w+ 3 $), this provides
	$ m_{0,w+1} = m_{w+1,w+3} $ (resp.~$ m_{1,w+2} = m_{w+2,w+4} $). 
	Since $ a_k = m_{k-1,k+1} $ for all $ k $, the assertion of the lemma follows.
\end{proof}

\begin{Thm}
	\label{Thm:Frieze_Coord_Lotus}
	Let $ \cF $ be a CC-frieze of width $ w $ with entries $ m_{i,j} $ indexed as in \eqref{Eqn:frieze}. 
	Let $ P $ be the corresponding $ (w+3) $-gon with triangulation $\mc{T}$ (Thm.~\ref{Thm:CoCo}). 
	For every element $ m_{k-1,k+1} $ of the quiddity sequence of $ P $, 
	there exists a unique embedding of $ P  $ 
	as a Newton lotus of the form $  \Lambda  = \Lambda(\mc{E})$
	into the universal lotus $ \Lambda(e_1,e_2) $ of $ \ZZ^2 $ relative to the standard basis $ (e_1, e_2) $
	such that the quiddity sequence of the resulting triangulated polygon is 
		$ ( m_{k-1,k+1}, m_{k,k+2}, \ldots, m_{k+w+1,k+w+3} )$ 
		starting from the vertex $ (0,1) $: 
	\\
	Choose $ k \in \{ 1 , \ldots, w + 3 \} $  (corresponding to the element $ m_{k-1,k+1} $ in the quiddity sequence). 
	Then the vertices of the embedded polygon are determined by the two diagonals from top left to bottom right containing $ m_{k-1,k+1} $ and $ m_{k,k+2} $ respectively.
	More precisely, the vertices are
	\[
	(0,1), \ 
	(1,m_{k-1,k+1}), \ 
	(m_{k,k+2},m_{k-1,k+2}), \
	\ldots, 
	\
	(m_{k,k+\ell}, m_{k-1,k+\ell}), \
	\ldots, 
	\
	(m_{k,k+w+1},1),
	\
	(1, 0) 
	\ . 	
	\] 
\end{Thm}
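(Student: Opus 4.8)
The plan is to exhibit the embedding explicitly, read off its vertices from two descending diagonals of the frieze, verify that the prescribed triangulation is carried onto the petal decomposition of the lotus these vertices bound, and finally deduce uniqueness from the rigidity of petals. Fix $k$ and, for $0\le \ell\le w+2$, set
\[
 v_\ell := (m_{k,k+\ell},\, m_{k-1,k+\ell})\in\ZZ^2 .
\]
The frieze boundary conditions give $v_0=(0,1)=e_2$ and $v_{w+2}=(1,0)=e_1$, and since $\cF$ is a CC-frieze every intermediate $v_\ell$ has strictly positive coordinates, hence lies in the interior of $\sigma_0$. The two coordinates of the $v_\ell$ trace exactly the two top-left--to--bottom-right diagonals of $\cF$ through $m_{k-1,k+1}$ and $m_{k,k+2}$, which is the content of the final ``in particular'' assertion.

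First I would record the relevant linear algebra. Writing $M_j:=(m_{k,j},m_{k-1,j})$ (so $v_\ell=M_{k+\ell}$) and extending by $M_{k-1}:=(-1,0)$, the three-term recursion for continuants behind \eqref{eq:frieze_entry_cont} gives $m_{i,j+1}=a_j m_{i,j}-m_{i,j-1}$ and hence the vector recursion $M_{j+1}=a_jM_j-M_{j-1}$. Since $\det(M_{k-1},M_k)=-1$, this propagates $\det(M_j,M_{j+1})=-1$ for all $j$, and a short induction on $j-i$ using the same recursion yields $\det(M_i,M_j)=-m_{i,j}$ for $k-1\le i\le j\le k+w+2$. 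In particular consecutive $v_\ell$ are unimodular, and from $\det(v_\ell,v_{\ell+1})=-1$ together with positivity of the coordinates the slopes $m_{k-1,k+\ell}/m_{k,k+\ell}$ strictly decrease from $\infty$ to $0$. Thus $v_0,\dots,v_{w+2}$ is a monotone unimodular (Farey) path from $e_2$ to $e_1$.

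The heart of the proof is to show that each triangle $\{p,r,t\}$ of $\mc{T}$ (with $k\le p<r<t\le k+w+2$ after shifting labels so that the distinguished vertex is $k$ and the base edge is $\{k,k+w+2\}$) is mapped to a petal. I would first prove that every side of a triangle of $\mc{T}$ has frieze value $1$: if $[i,j]$ is an edge or inner diagonal of $\mc{T}$, it cuts off a subpolygon in which $i,j$ are adjacent and whose interior quiddities coincide with those of $\mc{T}$, so by the horizontal periodicity of that subpolygon's frieze $m_{i,j}=P_{j-i-1}(a_{i+1},\dots,a_{j-1})=1$. In particular $m_{p,r}=m_{r,t}=m_{p,t}=1$. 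Since $\det(M_p,M_t)=-m_{p,t}=-1$, the pair $(M_p,M_t)$ is a lattice basis; writing $M_r=\alpha M_p+\beta M_t$ and pairing with $M_p$ and with $M_t$ via the determinant gives $\beta=m_{p,r}=1$ and $\alpha=m_{r,t}=1$, so $M_r=M_p+M_t$. Hence $\{v_p,v_r,v_t\}$ is exactly the petal $\delta(v_p,v_t)$ with apex $v_r=v_p+v_t$. I expect this step---the passage from ``side of $\mc{T}$'' to ``frieze entry $1$'' to the petal identity---to be the main obstacle, since it is where the combinatorics of $\mc{T}$ is fused with the lattice geometry.

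The remaining points are then formal. The triangle on the base edge $[v_0,v_{w+2}]$ has apex $v_0+v_{w+2}=e_1+e_2$, so it is the base petal; and every other image petal shares its base edge---an inner diagonal of $\mc{T}$---with the unique adjacent triangle on the side of the base, which is again a petal and is its parent. Hence the set of image petals is closed under taking parents, so its union is a lotus $\Lambda$, and choosing $\mc{E}$ to be the slopes of the pinching points of $\Lambda$ together with $\{0,\infty\}$ realizes it as a Newton lotus $\Lambda(\mc{E})$. The map $k+\ell\mapsto v_\ell$ is a simplicial isomorphism from $(P,\mc{T})$ onto $\Lambda$, so the number of petals incident to $v_\ell$ equals the number of triangles of $\mc{T}$ at $k+\ell$, namely $a_{k+\ell}=m_{k+\ell-1,k+\ell+1}$; reading from $v_0=(0,1)$ this is precisely the asserted quiddity sequence $(m_{k-1,k+1},\dots,m_{k+w+1,k+w+3})$. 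Finally, uniqueness follows because in any lotus each petal has its apex at the sum of its two base vertices: prescribing that the distinguished vertex maps to $e_2$, its predecessor to $e_1$, and that the quiddity is read in the given cyclic order fixes the base edge and its orientation, whereupon the apex-is-sum rule determines all vertices inductively by distance from the base. Any embedding with the prescribed quiddity therefore coincides with the constructed one, giving existence, uniqueness, and the stated coordinates.
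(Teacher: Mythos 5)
Your proposal is correct, and it takes a genuinely different route to the same coordinates. The paper's proof walks along the lateral boundary vertex by vertex: at each boundary vertex it uses the prescribed quiddity (the number of incident petals, which forces a fan of consecutive petals there) to determine the next vertex via $v_2=\mu v_1-v_0$, and then identifies the resulting coordinates with frieze entries through the continuant recursion $P_n=y_nP_{n-1}-P_{n-2}$ and \eqref{eq:frieze_entry_cont}. You instead define all vertices at once from the two descending diagonals, prove the determinant identity $\det(M_i,M_j)=-m_{i,j}$ (essentially Coxeter's Farey-distance formula, which the paper only mentions in the introduction), and then argue triangle by triangle: the standard fact that every side of a triangle of $\mc{T}$ has frieze value $1$ combines with unimodularity to give $M_r=M_p+M_t$, so each triangle of $\mc{T}$ lands on a petal, and closure under parents follows from the apex-is-sum identity. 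Both arguments rest on the same three-term recursion, but yours makes the simplicial isomorphism $(P,\mc{T})\to\Lambda$ and the lotus property of the image explicit, at the price of invoking the ``$1$s of the frieze correspond to diagonals of $\mc{T}$'' correspondence, which the paper's proof does not need; the paper's version is more directly algorithmic and yields uniqueness as a by-product of the construction, which you recover separately from the same apex-is-sum rigidity. The only points worth spelling out a little more in your write-up are that every positively oriented unimodular pair in $\sigma_0$ really does occur as the base of a petal of the universal lotus (so that ``unimodular triangle with apex the sum'' may be upgraded to ``petal''), and that a finite union of petals closed under taking parents is automatically of the Newton form $\Lambda(\mc{E})$ with $\mc{E}$ the slopes of the pinching points; both are true and routine, but they are the hinges on which your reformulation turns.
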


\begin{proof}
	We deduce the embedding by constructing the mentioned lotus 
	$ \Lambda $.
	In the first step, we choose the vertex $ (0,1) $ to be the one corresponding to the quiddity entry $ m_{k-1,k+1} $.
	Hence, we have to have $ m_{k-1,k+1} $ many triangles incident to the vertex $ (0,1) $. 
	In order to achieve this, the triangles with vertices $ (0,1), (1,a), (1,a+1) $ have to be part of the lotus $ \Lambda $,
	for $ a \in \{ 0, \ldots, m_{k-1,k+1} - 1 \} $,
	and the triangle for $ a = m_{k-1,k+1} $ is not allowed to be contained in $ \Lambda $.  
	Hence, the vertex following clockwise to $ (0,1) $ has to be $ (1,m_{k-1,k+1}) $.  
	See also Fig.~\ref{Fig:Embedd1}. 
	\\
	\begin{figure}[h!]
		\begin{center}
			\begin{tikzpicture}[scale=1]
				
				\filldraw [fill=blue!20,draw=black!80, dotted] (0,1) -- (1,1) -- (1,2) -- cycle;
				\filldraw [fill=blue!20,draw=black!80, dotted] (0,1) -- (1,2) -- (1,3) -- cycle;
				\filldraw [fill=orange!20,draw=black!80, dotted] (1,4) -- (2,4) -- (2.5,5) -- cycle;

				\filldraw [fill=blue!40,draw=black!80] (1,0) -- (0,1) -- (1,1) -- cycle;
				\filldraw [fill=blue!40,draw=black!80] (0,1) -- (1,3) -- (1,4) -- cycle;
				\filldraw [fill=orange!40,draw=black!80] (1,3) -- (1,4) -- (2,4) -- cycle;
				\filldraw [fill=orange!40,draw=black!80]  (1,4) -- (2.5,5) -- (3,6) -- cycle;

				\draw [->] (0,0) -- (3.35,0);
				\draw [->] (0,0) -- (0,6.35);

				\node [below] at (1,0) {$ (1,0) $}; 
				\node at (1,0) {\scriptsize $ \bullet $}; 
				
				\node [left] at (0,1) {$ (0,1) $};  
				\node at (0,1) {\scriptsize $ \bullet $}; 
				
				\node [right] at (1,1) {$ (1,1) $}; 
				\node at (1,1) {\scriptsize $ \bullet $};

				\node [right] at (1,2.85) {$ (1,m_{k-1,k+1}-1) $};
				\node at (1,3) {\scriptsize  $ \bullet $}; 
				
				\draw [->] (0.6,5.3) to [out=-90,in=120] (0.94,4.07);
				\node[above] at (1.2,5.2) {$ (1,m_{k-1,k+1}) $}; 
				\node at (1,4) {\scriptsize $ \bullet $};

				\node [right] at (3,6) {$ (1,m_{k-1,k+1}-1) +  ( m_{k,k+2} - 1 ) (1, m_{k-1,k+1} )$}; 
				\node at (3,6) {\scriptsize $ \bullet $};

			\end{tikzpicture}		
		\end{center}
		\caption[Symbolic picture of the beginning of the embedding procedure of the triangulated polygon.]{
			Symbolic picture of the beginning of the embedding procedure of the triangulated polygon.
			(The bullet points are not marked points, but indicate for which vertex the coordinates next to are provided.) }  
		\label{Fig:Embedd1} 
	\end{figure}%
	Since $ m_{k,k+2} $ is the entry following $ m_{k-1,k+1} $ in the quiddity sequence, the number of triangles incident to the vertex $ (1,m_{k-1,k+1}) $ has to be equal to $ m_{k,k+2} $.  
	There is already the triangle with vertices $ (0,1), (1,m_{k-1,k+1}), (1,m_{k-1,k+1}-1) $. 
	The remaining triangles are those with vertices 
	\[ 
	(1,m_{k-1,k+1}), \
	(1,m_{k-1,k+1}-1) + (b-1) (1,m_{k-1,k+1}), \
	(1,m_{k-1,k+1}-1) + b (1,m_{k-1,k+1})  \ ,
	\] 
	for $ b \in \{ 1, \ldots, m_{k, k+2}-1 \} $ (if $ m_{k,k+2} >1 $).
	This is also visualized in Fig.~\ref{Fig:Embedd1}.
	Therefore, the next vertex of $ \Lambda $ following the vertex $ (1,m_{k-1,k+1}) $ clockwise is
	\[ 
	(1,m_{k-1,k+1}-1 ) +  ( m_{k,k+2} - 1 ) ( 1,m_{k-1,k+1}  )
	= 	
	( m_{k,k+2} ,\ m_{k-1,k+2} ) 
	\ , 	
	\]
	where we use 
	that
	$ m_{k,k+2} m_{k-1,k+1} - 1 =  m_{k-1,k+2} $ by the frieze rule 
	\eqref{eq:friezerule}.  
	
	We iterate the last step of the construction:
	Suppose that we have already constructed the vertices 
	$ v_0 := (m_{k,k+\ell},m_{k-1,k+\ell}) $ and $ v_1 := ( m_{k,k+\ell+1} , m_{k-1,k+\ell+1} ) $, 
	for some $ \ell $,
	and we want to show that the next vertex $ v_2 $ following clockwise is $ (m_{k,k+\ell+2}, m_{k-1,k+\ell+2}) $. 
	(Note that $ m_{k,k+1} = m_{k-1,k} = 1 $ and $ m_{k,k} = 0 $.)
	There have to be $ \mu := m_{k+\ell,k+\ell+2} $ many triangles incident to the vertex $ v_1 $.
	By the same argument as for $ \ell = 2 $ above, we get 
	$ v_2 = v_1  - v_0 + (\mu - 1 ) v_1  = \mu v_1 - v_0$,
	see also Fig~\ref{Fig:Embedd2}. 
	\begin{figure}[h!]
		\begin{center}
			\begin{tikzpicture}[scale=0.8]
				
				\filldraw [fill=blue!20,draw=black!80, dotted] (0,1) -- (1,1) -- (1,2) -- cycle;
				\filldraw [fill=blue!20,draw=black!80, dotted] (1,1) -- (1,2) -- (2,2) -- cycle;
				\filldraw [fill=blue!20,draw=black!80, dotted] (1,2) -- (2,2) -- (1,3) -- cycle;

				\filldraw [fill=orange!20,draw=black!80, dotted] (1,3) -- (2,3) -- (2,4) -- cycle;
				\filldraw [fill=orange!20,draw=black!80, dotted] (1,3) -- (2,4) -- (2,5) -- cycle;
				\filldraw [fill=blue!20,draw=black!80, dotted] (2,6) -- (3,6) -- (3.5,7) -- cycle;

				\filldraw [fill=blue!40,draw=black!80] (1,0) -- (1,1) -- (0,1) -- cycle;
				\filldraw [fill=orange!40,draw=black!80] (1,3) -- (2,2) -- (2,3) -- cycle;
				\filldraw [fill=orange!40,draw=black!80] (1,3) -- (2,5) -- (2,6) -- cycle;
				\filldraw [fill=blue!40,draw=black!80] (2,5) -- (2,6) -- (3,6) -- cycle;
				\filldraw [fill=blue!40,draw=black!80]  (2,6) -- (3.5,7) -- (4,8) -- cycle;
				
				\draw [->] (0,0) -- (4.35,0);
				\draw [->] (0,0) -- (0,8.35);

				\node [left] at (1,3) {$  v_0 $}; 
				\node at (1,3) {\scriptsize $ \bullet $};

				\node [right] at (2, 4.85) {$ v_1 - v_0 $};
				\node at (2,5) {\scriptsize  $ \bullet $}; 
				
				\node [left] at (2,6) {$ v_1 $}; 
				\node at (2,6) {\scriptsize $ \bullet $};

				\node [right] at (4,8) {$ v_1 - v_0  +  ( \mu  - 1 ) v_1 $}; 
				\node at (4,8) {\scriptsize $ \bullet $};

			\end{tikzpicture}		
		\end{center}
		\caption{Symbolic picture of the iteration of the embedding procedure of the triangulated polygon.}  
		\label{Fig:Embedd2} 
	\end{figure}%
	
	Set $ a_k := m_{k-1,k+1} $ for $ k \in \ZZ $.  
	The second coordinate of $ v_2 = (v_{2,1}, v_{2,2}) $ is 
	\[
	\begin{array}{c}
		v_{2,2} 
		= 
		m_{k+\ell,k+\ell+2} m_{k-1,k+\ell+1} 	- m_{k-1,k+\ell}  
		=
		\\[5pt]
		\stackrel{\eqref{eq:frieze_entry_cont}}{=}
		a_{k+\ell+1} P_{\ell+1}(a_k, \ldots, a_{k+\ell}) - P_{\ell} (a_k, \ldots, a_{k+\ell-1})  
		\stackrel{(*)}{=} 
		P_{\ell+2}(a_k, \ldots, a_{k+\ell+1})
		=
		\\[5pt]
		\stackrel{\eqref{eq:frieze_entry_cont}}{=}
		m_{k-1,k+\ell+2}
		\ , 
	\end{array} 
	\]  
	where $ (*) $ uses the recursion $ P_n(y_1, \ldots, y_n) = y_n P_{n-1}(y_1, \ldots, y_{n-1}) - P_{n-2} (y_1, \ldots, y_{n-2} ) $, \cite[Number 547]{Muir}.
	The analogous computation leads to 
	$ v_{2,1} = m_{k,k+\ell+2} $.
	The claim follows.
\end{proof}

\begin{Bem} \label{Bem:cerfvolant}
Popescu-Pampu described how to embed a so-called membrane \cite[D\'ef.~4.3]{PPP-cerfvolant} into the universal lotus
	in \cite[Paragraph after Remarque~5.3 and Example~5.4]{PPP-cerfvolant}.
	The notion of a membrane is similar to a triangulation of a polygon, but involves two different type of triangles corresponding to petals and half-petals in the universal lotus. 
	While our embedding result (Thm.~\ref{Thm:Frieze_Coord_Lotus}) provides the precise coordinates of the vertices of the embedded lotus, 
	Popescu-Pampu gives a procedure how to embed a given membrane. 	
\end{Bem}

\begin{example}
	In Fig.~\ref{Fig:11_8}, we have seen one embedding of the triangulated polygon (Fig.~\ref{Fig:running})
	corresponding to the frieze arising from the continued fraction $\lambda=\frac{11}{8}=\lb 2,2,3,2 \rb$.
	For the reader's convenience, we recall the frieze $\F(\lambda)$ in Fig.~\ref{Fig:FriezeRunningRecalled} 
	-- this time including the zero rows at the top and bottom.
	\begin{figure}[h!]
		\small
		\begin{tikzcd}[row sep=0.1em, column sep=-0.35em]
			\col{0} && 0 && 0 && \fbox  0 && 0 && 0 && 0 && 0  && 0 && \col{0}
			\\
			&
			\col{1} && 1 && \fbox 1   && \fcolorbox{black}{lightgray}{1}   && 1  && 1   && 1   && 1 &&1 && \col{1} 
			\\
			&&
			\col{4} && 1     &&   \fcolorbox{black}{lightgray}{2}     && \fbox 2      && 3     && 2    && 1    && 3 && 4 && \col{1} 
			\\
			&&&
			\col{3}&& \ 1     &&  \fbox 3     && \fbox 5      && 5     && 1    && 2     && 11 &&3 &&\col{1} 
			\\
			&&&&
			\col{2}&& 1     && \fbox 7     && \fbox 8     && 2      && 1     && 7     && 8 &&2 && \col{1} 
			\\
			&&&&& 
			\col{1}&& 2     && \fbox{11}   && \fbox 3    && 1      && 3   && 5     && 5 &&1 && \col{2}
			\\
			&&&&&&
			\col{1}&& 3     &&  \fbox 4   &&  \fbox1    && 2      && 2   && 3     &&2 &&1 && \col{3} 
			\\
			&&&&&&&
			\col{1} && 1   && \fbox 1  && \fbox 1  && 1   && 1   && 1  && 1  && 1 &&\col{1} 
			\\
			&&&&&&&& \col{0} && 0 && \fbox 0 && 0 && 0 && 0 && 0 && 0  && 0 && \col{0}  
		\end{tikzcd}
		\caption{The frieze obtained from $\lambda=\frac{11}{8}=\lb2,2,3,2\rb$,
		where the entries of the two diagonals determining the coordinates of $ \Lambda (\frac{11}{8}) $ are marked with boxes and the vertex $ (1, m_{k-1,k+1}) $ is additionally colored.}
		\label{Fig:FriezeRunningRecalled}
	\end{figure}
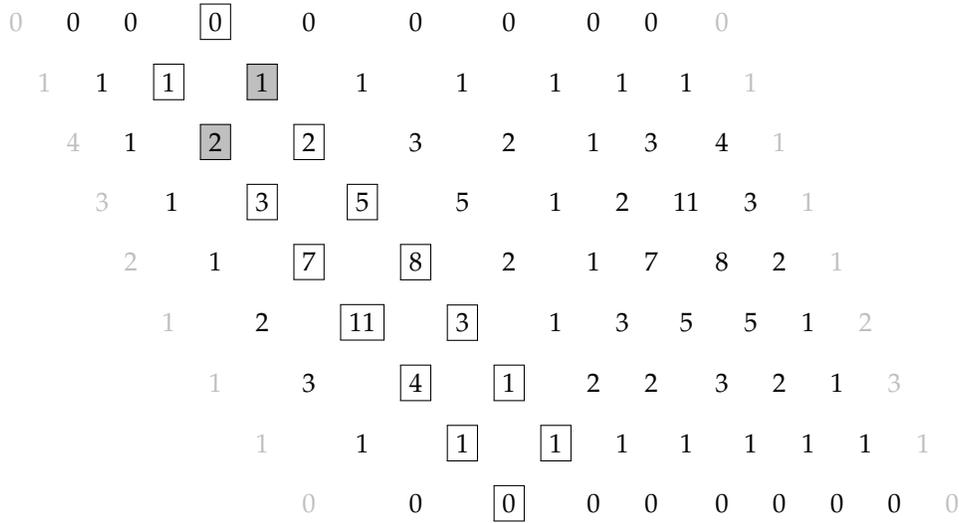
	By Thm.~\ref{Thm:Frieze_Coord_Lotus}, we can read off the coordinates of the vertices of the mentioned lotus (cf.~Fig.~\ref{Fig:11_8}):
	\[
	(0,1), \
	(1,2), \
	(2,3), \ 
	(5,7), \ 
	(8,11), \ 
	(3,4), \
	(1,1), \
	(1,0) \ .
	\]
	On the other hand, we could also choose the embedding of the polygon obtained by choosing $ m_{2,4} = 3 $ instead of $ m_{0,2} = 2 $ for the vertex $ (0,1) $. 
	Then Thm~\ref{Thm:Frieze_Coord_Lotus} provides that the coordinates of the corresponding lotus are
	\[
		(0,1), \
		(1,3), \ 
		(2,5), \
		(1,2), \
		(1,1), \ 
		(3,2), \
		(2,1), \
		(1,0) \ .
	\]
	In Fig.~\ref{Fig:11_8_anders} we depict the lotus. 
	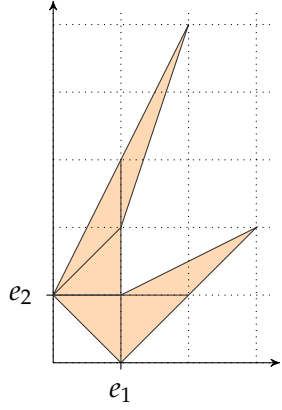
\begin{figure}[h!]
		\begin{center}
			\begin{tikzpicture}[scale=0.9]
				\filldraw [fill=orange!30,draw=black!80] 
				(0,1) --
				(1,3) --
				(2,5) -- 
				(1,2) -- 
				(1,1) -- 
				(3,2) --
				(2,1) --
				(1,0) --
				cycle;

				\draw [dotted] (0,0) grid (3.2,5.2);
				\draw [->] (0,0) -- (3.35,0);
				\draw [->] (0,0) -- (0,5.35);
				
				\draw[draw=black!80] (0,1)--(1,1) -- (2,1);
				\draw[draw=black!80] (1,0)--(1,1);
				\draw[draw=black!80] (1,1)--(0,1) -- (1,2) -- (1,3);

				\draw  (0,1) -- (-0.1,1);
				\node [left] at (-0.15, 1) {$ e_2 $};

				\draw  (1,0) -- (1,-0.1);
				\node [below] at (1,-0.15) {$ e_1 $};
			\end{tikzpicture}		
		\end{center}
		\caption{Alternative embedding of the triangulated polygon of Fig.~\ref{Fig:running} into the universal lotus $ \Lambda(e_1, e_2) $.}  
		\label{Fig:11_8_anders} 
	\end{figure}%
\end{example}

\begin{corollary}
	\label{Cor:Frieze_gibt_Kurve}
Given a frieze $ \cF $ with quiddity sequence $\{ a_i\}_{i=1}^m$, there exists a plane curve $ C = V (f) $ such that its dual resolution graph $ \Gamma(f) $ is of type $A_{m-2}$ and corresponds to the quiddity sequence of $ \cF $, i.e., $\Gamma(f)$ looks as in Fig.~\ref{fig:GammaofFrieze}: 
    \begin{figure}[h!]
    \begin{center}
\begin{tikzpicture}[scale=0.8]
	
\draw [-, color=black, thick](0,0) -- (2.4,0);
\draw [-, color=black, dotted](2.4,0) -- (3.6,0);
\draw [-, color=black, thick](3.6,0) -- (8.4,0);
\draw [-, color=black, dotted](8.4,0) -- (9.6,0);
\draw [-, color=black, thick](9.6,0) -- (12,0);

\node [below] at (0,0) {$-a_2$};
\node [below] at (2,0) {$-a_3$};
\node [below] at (4,0) {$-a_{k-1}$};
\node [below] at (6,0) {$-a_{k}$};
\node [below] at (8,0) {$-a_{k+1}$};
\node [below] at (10,0) {$-a_{m-2}$};
\node [below] at (12,0) {$-a_{m-1}$};

\node[draw,circle, inner sep=1.pt,fill=black] at (0,0){};
\node[draw,circle, inner sep=1.pt,fill=black] at (2,0){};
\node[draw,circle, inner sep=1.pt,fill=black] at (4,0){};
\node[draw,circle, inner sep=1.pt,fill=black] at (6,0){};
\node[draw,circle, inner sep=1.pt,fill=black] at (8,0){};
\node[draw,circle, inner sep=1.pt,fill=black] at (10,0){};
\node[draw,circle, inner sep=1.pt,fill=black] at (12,0){};

\end{tikzpicture}
\end{center}
 \caption{The dual resolution graph of $C=V(f)$.}
\label{fig:GammaofFrieze}
\end{figure}
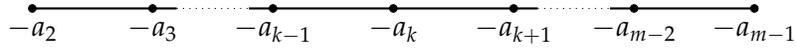
\end{corollary}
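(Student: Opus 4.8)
The plan is to chain together the three correspondences established in this section. First I would use Thm.~\ref{Thm:CoCo} to translate the frieze $\cF$ (whose width is $w = m-3$) into a triangulated $m$-gon $P$ with quiddity sequence $\{a_i\}_{i=1}^m$. Then I would embed $P$ into the universal lotus by Thm.~\ref{Thm:Frieze_Coord_Lotus}, taking the starting index $k=1$ so that the vertex $(0,1)$ of the embedded polygon carries the quiddity entry $a_1 = m_{0,2}$; the resulting clockwise quiddity sequence read from $(0,1)$ is then exactly $(a_1, a_2, \ldots, a_m)$. This produces a Newton lotus $\Lambda = \Lambda(\mc{E}) \subseteq \Lambda(e_1,e_2)$ for a finite set $\mc{E} \subseteq \QQ_{\geq 0}\cup\{\infty\}$ (the slopes of the pinching points of the embedded polygon, together with $0$ and $\infty$). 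Finally I would realize $\Lambda$ as the Newton lotus of a concrete curve and read the graph off its lateral boundary via Thm.~\ref{Thm:Lotus-dual}.

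The key bookkeeping is the identification of the exceptional components with the correct quiddity entries. With $k=1$ the two basic vertices of $\Lambda$ are $e_2 = (0,1)$, carrying $a_1$, and $e_1 = (1,0)$, carrying $a_m$; by Thm.~\ref{Thm:Lotus-dual}(1) these represent the branches $L'$ and $L$ and are not exceptional components. The $m-2$ lateral vertices lying strictly between $e_1$ and $e_2$ along $\partial_+\Lambda$ therefore carry precisely the entries $a_2, a_3, \ldots, a_{m-1}$, and they are the irreducible exceptional components $E_1, \ldots, E_{m-2}$. Since $\partial_+\Lambda$ is, by definition, a connected polygonal line running from $e_1$ to $e_2$ through these vertices, consecutive $E_j$'s meet and no others do, so the dual graph is a path, i.e.\ of type $A_{m-2}$; and Thm.~\ref{Thm:Lotus-dual}(3) gives $E_j^2 = -a_{j+1}$, reproducing Fig.~\ref{fig:GammaofFrieze}.

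The part that needs genuine argument is producing a reduced \emph{Newton non-degenerate} germ $f$ whose Newton lotus equals the abstract lotus $\Lambda(\mc{E})$ obtained above, so that Thm.~\ref{Thm:Lotus-dual} applies. I would take
\[
	f = \prod_{\lambda \in \mc{E} \setminus \{0,\infty\}} \bigl( x^{n_\lambda} - y^{q_\lambda} \bigr),
	\qquad \lambda = \tfrac{n_\lambda}{q_\lambda} \text{ in lowest terms} .
\]
The Newton polyhedron of a product is the Minkowski sum of the polyhedra of its factors, so $\mc{N}(f)$ has exactly one compact edge orthogonal to each ray $p(\lambda)$; hence $\Sigma(f) = \Sigma(\mc{E})$ and $\Lambda(f) = \Lambda(\mc{E})$ by Def.~\ref{Def:lotus}. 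For Newton non-degeneracy (Def.~\ref{Def:nnd}) one checks that the restriction of $f$ to the compact edge of slope $\lambda$ is a monomial times the binomial $x^{n_\lambda} - y^{q_\lambda}$; since a monomial is a unit on $(\CC^*)^2$ and $\{x^{n_\lambda} = y^{q_\lambda}\}$ is smooth there (the gradient of $x^{n_\lambda} - y^{q_\lambda}$ never vanishes on $(\CC^*)^2$), each edge restriction defines a smooth torus curve. Each factor is irreducible as $\gcd(n_\lambda,q_\lambda)=1$, and the factors are pairwise coprime, so $f$ is reduced; compare the one-pinching-point case $C=V(x^n-y^q)$ in Example~\ref{Ex:lotus-continuedfraction}.

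I expect this last step -- exhibiting the admissible curve $f$ -- to be the only real obstacle, the remainder being a direct application of Thm.~\ref{Thm:Frieze_Coord_Lotus} and Thm.~\ref{Thm:Lotus-dual}. If one prefers not to argue non-degeneracy through the explicit product, the same conclusion follows by taking $f$ generic with Newton polyhedron $\mc{N}$, since Newton non-degeneracy is an open dense, nonempty condition on the coefficients.
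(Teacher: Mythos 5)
Your proposal is correct and follows essentially the same route as the paper: translate $\cF$ into a triangulated $m$-gon via Thm.~\ref{Thm:CoCo}, embed it as a Newton lotus $\Lambda(\mc{E})$ with Thm.~\ref{Thm:Frieze_Coord_Lotus} (starting at $v_1 \leftrightarrow (0,1)$), and realize $\Lambda(\mc{E})$ as the lotus of the product $f=\prod_\lambda (x^{n_\lambda}-y^{q_\lambda})$ before reading the graph off $\partial_+\Lambda$ via Thm.~\ref{Thm:Lotus-dual}. You supply more detail than the paper does (explicit verification of Newton non-degeneracy and reducedness of the product, and of the $A_{m-2}$ shape), but the construction and the logical skeleton are identical.
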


\begin{proof}
We can associate a triangulated polygon $P(\mc{F})$ with $m$ vertices $v_1, \ldots, v_m$  to the frieze $\mc{F}$ such that $a_i$ is the number of triangles incident to $v_i$ by Thm.~\ref{Thm:CoCo}. 
Then Thm.~\ref{Thm:Frieze_Coord_Lotus} embeds $P$ uniquely as a Newton lotus $\Lambda(\mc{E})$ for some nonempty finite set $\mc{E} \in \QQ_+$ into the universal lotus $\Lambda(e_1, e_2)$ such that the starting vertex $(0,1)$ corresponds to $v_1$ (i.e., the the base is $e_1=v_m$ and $e_2=v_1$). Now the curve $C=V(f)$ can be reconstructed from $\mc{E}=\{\lambda_1, \ldots, \lambda_\rho \}$. Explicitly, if $\lambda_i=\frac{d_i}{c_i}$ with $\gcd(d_i, c_i)=1$ for 
$ i \in \{ 1, \ldots, \rho \}$, then we may take
\[ 
	f = \prod_{i = 1}^\rho ( x^{d_i} - y^{c_i} ) \ . 
\]
\end{proof}

\begin{Bem}
	Since the lotus $ \Lambda (\mc{E}) $ constructed in the proof of Cor.~\ref{Cor:Frieze_gibt_Kurve} is unmarked, the finite set $ \mc{E} $, and thus the curve $ C $, is not unique,
	cf.~Example~\ref{Ex:marked_unmarked}.
	Nonetheless, there is a unique $ \mc{E}=\{\lambda_1, \ldots, \lambda_\rho \}$ if we impose $ \rho $ to be minimal.
	The number of irreducible components of the corresponding curve is then minimal.
\end{Bem}

As a consequence of Thm.~\ref{Thm:Lotus-dual} and Thm.~\ref{Thm:Frieze_Coord_Lotus} we further obtain the following result.

\begin{corollary} \label{Cor:graph-boundary}
	Let $\Gamma(f)$ be the dual resolution graph of a Newton non-degenerate plane curve $C=V(f) \subseteq S$ of type $A_{m-2}$ such that the vertex $i$ has weight $-a_{i+1}$, 
	$ i \in \{ 1, \ldots, m-2 \} $,
	for some $a_i \in \ZZ_{>0}$. Then $\Gamma(f)$ corresponds to the lateral boundary $\D_+ \Lambda(\mc{E})$ for some Newton lotus $\Lambda(\mc{E})$. Further, one can associate a frieze $\mc{F}(\Lambda(\mc{E}))$ of width $m-3$ and quiddity sequence $\{a_i\}_{i=1}^m$ with $a_1$ and $a_m$ determined by Lemma \ref{Lem:quidditybase}.
\end{corollary}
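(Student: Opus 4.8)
The plan is to extract the frieze directly from the lotus that Theorem~\ref{Thm:Lotus-dual} attaches to $f$, and then to recover the two quiddity entries that the graph does not see by means of Lemma~\ref{Lem:quidditybase}. The first assertion requires almost no work: since $f$ is Newton non-degenerate, I would let $\mc{E}$ be the set of slopes of the rays orthogonal to the compact edges of the Newton polyhedron $\mc{N}(f)$, together with $0$ and $\infty$, and set $\Lambda(\mc{E}):=\Lambda(f)$. By parts (1) and (3) of Theorem~\ref{Thm:Lotus-dual}, the lateral boundary $\D_+\Lambda(f)$ is exactly the dual resolution graph $\Gamma(f)$: its $m-2$ lateral vertices are the exceptional components $E_1,\dots,E_{m-2}$, and each $E_i^2$ equals the negative of the number of triangles incident to $E_i$ in the associated triangulation. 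This gives the claim that $\Gamma(f)$ is the lateral boundary of a Newton lotus.

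Next I would produce the frieze. Part (2) of Theorem~\ref{Thm:Lotus-dual} says that $\Lambda(\mc{E})$ corresponds to a triangulated $m$-gon $P$, whose vertices are the $m-2$ lateral vertices together with the two basic vertices $e_1,e_2$. Applying Conway--Coxeter (Theorem~\ref{Thm:CoCo}) to $P$ yields a CC-frieze $\mc{F}(\Lambda(\mc{E}))$ of width $w=m-3$ whose quiddity sequence $\{a_i\}_{i=1}^{m}$ records the number of triangles incident to each vertex. By the matching in part (3), the entries at the lateral vertices satisfy $a_{i+1}=-E_i^2$ for $i\in\{1,\dots,m-2\}$, i.e.\ they are precisely the prescribed weights $a_2,\dots,a_{m-1}$; the two entries $a_1$ and $a_m$, attached to the basic vertices $e_1,e_2$, are not read off from the graph. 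At this point Theorem~\ref{Thm:Frieze_Coord_Lotus} can be invoked to realize $P$ explicitly back inside the universal lotus $\Lambda(e_1,e_2)$, confirming that the recovered frieze and the lotus $\Lambda(\mc{E})$ are compatible; positivity and integrality of all frieze entries are automatic, since $P$ is a genuine triangulated polygon and hence $\mc{F}(\Lambda(\mc{E}))$ is a bona fide CC-frieze.

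Finally I would pin down $a_1$ and $a_m$ using Lemma~\ref{Lem:quidditybase}. The known data $a_2,\dots,a_{m-1}$ forms a block of $m-2=w+1$ consecutive quiddity entries, and the quiddity sequence is periodic of period $w+3=m$. Setting $b_j:=a_{j+1}$ for $j\in\{1,\dots,w+1\}$ gives the first $w+1$ entries of the (cyclically shifted) quiddity sequence, so Lemma~\ref{Lem:quidditybase} determines its last two terms $b_{w+2}=a_{w+3}=a_m$ and $b_{w+3}=a_{w+4}=a_{m+1}=a_1$ uniquely. I expect the only delicate point to be exactly this bookkeeping step: Lemma~\ref{Lem:quidditybase} is stated for the \emph{first} $w+1$ quiddity entries, so one must explicitly invoke the horizontal periodicity of the frieze in order to apply it to the ``middle'' block $a_2,\dots,a_{m-1}$ and thereby recover the boundary entries $a_1$ and $a_m$ sitting at the basic vertices.
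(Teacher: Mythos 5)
Your proposal is correct and follows essentially the same route as the paper, which states this corollary without a separate proof as a direct consequence of Thm.~\ref{Thm:Lotus-dual} (take $\Lambda(\mc{E})=\Lambda(f)$, read the weights off the lateral boundary) together with Conway--Coxeter and Lemma~\ref{Lem:quidditybase}; your periodicity bookkeeping for recovering $a_1$ and $a_m$ from the block $a_2,\dots,a_{m-1}$ is exactly the intended use of that lemma. The only cosmetic difference is that you invoke Thm.~\ref{Thm:Frieze_Coord_Lotus} merely as a consistency check rather than as a load-bearing step, which is harmless.
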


The previous results allow us to enumerate dual resolution graphs of type $A_n$. Therefore recall that a graph $\Gamma=(V,E)$ is of type $A_n$ if $\Gamma$ is simply laced, has no loops, $V=\{1, \ldots, n\}$, and one can find a labeling of the vertices such that $E=\{(i,i+1)$ for all $ i \in \{ 1, \ldots, n-1 \} \}$. 
Further recall that two weighted graphs $\Gamma= ( V, E, W ) $ and $\Gamma'= ( V',E', W')$ are isomorphic if there exists an edge and weight preserving bijection between $V$ and $V'$.

For the following result we consider a dual resolution graph $\Gamma(C)$ of a plane curve $C \subseteq S$ without the arrowhead vertices, that is, we ignore the components of the strict transform of $C$.
\begin{corollary} \label{Cor:numberresgraphs}There exist
	$$\left \lceil  \frac{C_n  }{2} \right \rceil=\left \lceil  \frac{\frac{1}{n+1} {2n \choose n} }{2} \right \rceil $$
	pairwise different dual resolution graphs of plane curves $C \subseteq S$ of type $A_n$,
	where $ C_n $ is the $ n $-th Catalan number.
\end{corollary}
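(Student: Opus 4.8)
The plan is to turn the dictionary established in Cor.~\ref{Cor:Frieze_gibt_Kurve} and Cor.~\ref{Cor:graph-boundary} into a problem about triangulated polygons, and then to count orbits under the symmetry that records graph isomorphism. First I would observe that a dual resolution graph of type $A_n$, once the arrowheads are discarded, is just a weighted path, i.e.\ a sequence of weights $(-a_2,\ldots,-a_{n+1})$ considered up to reversing the order (the only nontrivial automorphism of the path $A_n$). By Cor.~\ref{Cor:graph-boundary} every such graph is the lateral boundary $\D_+\Lambda(\mc{E})$ of a Newton lotus with exactly $n$ lateral vertices, and by Thm.~\ref{Thm:Frieze_Coord_Lotus} together with Thm.~\ref{Thm:CoCo} these lotuses are in bijection with CC-friezes of width $w=n-1$, equivalently with triangulations of an $(n+2)$-gon. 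Hence there are exactly $C_n=\tfrac{1}{n+1}\binom{2n}{n}$ of them. Each frieze determines a weighted path through its quiddity window $(a_2,\ldots,a_{n+1})$, and conversely, by Lemma~\ref{Lem:quidditybase} any $n=w+1$ consecutive quiddity entries determine the whole frieze; so \emph{ordered} weighted paths of type $A_n$ correspond bijectively to the $C_n$ friezes.

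Next I would transport the identification "graph up to isomorphism'' onto the frieze side. Reversing the window $(a_2,\ldots,a_{n+1})\mapsto(a_{n+1},\ldots,a_2)$ corresponds to interchanging the two basic vertices $e_1\leftrightarrow e_2$ of the lotus, that is, to reflecting the triangulated $(n+2)$-gon across the perpendicular bisector $\tau$ of its base edge $[e_1,e_2]$ (one checks this on small cases: for the pentagon it sends the fan at $v_1$ to the fan at $v_5$). This $\tau$ is an involution on the set $\mc{S}_n$ of the $C_n$ triangulations, and two friezes give isomorphic graphs exactly when they lie in one $\tau$-orbit. By the orbit-counting lemma the number of pairwise different graphs is therefore
\[
\frac{1}{2}\bigl(\,|\mc{S}_n|+|\mathrm{Fix}(\tau)|\,\bigr)=\frac{1}{2}\bigl(C_n+F\bigr),
\]
where $F$ is the number of $\tau$-symmetric triangulations. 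As two-element orbits pair up, one always has $F\equiv C_n \pmod 2$, so it suffices to prove $F\le 1$ in order to land on the claimed value $\lceil C_n/2\rceil=\tfrac12\bigl(C_n+(C_n\bmod 2)\bigr)$.

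The determination of $F$ is the technical heart, and the step I expect to be the main obstacle. I would split on the parity of $n$. When $n$ is even the axis of $\tau$ meets the $(n+2)$-gon in the midpoints of two opposite edges and fixes no vertex; the triangle carrying the base edge can then never be $\tau$-invariant, which forces $F=0$. When $n$ is odd the axis runs through the base-edge midpoint and the opposite vertex $v^{\ast}$, so the base-edge triangle is forced to have apex $v^{\ast}$, and a symmetric triangulation is pinned down by its restriction to one of the two congruent halves cut off by $[e_1,v^\ast]\cup[v^\ast,e_2]$. The crux is then to show that this restriction is \emph{unique}, giving $F=1$ precisely when $C_n$ is odd; I expect this to be genuinely delicate, since it must reconcile with the arithmetic fact that $C_n$ is odd exactly when $n+1$ is a power of $2$, and a naive count of symmetric triangulations of the half-polygon is larger. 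Establishing this uniqueness (or, equivalently, the correct combinatorial constraint on which half-triangulations extend to admissible symmetric lotuses) is where the real work lies; once $F=C_n\bmod 2$ is secured, the displayed orbit count yields $\lceil C_n/2\rceil$ and finishes the proof of Cor.~\ref{Cor:numberresgraphs}.
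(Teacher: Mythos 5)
Your reduction of the problem to counting $\tau$-orbits of triangulations of the $(n+2)$-gon is correct, and it is essentially the paper's own (one-sentence) argument made precise: by Lemma~\ref{Lem:quidditybase} the window $(a_2,\dots,a_{n+1})$ of $w+1=n$ consecutive quiddity entries determines the frieze, so ordered weight sequences of realizable $A_n$-graphs biject with the $C_n$ triangulations, and graph isomorphism is implemented exactly by the reflection $\tau$ through the midpoint of the base edge. The Burnside count $\tfrac12\bigl(C_n+|\mathrm{Fix}(\tau)|\bigr)$ and your treatment of even $n$ (the axis fixes no vertex, so the triangle on the base edge cannot be $\tau$-invariant and $F=0$) are both fine.

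The gap is precisely the step you flagged as ``the real work,'' and it cannot be closed: for odd $n$ the number of $\tau$-symmetric triangulations is not at most $1$ but equals $C_{(n-1)/2}$. Indeed, a symmetric triangulation must contain the triangle on the base edge with apex the fixed vertex $v^{\ast}=v_{(n+3)/2}$, and is then freely determined by an \emph{arbitrary} triangulation of the half-polygon $v_1v_2\cdots v^{\ast}$ (an $\tfrac{n+3}{2}$-gon), since every such choice extends by mirroring; there is no further constraint coming from the lotus or the frieze. Already for $n=5$ this gives $F=C_2=2$: the two symmetric triangulations of the heptagon produce the distinct palindromic weight windows $(1,2,3,2,1)$ and $(2,1,5,1,2)$, hence two distinct self-mirror $A_5$-graphs, and the orbit count is $\tfrac12(42+2)=22$ rather than $\lceil C_5/2\rceil=21$. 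So your method, carried through correctly, yields $\tfrac12 C_n$ for even $n$ and $\tfrac12\bigl(C_n+C_{(n-1)/2}\bigr)$ for odd $n$; this agrees with $\lceil C_n/2\rceil$ for all even $n$ and for $n\in\{1,3\}$, but not for odd $n\ge 5$. The paper's proof simply asserts the count after noting that the only nontrivial automorphism of $A_n$ is the reflection, and never addresses the fixed triangulations, so the discrepancy lies in the statement of the corollary rather than in your framework --- but as written your proposal does not (and cannot) establish the claimed formula.
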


\begin{proof}
This is a consequence of Cor.~\ref{Cor:graph-boundary}, the fact that there are $ C_n $ pairwise different triangulations of an $ (n+2)$-gon
and since the only nontrivial isomorphism of a weighted graph of type $ A_n $ is the reflection about the middle, that is, the map sending vertex $i$ to vertex $n-i+1$.
\end{proof}

\section{Geometric interpretation of frieze entries}

We have seen that some entries of the quiddity sequence of a frieze $\mc{F}$ correspond to the negative of the self intersection numbers of the exceptional divisors in the minimal resolution of a curve associated to $\mc{F}$, see Thm.~\ref{Thm:Lotus-dual} and Cor.~\ref{Cor:graph-boundary}. Further, the coordinates of the lotus can be seen as neighboring diagonals in the frieze, cf.~Thm.~\ref{Thm:Frieze_Coord_Lotus}.   
In this section we will interpret some of the entries of the frieze $\mc{F}(\Lambda(f))$ in terms of partial resolutions of the curve $C=V(f)$. 

We first describe how the diagonals of the lotus $\Lambda(f)$ correspond to the entries in the frieze, using Pl\"ucker coordinates. We view the lotus $\Lambda(f)$ as a polygon according to Def.~\ref{def:polygon} and so its diagonals are defined. This allows one to associate a quiver to $\mc{F}(\Lambda(f))$ which yields a cluster category $\mc{C}$ of type $A$, as explained in Section \ref{Sub:categoryA}. 
In order to define reduction of friezes we look at the  Auslander--Reiten-quiver (=AR-quiver) of $\mc{C}$ and consider reduction of $\mc{C}$ with respect to a rigid module $M \in \mc{C}$.  The main result of this section (Thm.~\ref{Thm:Partial_reduction}) states that the reduction of $\mc{C}$ (resp. the corresponding frieze) will yield a partial resolution of the corresponding curve singularity, and hence a sublotus of $\Lambda(f)$. The entries in the reduction of $\mc{F}$ are already contained in the original frieze $\mc{F}$.

\subsection{Friezes and cluster categories of type $A$} \label{Sub:categoryA}

Consider a frieze of width $w=m-3$ corresponding to a triangulated $m$-gon $P$ with triangulation $\mc{T}=\{[i,j] \}$, where $[i,j]$ are $m-3$ noncrossing diagonals with $1 \leq i < j \leq m$ and $|i-j| \geq 2$. The condition $|i-j| \geq 2$  is to rule out the boundary edges. 
Then one can label a frieze as follows (cf.~with Coxeter's labeling \eqref{Eqn:frieze})
\begin{equation} \label{Eqn:plueckers}
	\adjustbox{scale=0.75,center}{
		\begin{tikzcd}[row sep=0.35em, column sep=-0.35em]
			\ldots &p_{11} && p_{22} && p_{33} && p_{44} && \ldots & & \ldots& & p_{m,m}&  & p_{11}  && \ldots\\
			\ldots   && p_{12} && p_{23} && p_{34} && \ldots && \ldots &&p_{m-1,m} & & p_{1,m} \\
			&\ldots && p_{13} && p_{24} && p_{35} && \ldots& &  p_{m-2,m} && p_{1,m-1} && \ldots \\
			&& && p_{14} && p_{25} && \ldots &&p_{m-3,m} && p_{1,m-2}  && \ldots\\
			& \ldots && \ldots && \ldots && \ldots && \ldots && \ldots \\
			\ldots && \ldots && p_{m-2,m} && p_{1,m-1} && p_{2,m}  && p_{13} && \ldots \\
			&  && p_{m-2,m-1} && p_{m-1,m} && p_{1,m} && p_{12} && \ldots \\
			&& \ldots&& p_{m-1,m-1} && p_{m, m} && p_{11} && p_{22} && \ldots &
		\end{tikzcd}
	}
\end{equation}

\begin{Bem} Note here: the $p_{ij}$ are treated as variables, namely, as the Pl\"ucker coordinates on the homogeneous coordinate ring of the Grassmannian $Gr(2,m)$, that is, $\CC[p_{ij}: 1 \leq i < j \leq m]/I_P$, where $I_P$ is the ideal generated by the Pl\"ucker relations. This ring has a cluster structure by \cite[Thm.~3]{Scott06}, where the clusters are formed by maximal sets of compatible Pl\"ucker coordinates (note that loc.~cit.~ shows the cluster structure of the coordinate ring for general $Gr(k,m)$, $k \in \{2, \ldots, \lfloor \frac{m}{2} \rfloor\}$, the result for $Gr(2,m)$ was also previously shown in \cite[Prop.~12.6]{FZ2003II}). We use the usual conventions in the context of cluster algebras: $p_{ii}=0$ and 
	we will consider the indices modulo $m$. The generating relations of $I_P$ are then the Ptolemy relations on the $m$-gon $P$ and they also give us the frieze relations (see e.g.~\cite[1.3]{BaurIsfahan}). 
\end{Bem}

Setting $p_{ii}=0$ and $p_{i,i+1}=1$ and $p_{i,i+2}=a_{i+1}$ yields the CC-frieze with quiddity $\{a_{i}\}_{i=1}^m$. 
Then $p_{ij}$ corresponds to the diagonal $[i,j]$ in the polygon $P$. The same frieze can also be obtained by putting $p_{ij}=1$ if and only if $[i,j] \in \mc{T}$. One can see that these $1$s uniquely determine the corresponding CC-frieze, this is explained in detail in \cite[Section 3]{BFGST-survey}.

Further, from the description of the frieze \eqref{Eqn:plueckers} one immediately gets a \emph{fundamental domain} for the entries of the frieze, that is, a region, such that any entry is contained in it and the frieze is just a repetition of the fundamental domain: 

\begin{lemma}
	Let $\mc{F}$ be a CC-frieze of width $m-3$ corresponding to a triangulation $\mc{T}$ of an $m$-gon $P$. Then a fundamental domain is given by the Pl\"ucker coordinates $p_{ij}$, $1 \leq i < j \leq m$, and there are ${m \choose 2}$ entries in it. Note that the Pl\"ucker coordinates $p_{i,i+1}=1$ correspond to the boundary edges $[i,i+1]$ and the other values of the $p_{ij}$ are determined by the triangulation $\mc{T}$ of $P$.
\end{lemma}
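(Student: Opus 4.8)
The plan is to show that each entry of $\mc{F}$ is the number attached to a diagonal or boundary edge of the $m$-gon $P$, that this number depends only on the unordered pair of endpoints read modulo $m$, and that the pairs with $1 \le i < j \le m$ therefore give exactly one representative of each such entry. Both the count ${m \choose 2}$ and the fundamental-domain property then drop out of a bookkeeping argument. First I would record the two symmetries of the frieze that are already available: Coxeter's horizontal periodicity of period $w+3 = m$, which gives $p_{i,j} = p_{i+m,j+m}$, and the glide relation \eqref{eq:Coxeter_6_3}, which for $w+3=m$ reads $p_{i,j}=p_{j,i+m}$. Applying the glide twice recovers the pure translation, so the two moves generate a group whose orbit of an index pair $(i,j)$ consists precisely of all $(i+km,j+km)$ and $(j+km,i+km)$, $k \in \ZZ$.

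The key step is the consequence that $p_{i,j}$ is a well-defined function of the unordered residue pair $\{\, i \bmod m,\ j \bmod m \,\}$: every element of the orbit above has the same residue pair, and conversely every index pair with those two residues lies in one such orbit, hence carries the same value. This is exactly the assertion that $p_{i,j}$ is the label of the diagonal (or edge) $[i,j]$ of $P$, in agreement with the identification of frieze entries and diagonals coming from Thm.~\ref{Thm:CoCo} and the continuant formula \eqref{eq:frieze_entry_cont}. With this in hand, a set of entries that meets every nontrivial orbit exactly once is a fundamental domain, and the pairs $1 \le i < j \le m$ do precisely this: as $i,j$ range over distinct elements of $\{1,\ldots,m\}$ they enumerate, bijectively, all unordered pairs of distinct residues, of which there are ${m \choose 2}$. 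This yields both the region and its cardinality, and the frieze is reconstructed from it by placing the value of $\{\,\overline{i},\overline{j}\,\}$ at every position $(i,j)$, i.e.\ by translating and glide-reflecting the fundamental domain.

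It then remains to describe the entries of this region. The pairs with $j=i+1$ give the $m-1$ representatives $p_{i,i+1}$, together with the wrap-around edge $p_{1,m}$ (coming from $p_{m,m+1}$), and these correspond to the boundary edges of $P$ and equal $1$ by the defining conditions of the frieze. The degenerate pairs $i \equiv j$ produce the constant zero rows $p_{i,i}=0$ and are excluded from the domain. All remaining $p_{i,j}$ with $j-i \ge 2$ are inner diagonals, whose values are fixed by the triangulation $\mc{T}$ through the frieze rule \eqref{eq:friezerule} and Thm.~\ref{Thm:CoCo}: one has $p_{i,j}=1$ exactly when $[i,j]\in\mc{T}$, and the others are determined from these by the frieze/Ptolemy relations. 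Comparing the labelling of \eqref{Eqn:plueckers} with that of \eqref{Eqn:frieze} confirms that these ${m \choose 2}$ representatives occupy the displayed triangular block and that periodicity fills out the rest.

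The main obstacle I anticipate is not any deep computation but the careful bookkeeping. One must pin down the exact dictionary between the two index conventions in \eqref{Eqn:frieze} and \eqref{Eqn:plueckers}, check that the chosen region $\{p_{i,j} : 1\le i<j\le m\}$ contains no two indices in the same orbit and misses none of the nontrivial orbits, and treat the wrap-around pairs (such as $p_{1,m}$) consistently so that the final count is \emph{exactly} ${m \choose 2}$ with no off-by-one error along the boundary between the fundamental domain and its translates.
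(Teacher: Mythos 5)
Your proposal is correct and follows essentially the same route as the paper, whose proof simply appeals to the labelling \eqref{Eqn:plueckers} and displays the triangular region \eqref{Eqn:fundamentaldomain}. You additionally make explicit, via Coxeter's glide identity \eqref{eq:Coxeter_6_3}, why each entry depends only on the unordered residue pair $\{\, i \bmod m,\ j \bmod m \,\}$ --- a detail the paper leaves implicit --- and your bookkeeping of the ${m \choose 2}$ representatives, the $m$ boundary edges equal to $1$, and the diagonals determined by $\mc{T}$ is accurate (apart from the harmless slip that the glide sends $(j+km,i+km)$ to $(j+km,\,i+(k+1)m)$ rather than $(j+km,i+km)$, which does not affect the residue-pair argument).
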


\begin{proof}
	This follows from the description of the entries of a frieze above and  \eqref{Eqn:plueckers}. The fundamental domain can be pictured as follows:
	\begin{equation} \label{Eqn:fundamentaldomain}
		\adjustbox{scale=0.9,center}{
			\begin{tikzcd}[row sep=0.35em, column sep=-0.35em]
				&& p_{1m} && p_{12} && p_{23} && \ldots && \ldots &&p_{m-2,m-1} & &  \\
				& && p_{2m} && p_{13} && p_{24} && \ldots& &  p_{m-3,m-1} &&  &&  \\
				&& && p_{3m} && p_{14} && \ldots &&p_{m-3,m-1} &&   && \\
				&  &&  && \ldots && \ldots && \ldots && \\
				&&  &&  && p_{m-2,m} && p_{1,m-1}  &&  &&  \\
				&  &&  &&  && p_{m-1,m} &&  &&\\
			\end{tikzcd}
		}
	\end{equation}
\end{proof}

	\begin{Bem}
		With this notation, the quiddity sequence is $a_1=p_{2,m}$, $a_2=p_{13}$, \ldots, $a_{m-1}=p_{m-2,m}$, $a_{m}=p_{1,m -1}$. For a lotus $\Lambda$ consider the associated polygon $P$ with vertices $v_1, \ldots, v_m$ so that vertex $1$ is at the point $(0,1)$, and label clockwise, that is, the base petal are the two vertices $v_1$ and $v_m$ and their quiddities are $a_1$ and $a_m$. Further note that the weights of the dual resolution graph are then $-a_2, \ldots, -a_{m-1}$.
	\end{Bem}

From a triangulation $\mc{T}$ of an $m$-gon one can also construct a \emph{quiver} $Q_\mc{T}$, which in turn will allow us to use representation theory, in particular cluster categories. Since we do not need the quiver explicitly here, we refer to \cite[Sections 3.1.3, 3.4.1]{Schifflerbook} for the construction of  $Q=Q_{\mc{T}}$. Then the cluster category $\mc{C}_Q$ is defined as $\mc{C}_Q:=D^b(\mmod kQ)/\tau^{-1}[1]$, where $\tau$ is the Auslander--Reiten-translation functor and $[1]$ is the shift functor on the triangulated category $D^b(\mmod kQ)$, see \cite{BMRRT06,CCS06}. Strictly speaking, we have to consider the generalized cluster category $\mc{C}_{(Q,W)}$ of a quiver $Q$ with potential $W$, introduced by Amiot \cite{Amiot}, see \cite[Section 2]{BFGST18} for details. 
For ease of notation, we will write $\mc{C}_Q$. \\
Consider the AR-quiver of $\mc{C}_Q$: this is a stable translation quiver and we can define the Hom- and Ext-hammocks, following the notation in \cite[Section 3.1.4]{Schifflerbook}. Let $M \in \mc{C}$ be indecomposable (this means that $M$ corresponds to a vertex in the AR-quiver). Denote by $\ms{R}_{\rightarrow}(M)$ be the maximal slanted rectangle in the AR-quiver of $\mc{C}$ with leftmost point $M$. Then $\ms{R}_{\rightarrow}(M)$ is the \emph{forward Hom--hammock} of $M$, that is, an indecomposable module $N \in \ms{R}_{\rightarrow}(M)$ if and only if $\Hom(M,N) \neq 0$. In particular, $\dim(\Hom(M,N))=1$ for any such $N$. Similarly, define the \emph{backward Hom--hammock} $\ms{R}_{\leftarrow}(M)$ as the maximal slanted rectangle in the AR-quiver of $\mc{C}$ with rightmost point $M$. This is the set of $N \in \mc{C}$ such that $\Hom(N,M) \neq 0$, in particular $\dim (\Hom(N,M))=1$ for any such $N$. See Fig.~\ref{Pic:Hom-hammocks} for an illustration of the two hammocks. \\
\begin{figure}[h!] 
	\begin{center}
		\adjustbox{scale=0.16,center}{
			\includegraphics{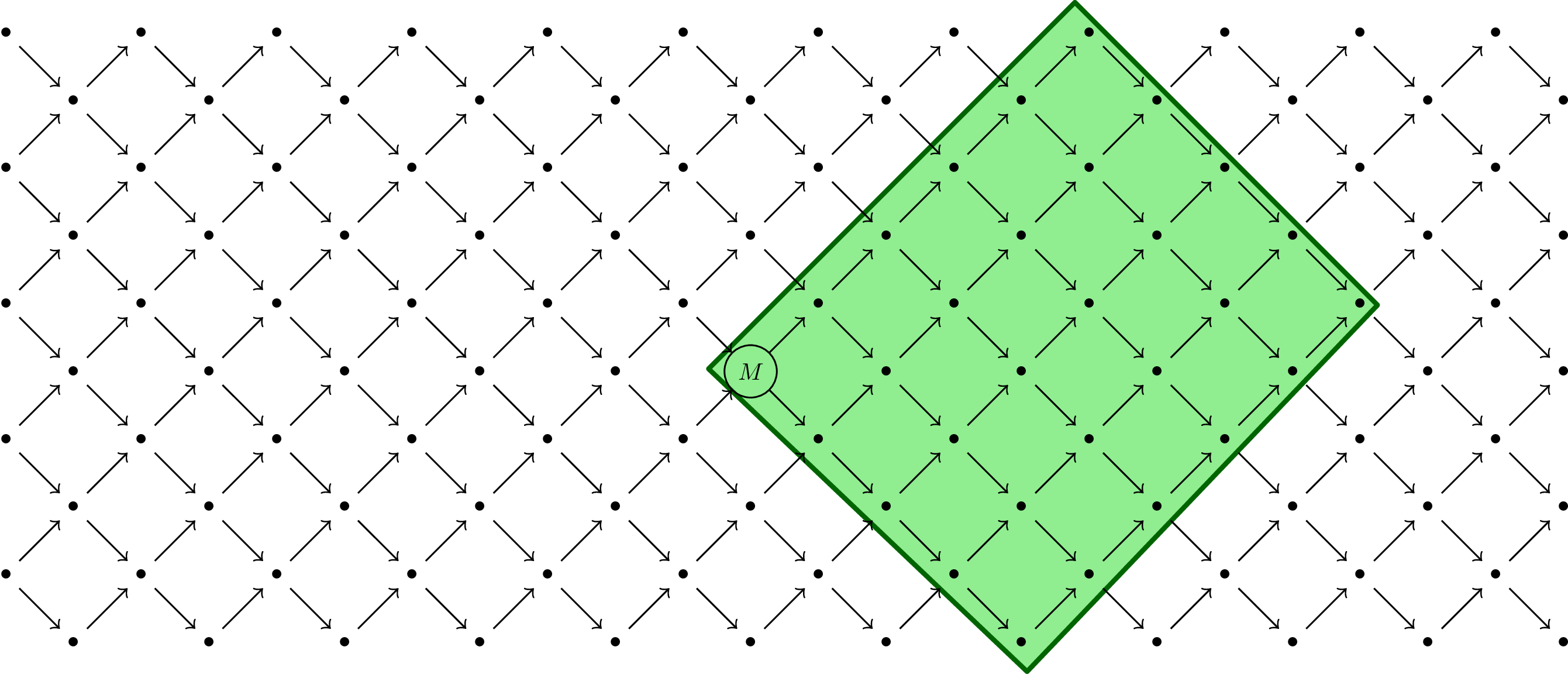} 
			\hspace{4cm}
			\includegraphics{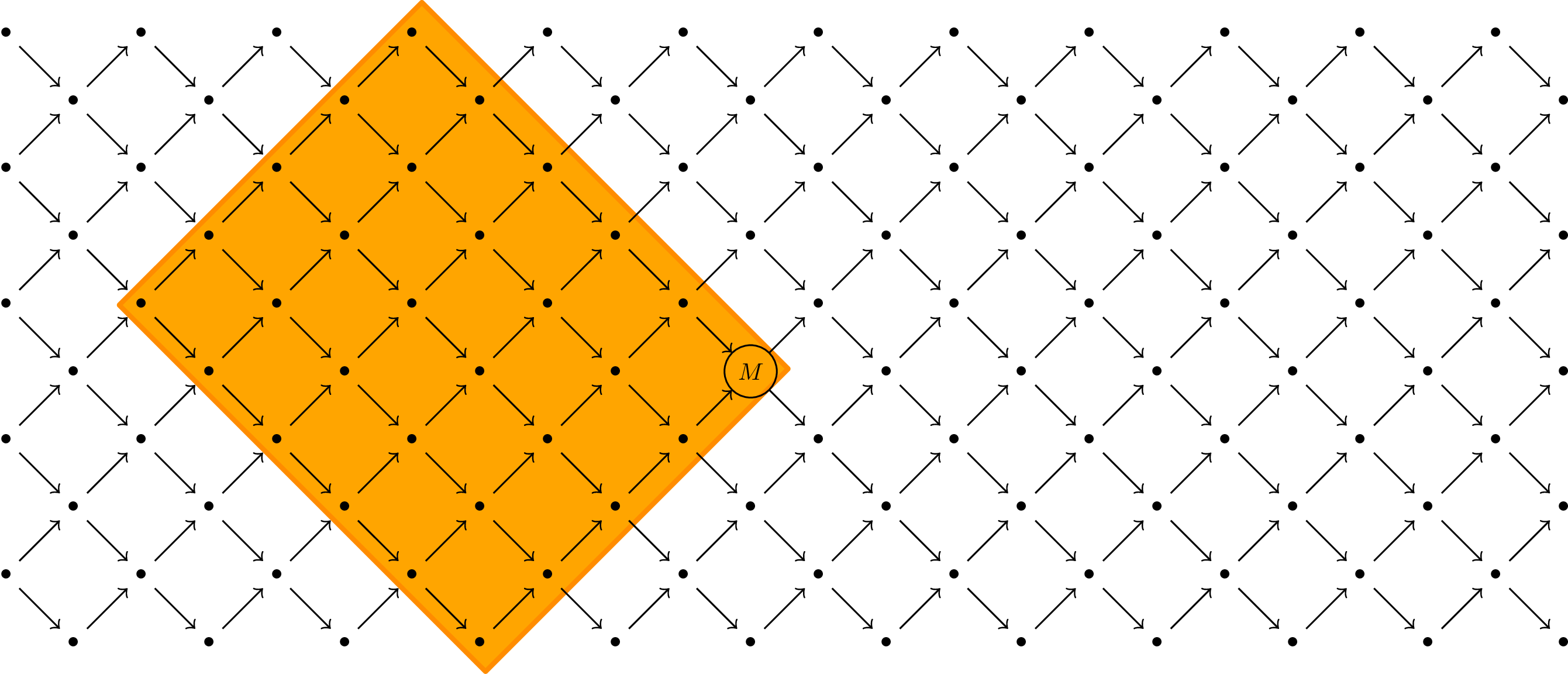} 
		}
	\end{center}
	\caption{Forward Hom-hammock $\ms{R}_{\rightarrow}(M)$ (left) and backward Hom-hammock $\ms{R}_{\leftarrow}(M)$ (right)   of the module $M$ in the AR-quiver. \label{Pic:Hom-hammocks}}
\end{figure}

Furthermore, the \emph{backward Ext--hammock} of $M$ is $\ms{R}_{\leftarrow}(\tau M)$, the set of $N \in \mc{C}$ such that $\Ext^1(M,N)=\Hom(M,N[1]) \neq 0$. Similarly, the set of $N \in \mc{C}$ such that $\Ext^1(N,M) \neq 0$ is described by the \emph{forward Ext--hammock} of $M$, denoted by $\ms{R}_{\rightarrow}(\tau^{-1}M)$, see Fig.~\ref{Pic:Ext-hammocks}. For proofs of these results see e.g. \cite[Sections 2.3, 2.4]{BFGST18}.

\begin{figure}[h!] 
	\begin{center}
		\adjustbox{scale=0.8,center}{
                         \includegraphics{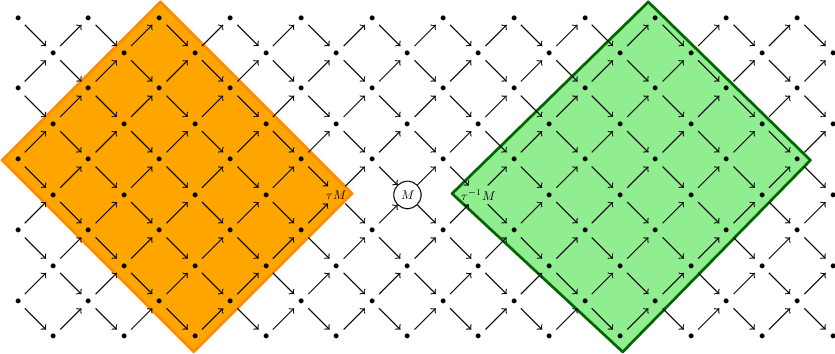}
		}
	\end{center}
	\caption{Forward Ext-hammock $\ms{R}_{\rightarrow}(\tau^{-1}M)$ (right) and backward Ext-hammock $\ms{R}_{\leftarrow}(\tau M)$ (left) of the module $M$ in the AR-quiver. \label{Pic:Ext-hammocks}
	}
\end{figure}

Using the labeling of \eqref{Eqn:plueckers}, the indecomposables in the AR-quiver will be denoted by $M_{ij}$ (corresponding to the entry $p_{ij}$). In the following, we will use the notation $M_{ij}$ and $p_{ij}$ interchangeably. Note that $\Ext^1(M_{ij},M_{kl}) \neq 0$ for indecomposable $M_{ij},M_{kl} \in \mc{C}$ if and only if the corresponding diagonals $[i,j]$, $[k,l]$ in the $m$-gon $P$ cross.  

To sum up the correspondence between triangulated polygons and cluster categories of type $A$: from a polygon $P$ with $m$ vertices and with triangulation $\mc{T}$ one obtains a quiver $Q=Q_\mc{T}$, which is mutation equivalent to a Dynkin quiver of type $A_{m-3}$. From the quiver one then constructs the cluster category $\mc{C}_Q$. The indecomposable objects in $\mc{C}_Q$ are in bijection with the diagonals in $P$ and the diagonals in $\mc{T}$ form a \emph{cluster tilting object} in $\mc{C}_Q$. 

For the corresponding frieze $\mc{F}(\mc{T})$ this means that also $\mc{C}_Q$ defines the same frieze (so $\mc{F}(\mc{T})=\mc{F}(Q)$). We have already discussed that the entries ``1'' of $\mc{F}(\mc{T})$ are precisely the ones corresponding to the diagonals in $\mc{T}$. One can get this frieze $\mc{F}(Q)$ also by looking at the AR-quiver of $\mc{C}_Q$ and evaluating the cluster character to $1$ at the indecomposable direct summands of the cluster tilting object. For details we refer to \cite{BFGST18}.

\subsection{Reduction of cluster categories and friezes} \label{Sub:reduction}

Reduction of a cluster category was first studied by Iyama and Yoshino \cite{IY08}. It was used for reduction of friezes in \cite{BFGST2} and more generally for reduction of Frobenius extriangulated categories in \cite{FMP}. Here we only briefly describe the operation on a frieze and the associated polygon, for proofs of the statements see loc.~cit. 

Let $\mc{C}=\mc{C}_Q$ be the cluster category of a Dynkin quiver $Q$. For any rigid indecomposable object $M \in \mc{C}$, that is, any $M$ such that $\Ext^1_{\mc{C}}(M,M)=0$, define 
$$M^\perp:=\{ X \in \mc{C}: \Hom_\mc{C}(M,X[1])=0\}=\{ X \in \mc{C}: \Hom_\mc{C}(X, M[1])=0 \} \ , $$
and the factor category
$$\mc{C}(M):=M^\perp/ \langle \add M \rangle \ . $$
We call $\mc{C}(M)$ the \emph{reduction (of \mc{C}) with respect to $M$}.
By \cite[Thm.~4.2 and 4.7]{IY08} the category $\mc{C}(M)$ is also triangulated and $2$-Calabi--Yau and there is a cluster character. Now going back to the corresponding frieze $\mc{F}$ of $\mc{C}$, in \cite[Prop.~5.3]{BFGST2} it was shown that if one reduces along a rigid module $M$ for which the corresponding frieze entry is equal to $1$, then  one obtains a so-called mesh frieze for $\mc{C}(M)$, the \emph{reduction of $\mc{F}$ with respect to $M$}.
This is explained in detail in \cite[Section 8]{FMP} (in the context of Frobenius extriangulated categories, which also applies here).

Concretely, for a frieze $\mc{F}$ coming from a triangulation $\mc{T}$ of an $m$-gon $P$, this means that cutting $P$ along a diagonal $[i,j]$ of the triangulation $\mc{T}$ (corresponding to the indecomposable module $M_{ij}$ in the AR-quiver of $\mc{C}$), one obtains two smaller polygons $P'$ and $P''$ and hence two smaller friezes $\mc{F}'$ and $\mc{F}''$:

\begin{lemma}
	Let $\mc{F}$ be a frieze coming from a triangulation $\mc{T}$ of an $m$-gon $P$ and assume that $[i,j] \in \mc{T}$ for some $ i, j \in \{ 1, \ldots, m \} $ with $ i < j $ and $|i-j|>1$. Denote by $\mc{C}$ the corresponding cluster category, and by $M_{ij}$ the module corresponding to $[i,j]$.   
	Then one obtains by reduction with respect to $M_{ij}$ the factor category $\mc{C}(M_{ij})$ and the reduction of the frieze with respect to $M_{ij}$. This mesh frieze yields  two friezes $\mc{F}'$ and $\mc{F}''$ from $\mc{F}$, as pictured in Fig.~\ref{Fig:reductionFrieze}.
\end{lemma}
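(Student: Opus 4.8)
The statement combines the geometric operation of cutting the polygon along $[i,j]$ with the representation-theoretic reduction of Iyama--Yoshino, so the plan is to match the two constructions. First I would record that since $[i,j] \in \mc{T}$, the corresponding indecomposable $M_{ij}$ is rigid: a diagonal does not cross itself, so $\Ext^1_{\mc{C}}(M_{ij},M_{ij})=0$. Moreover, being a diagonal of the triangulation, its associated frieze entry equals $1$, i.e.~$p_{ij}=1$ in the labeling of \eqref{Eqn:plueckers}. Thus $M_{ij}$ satisfies exactly the hypotheses needed to invoke the reduction results cited above.

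Next I would carry out the geometric cut. Removing the interior of the diagonal $[i,j]$ splits $P$ into two subpolygons: $P'$ on the vertices $\{i,i+1,\ldots,j\}$ (with $j-i+1$ vertices) and $P''$ on the vertices $\{j,j+1,\ldots,m,1,\ldots,i\}$ (with $m-(j-i)+1$ vertices), where in each case the segment $[i,j]$ becomes a boundary edge. Because no diagonal of $\mc{T}$ crosses $[i,j]$, every diagonal of $\mc{T}$ lies entirely in $P'$ or entirely in $P''$; hence $\mc{T}$ restricts to triangulations $\mc{T}'$ of $P'$ and $\mc{T}''$ of $P''$. By Conway--Coxeter (Thm.~\ref{Thm:CoCo}), these determine friezes $\mc{F}'$ and $\mc{F}''$.

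The key identification is on the categorical side. Using that $\Ext^1(M_{ij},M_{kl})\neq 0$ if and only if the diagonals $[i,j]$ and $[k,l]$ cross, the perpendicular category $M_{ij}^\perp$ consists precisely of those indecomposables whose diagonals do not cross $[i,j]$, that is, of the diagonals contained in $P'$ or in $P''$ (together with $[i,j]$ itself). Passing to the factor category $\mc{C}(M_{ij})=M_{ij}^\perp/\langle\add M_{ij}\rangle$ deletes the summand $M_{ij}$ and, by \cite[Thm.~4.2, 4.7]{IY08}, produces a triangulated $2$-Calabi--Yau category; concretely it decomposes as the product of the cluster categories of $(P',\mc{T}')$ and $(P'',\mc{T}'')$. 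Finally, since $p_{ij}=1$, the reduction of the frieze with respect to $M_{ij}$ is a mesh frieze by \cite[Prop.~5.3]{BFGST2} (see also \cite[Section~8]{FMP}), and the product decomposition of $\mc{C}(M_{ij})$ exhibits this mesh frieze as consisting of the two friezes $\mc{F}'$ and $\mc{F}''$, as depicted in Fig.~\ref{Fig:reductionFrieze}.

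The main obstacle I anticipate is this last matching step: one must check that the entries of $\mc{F}$ lying in the triangular region cut out by the diagonal are exactly the Pl\"ucker coordinates $p_{kl}$ with both indices in $\{i,\ldots,j\}$ (respectively in the complementary index set), so that they reproduce the entries of $\mc{F}'$ (respectively $\mc{F}''$) under the reduction. This is essentially bookkeeping once the product decomposition of $\mc{C}(M_{ij})$ is in place, and it is the content of the cited results, so no new computation is required beyond tracking indices.
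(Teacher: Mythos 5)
Your proposal is correct and follows essentially the same route as the paper: both rest on the Iyama--Yoshino reduction machinery as packaged in \cite[Thm.~8.7, Example~8.8]{FMP} (and \cite[Prop.~5.3]{BFGST2}), keyed to the observations that $[i,j]\in\mc{T}$ forces $p_{ij}=1$ and that $M_{ij}^\perp$ is detected by non-crossing of diagonals. The paper's proof simply delegates the product decomposition and the entry-matching to those references, whereas you spell that bridge out a little more explicitly; no substantive difference.
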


\begin{proof}
	The categorical proof of this fact is in \cite[Thm.~8.7 and Example 8.8]{FMP}. Note that loc.~cit.~ is in terms of Grassmannian cluster categories, which are a Frobenius version of the categories $\mc{C}$ we consider here. The Pl\"ucker frieze of \cite[Example 8.8]{FMP} is our frieze $\mc{F}$ with entries $p_{ij}$ as in \eqref{Eqn:plueckers}. See below,  Fig.~\ref{Fig:reductionFrieze}, for a schematic picture of the reduction at the entry $p_{ij}$ and the two smaller friezes $\mc{F}'$ and $\mc{F}''$. In particular, note that the entry $p_{ij}$ becomes part of the boundary of both of these friezes (the boundary rows consist of entries $1$ only). Since the diagonal $[i,j] \in \mc{T}$, we have $p_{ij}=1$ and indeed all the entries in $\mc{F}'$ and $\mc{F}''$ satisfy the frieze relations. 
	
	\begin{figure}[h!] 
		\begin{center}
			\adjustbox{scale=0.3,center}{
				\includegraphics{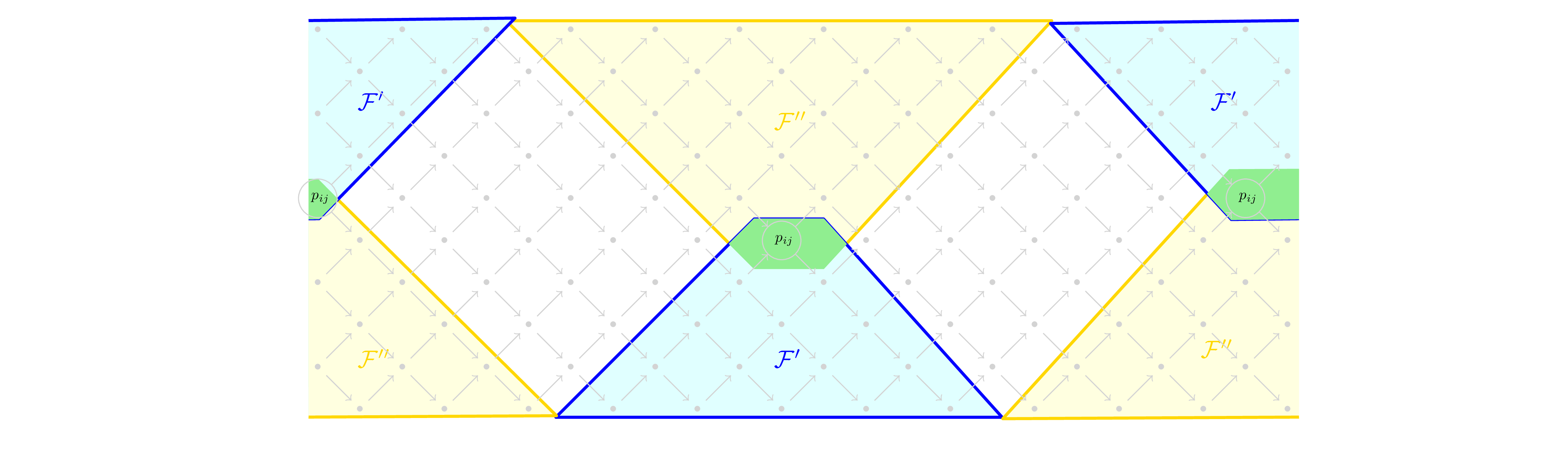} 
			}
		\end{center}
		\caption{Picture with the friezes $\mc{F}'$ and $\mc{F}''$. \label{Fig:reductionFrieze}}
	\end{figure}
\end{proof}

In our context, on the singularity side, we have fixed a cross $(L,L')$ (i.e., the boundary diagonal $[1,m]$ of a lotus $\Lambda(f)$). Considering $\Lambda(f)$ as a triangulated polygon $P$, this boundary diagonal will be contained in either $P'$ or $P''$. 
We then define $P({[i,j]})$ the \emph{reduction of $P$ with respect to $[i,j]$} as 
$$ P({[i,j]}) := \begin{cases} P' &  \text{ if } [1,m] \in P' \ , \\
	P'' & \text{ if } [1,m] \in P'' \ .
\end{cases} $$
Then we say that the frieze from $P({[i,j]})$, denoted by $\mc{F}({[i,j]})$, is the  \emph{reduction of $\mc{F}$ with respect to $[i,j]$}. We can explicitly describe the quiddity of $\mc{F}({[i,j]})$: in Lemma \ref{Lem:Friezered}  we start with the classical case of cutting off an ear that was already considered by Conway and Coxeter (and is part of their induction argument for proving Thm.~\ref{Thm:CoCo}, cf.~Remark~\ref{Rmk:cuttingear}) and then deal with the general case in Prop.~\ref{Prop:generalreduction}.

\begin{lemma} \label{Lem:Friezered}
	Let $\mc{F}$ be a frieze coming from a triangulation $\mc{T}$ of an $m$-gon $P$ with quiddity sequence $\{a_i\}_{i=1}^{m}$. Assume that the diagonal $[i-1,i+1] \in \mc{T}$, where $i-1,i+1$ are the representatives modulo $m$ in $\{1, \ldots, m\}$. Then the reduction $\mc{F}({[i-1,i+1]})$ is 
	\begin{enumerate}[(1)]
		\item the trivial frieze if $i=1$ or $i=m$ (note: in those cases, the considered diagonals are $[2, m]$, $[1,m-1]$, respectively),
		\item the frieze with quiddity
		$$ (a_1,  \ldots, a_{i-2}, p_{i-2,i+1}, p_{i-1, i+2}, a_{i+2}, \ldots, a_{m}) $$
		if $ i \in \{ 2, \ldots, m-1 \}$. 
	\end{enumerate}
\end{lemma}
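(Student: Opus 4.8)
The plan is to translate the reduction operation into the combinatorics of the triangulated polygon $P$ and then identify the two new quiddity entries with entries of the original frieze by means of the Ptolemy (frieze) relations. The starting observation is that $[i-1,i+1]\in\mc{T}$ forces $v_i$ to be an ear: the diagonal bounds the triangle $\{i-1,i,i+1\}$ together with the two boundary edges $[i-1,i]$ and $[i,i+1]$, so $a_i=1$. Cutting $P$ along $[i-1,i+1]$ therefore peels off this ear triangle from the $(m-1)$-gon $P'$ on the remaining vertices $\{1,\dots,m\}\setminus\{i\}$, and the task is to decide which of the two pieces contains the fixed edge $[1,m]$ and to compute its quiddity.

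First I would dispose of the boundary cases $i=1$ and $i=m$, where (reading indices modulo $m$) the relevant diagonals are $[2,m]$ and $[1,m-1]$. Here the ear triangle has the distinguished edge $[1,m]=[m,1]$ as one of its sides: for $i=1$ the triangle is $\{m,1,2\}$ and for $i=m$ it is $\{m-1,m,1\}$. By the definition of $P([i-1,i+1])$ as the piece containing $[1,m]$, the reduction is exactly this triangle, a $3$-gon, whose frieze has width $3-3=0$; that is, the trivial frieze. This settles part (1).

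For part (2), with $i\in\{2,\dots,m-1\}$, neither $1$ nor $m$ equals $i$ and they remain adjacent, so the edge $[1,m]$ survives in $P'$ and hence $P([i-1,i+1])=P'$. The induced triangulation of $P'$ is $\mc{T}$ with the ear triangle deleted and $[i-1,i+1]$ promoted to a boundary edge; consequently the number of incident triangles is unchanged at every vertex except $v_{i-1}$ and $v_{i+1}$, each of which loses precisely the ear triangle. Thus the quiddity sequence of $P'$ is $(a_1,\dots,a_{i-2},\,a_{i-1}-1,\,a_{i+1}-1,\,a_{i+2},\dots,a_m)$. To match the statement I identify these two modified entries with frieze entries using $a_k=p_{k-1,k+1}$ and the Ptolemy relation $p_{ac}p_{bd}=p_{ab}p_{cd}+p_{ad}p_{bc}$. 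Applying it to the quadruple $(a,b,c,d)=(i-2,i-1,i,i+1)$ and using $p_{i-1,i+1}=1$ (the diagonal lies in $\mc{T}$) together with all four sides equal to $1$ gives $p_{i-2,i}=1+p_{i-2,i+1}$, so $p_{i-2,i+1}=a_{i-1}-1$; applying it to $(i-1,i,i+1,i+2)$ gives $p_{i-1,i+2}=a_{i+1}-1$. Substituting yields exactly the claimed quiddity $(a_1,\dots,a_{i-2},p_{i-2,i+1},p_{i-1,i+2},a_{i+2},\dots,a_m)$.

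The algebra is routine; the steps demanding care are purely bookkeeping. The one genuine decision is verifying which piece the fixed edge $[1,m]$ belongs to, since this is what selects $P'$ over $P''$ in the definition of $P([i-1,i+1])$ — it lands in the ear triangle in the boundary cases and in the large piece otherwise — and correctly interpreting $i-1,i+1$ modulo $m$ there. I would also record, for consistency with the previous lemma, that the new entries are genuinely positive: for $m\ge 4$ the fact that $[i-1,i+1]$ is an \emph{inner} diagonal forces a further triangle at each of $v_{i-1},v_{i+1}$ beyond the ear, so $a_{i-1},a_{i+1}\ge 2$ and $\mc{F}([i-1,i+1])$ is again a CC-frieze.
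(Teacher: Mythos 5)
Your proof is correct, but it takes a genuinely different route from the paper's. The paper argues inside the cluster category: it realizes the reduction as the deletion of the two Ext-hammocks $\ms{R}_{\rightarrow}(M_{i+1,i+2})$ and $\ms{R}_{\leftarrow}(M_{i-2,i})$ from the AR-quiver, i.e.\ the deletion of two diagonals of entries from the frieze, reads the new quiddity $(\ldots,p_{i-2,i+1},p_{i-1,i+2},\ldots)$ directly off the resulting picture, and characterizes case (1) by whether $p_{1m}$ lands in the triangle piece. You instead work entirely on the polygon: you cut off the ear at $v_i$, compute the quiddity of $P'$ by the classical Conway--Coxeter ear-reduction $(\ldots,a_{i-1}-1,\widehat{a_i},a_{i+1}-1,\ldots)$, and then convert $a_{i-1}-1$ and $a_{i+1}-1$ into the frieze entries $p_{i-2,i+1}$ and $p_{i-1,i+2}$ via the Ptolemy relations --- which is precisely the computation of Remark~\ref{Rmk:cuttingear} run in reverse (there the diamond rule $a_{i-1}\cdot 1-1\cdot p_{i-2,i+1}=1$ is used to recover the Conway--Coxeter formula \emph{from} the lemma). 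What your argument buys is elementarity: no categorical input is needed beyond the definition of $\mc{F}([i-1,i+1])$ as the frieze of the piece containing $[1,m]$, and you get for free that the result is again a positive integral frieze. What it gives up is uniformity: the hammock-deletion argument is the one that extends essentially verbatim to the general reduction along an arbitrary diagonal $[i,j]$ in Prop.~\ref{Prop:generalreduction}, where the discarded piece is no longer a single triangle and the ear-cutting computation does not apply. Your handling of the boundary cases $i\in\{1,m\}$ and the modular indexing there is correct.
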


\begin{proof}
	Note that reducing along a diagonal $[i-1,i+1]$ means that we  divide the polygon $P$ into an $(m-1)$-gon $P'$ and a triangle $P''$. In the frieze, this means that we choose the entry $p_{i-1,i+1}$ in the first row. Then we can reduce categorically: $p_{i-1,i+1}$ corresponds to the module $M_{i-1,i+1}$ and the AR-quiver of $\mc{C}(M_{i-1,i+1})$ is given by the  AR-quiver of $\mc{C}$ with the indecomposables in the two regions $\ms{R}_{\rightarrow}(\tau^{-1}M_{i-1,i+1})=\ms{R}_{\rightarrow}(M_{i+1,i+2})$ and $\ms{R}_{\leftarrow}(\tau M_{i-1,i+1})=\ms{R}_{\leftarrow}(M_{i-2,i})$ deleted. This means that we delete the two diagonals $(p_{i,i+2}, \ldots, p_{i-2,i})$ and $(p_{i-2,i}, \ldots,  p_{i,i+2})$ in the frieze.  Note that in fact the two diagonals contain the same elements. For a picture, see \eqref{Fig:1strowred} (with $m=9$ and $i=5$ and the two diagonals and $p_{46}$ marked).
	
	\begin{equation}\label{Fig:1strowred}
		\adjustbox{scale=0.25,center}{
			\includegraphics{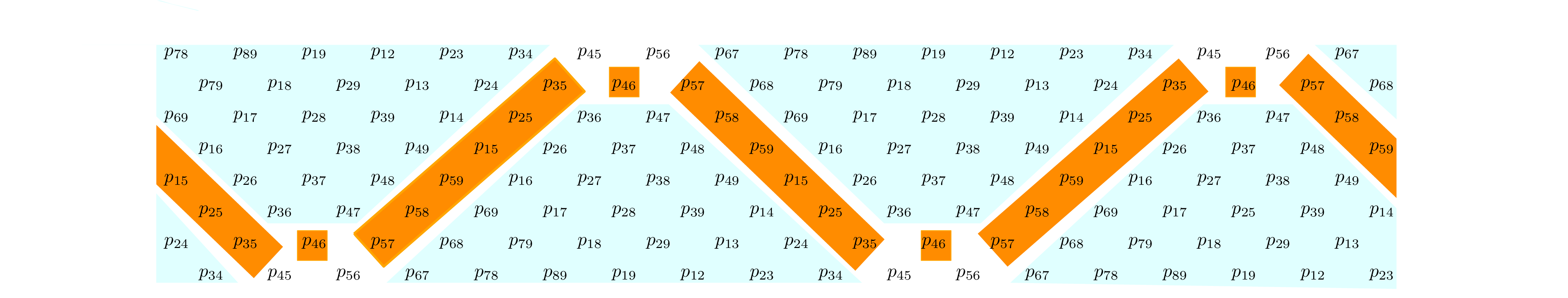}
		}
	\end{equation}

	Clearly, $i=1$ or $i=m$ if and only if  $p_{1m}$ is contained in the trivial frieze $\mc{F}''$ corresponding to the triangle $P''$. Now assume that $ i \in \{ 2, \ldots, m-1 \}$. The quiddity sequence of $\mc{F}$ is given as 
	\[ 
	\begin{array}{c}
		(a_1, a_2, \ldots, a_{i-2}, a_{i-1}, a_{i}, a_{i+1}, a_{i+2}, \ldots, a_{m-1}, a_{m}) = \\[5pt]
		= (p_{2,m}, p_{13}, \ldots, p_{i-3,i-1}, p_{i-2,i}, p_{i-1,i+1}, p_{i,i+2}, p_{i+1,i+3}, \ldots, p_{m-2,m}, p_{1,m-1}) \ . 
	\end{array}
	\]
	Then the new quiddity can be read off similarly as in \eqref{Fig:1strowred} as
	\begin{equation} \label{Eq:reducedquiddity} (p_{m,2}, p_{13}, \ldots, p_{i-3,i-1}, p_{i-2,i+1}, p_{i-1, i+2}, p_{i+1,i+3}, \ldots, p_{m-1,1}) \ .
	\end{equation}
	
\end{proof}

\begin{Bem} \label{Rmk:cuttingear}The quiddity for the reduced frieze \eqref{Eq:reducedquiddity} is precisely the formula derived in \cite[Question 23]{CoCo1} (also cf. \cite[Lemma 9]{HenrySoizic}): In Eq. (4) of \cite{HenrySoizic} it is shown that for a frieze $\mc{F}$ of width $m-3$ with quiddity $\{ a_i \}_{i=1}^{m}$ with some $a_i=1$, then one obtains a frieze $\mc{F}'$ of width $m-4$ and with quiddity
\begin{equation}\label{Eq:CCreduction} 
		\begin{array}{l}
			(a_1, \ldots, a_{i-1}, a_i=1, a_{i+1}, a_{i+2}, \ldots, a_{m})  \rightarrow  (a_1, \ldots, a_{i-1}-1, a_{i+1}+1, a_{i+2}, \ldots, a_{m}) \ .
		\end{array}
	\end{equation}
	
	In our setting,  $[i-1,i+1] \in \mc{T}$ means that $a_i=p_{i-1,i+1}=1$. Reducing the frieze with Lemma \ref{Lem:Friezered} shows that the new quiddity sequence is given as \eqref{Eq:reducedquiddity}, that is, $a_{i-1}$ is replaced by $p_{i-2,i+1}$, $a_i$ is deleted, and $a_{i+1}$ is replaced by $p_{i-1, i+2}$. The diamond rule for $\mc{F}$ shows that $a_{i-1}\cdot 1 - 1 \cdot p_{i-2,i+1} =1$ and likewise, $1\cdot a_{i+1}-1 \cdot p_{i-1,i+2}=1$, which proves \eqref{Eq:CCreduction}.
\end{Bem} 

\begin{proposition}[General reduction] \label{Prop:generalreduction}  Let $\mc{F}$ be a frieze coming from a triangulation $\mc{T}$ of an $m$-gon $P$ with quiddity sequence $\{a_i\}_{i=1}^{m}$. Assume that the diagonal $[i,j] \in \mc{T}$, 
	for some $ i, j \in \{ 1, \ldots, m \} $ with $ i < j $ and $|i-j|\geq 2$. Then the reduction $\mc{F}({[i,j]})$ is 
	\begin{enumerate}[(1)]
	\item  if $1 \leq i < j \leq m-1$: the frieze of width $m+i-j-2$, with quiddity sequence 
		$$(a_1, \ldots, a_{i-1},p_{i-1,j},p_{i,j+1},a_{j+1}, \ldots, a_m) \ ,$$ OR
		\item if $j=m$: the frieze of width $i-2$  with quiddity sequence 
		$$(a_1, \ldots, a_{i-1}, p_{i-1,m}, p_{1i}). \ $$
		
	\end{enumerate}
\end{proposition}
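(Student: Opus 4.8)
The plan is to avoid any continuant computation and instead reduce the statement to the definition of $\mc{F}([i,j])$ together with the single elementary relation $a_k = p_{k-1,k+1}$. First I would recall that, by the categorical reduction (the preceding lemma, cf.~\cite{IY08,BFGST2,FMP} and Fig.~\ref{Fig:reductionFrieze}), cutting $P$ along $[i,j]\in\mc{T}$ produces two sub-polygons $P'$ and $P''$ carrying the restricted triangulations, and $\mc{F}([i,j])$ is the frieze of whichever of them contains the boundary edge $[1,m]$; moreover every Pl\"ucker coordinate $p_{k\ell}$ attached to a diagonal $[k,\ell]$ with both endpoints in that sub-polygon keeps exactly its value in $\mc{F}$, the cut edge $[i,j]$ becoming a boundary edge. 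It then remains only to pin down the surviving sub-polygon and to read off its quiddity sequence.

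Second, I would identify $P([i,j])$ explicitly. For $1\le i<j\le m-1$ the edge $[1,m]$ lies in the piece $P''$ with vertices $v_j,v_{j+1},\dots,v_m,v_1,\dots,v_i$ (listed in cyclic order $v_1,\dots,v_i,v_j,\dots,v_m$), which has $m-(j-i)+1$ vertices, giving width $m+i-j-2$; for $j=m$ it lies in the piece with vertices $v_1,\dots,v_i,v_m$, of width $i-2$. In both cases the only vertices whose set of incident triangles changes are the two endpoints of the cut: every triangle of $\mc{T}$ incident to a vertex of $P''$ other than $v_i,v_j$ (resp.~$v_i,v_m$) lies entirely in $P''$, since no edge of $\mc{T}$ crosses $[i,j]$, so the quiddity there is unchanged and equals the corresponding $a_k$.

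The crux is the identity that in any triangulated polygon the quiddity at a vertex equals the frieze value of the diagonal joining its two boundary-neighbours; this is just the relation $a_k=p_{k-1,k+1}$ of \eqref{Eqn:plueckers} read off from Def.~\ref{Def:friezetriang}. Applying it inside $P''$: at the cut-endpoint $v_i$ the boundary-neighbours become $v_{i-1}$ and $v_j$ (the new edge $[i,j]$), so its quiddity is the value of the diagonal $[i-1,j]$, while at $v_j$ they become $v_i$ and $v_{j+1}$, giving $[i,j+1]$. Since $[i-1,j]$ and $[i,j+1]$ cut off the triangles $[v_{i-1},v_i,v_j]$ and $[v_i,v_j,v_{j+1}]$ inside $P''$ — in particular both are genuine diagonals of $P''$ and neither crosses $[i,j]$ (they share an endpoint) — they are surviving diagonals, so by the first step their frieze values are precisely the original $p_{i-1,j}$ and $p_{i,j+1}$. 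This is the quiddity sequence of case~(1); case~(2) is identical, the second endpoint being $v_m$ with $P''$-neighbours $v_i$ and $v_1$, producing $p_{i-1,m}$ and $p_{1,i}$. The base case $j-i=2$ recovers Lemma~\ref{Lem:Friezered}, which is a useful consistency check.

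The main obstacle is not conceptual but purely a matter of bookkeeping: making sure the cut edge is recorded as a boundary edge of $P''$ so that the quiddity--Pl\"ucker relation is applied with the correct neighbours, tracking the cyclic indices in the corner cases $i=1$ and $j=m$, and confirming that $P''$ (rather than $P'$) is the component containing $[1,m]$ so that the width count and the labelling of the quiddity sequence come out as stated.
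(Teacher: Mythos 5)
Your proposal is correct and follows essentially the same route as the paper: both rest on the categorical reduction established in the preceding lemma (cutting $P$ along $[i,j]$ yields two sub-friezes whose entries are the surviving entries of $\mc{F}$), identify $P([i,j])$ as the piece containing the edge $[1,m]$, and then read off the new quiddity sequence. The only difference is presentational: where the paper reads the new quiddity row off the frieze array after deleting the Ext-hammocks (pointing to Fig.~\ref{Fig:generalij}), you derive the same entries on the polygon side via the relation $a_k=p_{k-1,k+1}$ applied to the new boundary-neighbours of the two cut endpoints, which makes explicit the step the paper leaves to a figure.
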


\begin{Bem} \label{Rmk:parts-of-frieze}
If we consider $p_{ij}$ in the fundamental domain chosen in \eqref{Eqn:fundamentaldomain} and $\mc{F}'$ and $\mc{F}''$ as in Fig.~\ref{Fig:upperlower}, then we will call $\mc{F}''$ the \emph{upper part} (which is completely contained in the fixed fundamental region) and $\mc{F}'$ the \emph{lower part} of the frieze $\mc{F}$.\\
	\begin{figure}[h!] 
		\begin{center}
			\adjustbox{scale=1,center}{
				\includegraphics{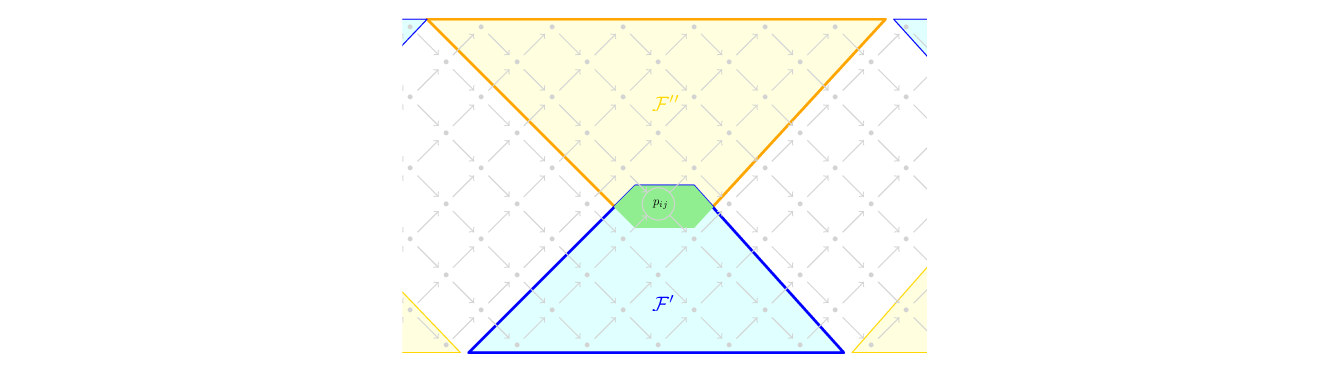} 
			}
		\end{center}
		\caption{Picture with the upper part $\mc{F}''$ and lower part $\mc{F}'$ of the frieze. \label{Fig:upperlower}}
	\end{figure}

\end{Bem}

\begin{proof} The proof is similar to the proof of Lemma \ref{Lem:Friezered}: for $[i,j] \in \mc{T}$ we reduce categorically to $\mc{C}(M_{ij})$. The AR-quiver of this category is also given as the AR-quiver of $\mc{C}$ with the rectangular regions $\ms{R}_\rightarrow(M_{ij})=\ms{R}_\leftarrow(\tau M_{ij})$ deleted. Consider the fundamental domain as in \eqref{Eqn:fundamentaldomain}.
One sees that $p_{1m}$ is in the lower part of the frieze
 if and only if $j < m$. Similarly, the new quiddity sequence can be read off the frieze (see Fig.~\ref{Fig:generalij}, for $m=9$ and $[i,j]=[4,9]$).
	\begin{figure}[h!] 
		\adjustbox{scale=0.72,center}{
			\includegraphics{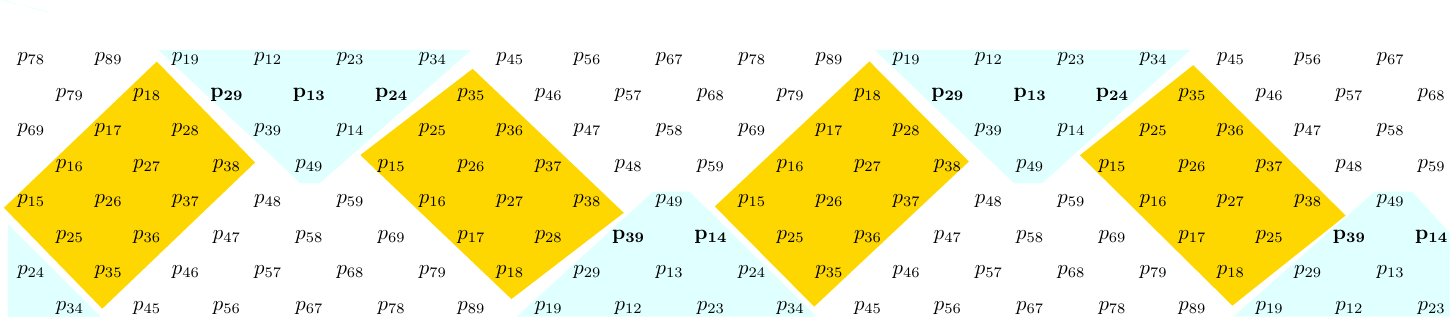}
		}
		\caption{Frieze reduced at $p_{49}$, the quiddity of $\mc{F}'$ is given by the highlighted {\bf bold} numbers.} \label{Fig:generalij}
	\end{figure}

	In the case $j=m$, $p_{1m}$ is in the upper part of the frieze and a similar argument as for the other case shows how the quiddity sequence changes.
\end{proof}

\begin{Qu}
 Is there a more elementary way to show that the reduction of $\cF$ satisfies the frieze relations and in particular, that $\cF([i,j])$ is a frieze?
\end{Qu}

\subsection{Partial resolutions and reduction of friezes} In this section the reduction of a frieze $\cF(\Lambda(f))$ as in the last Section \ref{Sub:reduction} will be related to partial lotuses of the corresponding curve singularity $C=V(f)$.

\begin{defi}
	\phantomsection
	\label{Def:partial_resol}
	\begin{enumerate}[(1)]
		\item Let $\pi=\varphi_{\ell} \circ \cdots \circ \varphi_1 \colon \widetilde S \xrightarrow{} S$ be a sequence of blowups corresponding to a subdivision of the fan $\sigma_0$ by rays $w_1, \ldots, w_\ell$, i.e., we choose a cross $(L,L')$ in $S$ and then each blowup is the blowup in the origin of one of the charts from the previous blowups. 
		The \emph{lotus associated to $\pi$} is defined as $\Lambda(\pi):=\Lambda(\mc{E})$, where $\mc{E}$ is the set of the slopes $\lambda_i$ of the $w_i$. We call the frieze $\cF(\pi):=\cF(\Lambda(\pi))$ the \emph{frieze of $\pi$.}
		\item	Let $\pi \colon \widetilde S \xrightarrow{} S$ be the minimal resolution of the Newton non-degenerate curve $(C,s)\subseteq (S,s)$ given by $f \in \CC\{x,y\}$, and let $\Lambda(f)$ be the associated lotus. Note that $\pi=\varphi_{\ell} \circ \cdots \circ \varphi_1$ can be written as a sequence of blowups in points and thus $\Lambda(f)=\Lambda(\pi)$. Then we call a sequence of blowups $\pi'=\varphi'_{\ell'} \circ \cdots \circ \varphi'_{1}\colon \widetilde{S'} \xrightarrow{} S$ a \emph{partial resolution of $C$} if $\Lambda(\pi')\subset \Lambda(f)$ is a sublotus.
	\end{enumerate}
\end{defi}

Here partial resolution means in particular that the strict transform $C'$ of $C$ under $\pi'$ is partially resolved in the sense that the Puiseux characteristics $(m; \beta_1, \ldots, \beta_g)$ of $C'$ is strictly smaller than that of $C$, for notation and precise statement see \cite[Thm.~3.5.5]{Wall}. 

Note that a sublotus of $\Lambda(f)$ must always contain the base petal, that is, at least the first blowup must be $\varphi_1$.

The main result of this section is to connect reduction of friezes to partial resolutions: 
\begin{Thm}
	\label{Thm:Partial_reduction}
	Consider the curve $C=V(f)$, where $f$ is assumed to be Newton non-degenerate, and its minimal resolution $\pi$.  
	Then the frieze $\cF(\pi')$ of any partial resolution $\pi'$ of $\pi$ is obtained as a reduction of the frieze of $\pi$. In particular, if the dual resolution graph $\Gamma(f)$ is of type $A_{m-2}$ with self-intersection numbers $\{-a_i\}_{i=2}^{m-1}$, then  the dual graph of the exceptional curves appearing in $\pi'$ is of type $A_k$ for some $k \leq m-2$ and the self-intersection numbers $\{-b_j\}_{j=1}^{k}$ correspond to negatives of entries in the frieze of $\pi$.
\end{Thm}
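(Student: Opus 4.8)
The plan is to transport the passage from the minimal resolution $\pi$ to the partial resolution $\pi'$ to the level of lotuses and their triangulated polygons, and then to invoke the reduction machinery of Section~\ref{Sub:reduction}. First I would fix the dictionary: by Definition~\ref{Def:partial_resol}, $\pi'$ is a partial resolution exactly when $\Lambda(\pi')\subseteq\Lambda(f)=\Lambda(\pi)$ is a sublotus relative to the cross $(L,L')$, i.e.\ relative to the base $(e_1,e_2)$. By Theorem~\ref{Thm:Lotus-dual} the full lotus $\Lambda(f)$ corresponds to a triangulated $m$-gon $P$ with triangulation $\mc{T}$ whose base edge is $[1,m]$, whose $m-2$ lateral vertices are the exceptional divisors, and whose triangles are exactly the petals of $\Lambda(f)$; its frieze is $\cF(\pi)=\cF(\mc{T})$.

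Next I would identify the sublotus $\Lambda(\pi')$ with a sub-triangulation of $P$. Being a lotus relative to the same base, $\Lambda(\pi')$ is a union of petals containing the base petal and stable under taking parents; equivalently it is a union of triangles of $\mc{T}$ forming a connected sub-polygon $P'$ of $P$ that still contains the base edge $[1,m]$. The petals removed in passing from $\Lambda(f)$ to $\Lambda(\pi')$ form an upward-closed subforest of the parent-tree of petals, and each of its connected components is separated from $P'$ by exactly one diagonal $[i,j]\in\mc{T}$. Thus $P'$ is obtained from $P$ by cutting along these diagonals, and because it contains $[1,m]$ it is precisely the retained piece $P(\pi')$ of the reduction defined in Section~\ref{Sub:reduction}.

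I would then apply the reduction results. Cutting $P$ along a single diagonal $[i,j]\in\mc{T}$ and keeping $P(\pi')$ gives, by Proposition~\ref{Prop:generalreduction} (with Lemma~\ref{Lem:Friezered} covering the case of an ear), exactly the reduced frieze $\cF([i,j])$; iterating over the finitely many cutting diagonals yields $\cF(\pi')$. This proves that $\cF(\pi')$ is obtained as a (possibly iterated) reduction of $\cF(\pi)$. For the refined statement, assume $\Gamma(f)$ is of type $A_{m-2}$, so that $\partial_+\Lambda(f)$ is a path. Then $\partial_+\Lambda(\pi')$ is again a path on its $k\leq m-2$ lateral vertices, so the dual graph of the exceptional curves of $\pi'$ is of type $A_k$. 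By Theorem~\ref{Thm:Lotus-dual}(3) applied to $\Lambda(\pi')$, the weight $-b_j$ at each lateral vertex is the negative of the number of triangles of $P'$ incident to it, i.e.\ an entry of the quiddity sequence of the frieze $\cF(\pi')$ of $P'$. The explicit reduction formulas of Lemma~\ref{Lem:Friezered} and Proposition~\ref{Prop:generalreduction} show that every entry of this reduced quiddity sequence is an entry $p_{rs}$ of the original frieze $\cF(\pi)$ --- either an unchanged quiddity entry $a_i$ or one of the diagonal entries $p_{i-1,j},\,p_{i,j+1}$ created by a cut, all of which already occur in $\cF(\pi)$ --- so each $-b_j$ is the negative of an entry of the frieze of $\pi$.

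The main obstacle I anticipate is the second step: verifying rigorously that a sublotus corresponds to a sub-triangulation obtained by cutting $P$ along diagonals of $\mc{T}$, with each removed component separated by a single diagonal of the triangulation and with the base edge $[1,m]$ always lying in the retained piece. This amounts to matching the parent-tree structure on petals with the dual tree of the triangulation $\mc{T}$, and is where the compatibility between the geometric notion of sublotus and the combinatorial notion of reduction must be pinned down.
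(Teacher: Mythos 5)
Your proposal is correct and follows essentially the same route as the paper: identify $\Lambda(\pi')$ with a sub-polygon of the triangulated $m$-gon $P$ containing the base edge and cut out along diagonals of $\mc{T}$, then apply Lemma~\ref{Lem:Friezered} and Proposition~\ref{Prop:generalreduction} to read off the reduced quiddity as entries of $\cF(\pi)$, and Theorem~\ref{Thm:Lotus-dual} to identify these with the self-intersection numbers. The only difference is that you explicitly iterate the reduction over the several separating diagonals of a general sublotus, whereas the paper's written proof treats a single cut (the iteration is mentioned only informally in the introduction); the compatibility issue you flag, matching the parent-tree of petals with the dual tree of $\mc{T}$, is exactly the point the paper also leaves implicit.
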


\begin{proof}
	Since $C=V(f)$ is Newton non-degenerate, its minimal resolution $\pi$ consists of Steps 1 to $3^{reg}$ of the resolution algorithm of \cite[Algorithm 1.4.22 and Prop.~1.4.29]{Lotus} and hence the dual resolution graph corresponds to the boundary of the Newton lotus $\Lambda(f)$, see Thm.~\ref{Thm:Lotus-dual}.  The same theorem shows that $\Lambda(f)=\Lambda(\pi)$ corresponds to a triangulated $m$-gon $P$. The quiddity sequence of the frieze of $\pi$ is then $\{a_i\}_{i=1}^{m}$, where the $a_i$'s for $ i \in \{ 2, \ldots, m-1 \} $ are precisely the negatives of the self-intersection numbers appearing in $\Gamma(f)$ and $a_1$ and $a_m$ are uniquely determined by $P$, see Lemma \ref{Lem:quidditybase}. \\
	By definition of a partial resolution $\pi'$ (Def.~\ref{Def:partial_resol}~(2)), the lotus of $\pi'$ corresponds to one of the two smaller polygons $P'$ and $P''$ obtained by  cutting $P$ along one of the diagonals in its triangulation. Without loss of generality, assume that the lotus of $\pi'$ corresponds to $P'$. Using Prop.~\ref{Prop:generalreduction} one can compute the quiddity sequence of the frieze of $\pi'$ explicitly: if $[i,j]$ is an diagonal in the triangulation of $P$ (corresponding to the chart of a blowup), then the elements in the quiddity are entries in the frieze of $\pi$ given by one of the two formulas in the proposition.
\end{proof}

This result allows us to see that the entries $p_{ij}$ of the frieze that appear in the quiddity sequences of the reduction of a frieze also correspond to (negatives) of the self-intersection numbers of exceptional curves. In particular, the frieze of $\pi$ contains all the information about any partial resolution $\pi'$. 

\begin{example} \label{Ex:entries-frieze} In our running example $C=V(f)$ with $f=x^{11}-y^8$ there are 6 partial resolutions of $C$, their graphs are given below in Fig. \ref{Fig:partresgraphs}.
	\begin{figure}[h!]
		\begin{tabular}{lll}
		\begin{minipage}{7cm}
			\begin{tikzpicture}[scale=0.6]
				
				\draw [-, color=black, thick](0,0) -- (10,0);
				
				\node [below] at (0,0) {\rot{$-2$}};
				\node [below] at (2,0) {\rot{$-3$}};
				\node [below] at (4,0) {\rot{$-2$}};
				\node [below] at (6,0) {\rot{$-1$}};
				\node [below] at (8,0) {\rot{$-3$}};
				\node [below] at (10,0) {\rot{$-4$}};

				\node[draw,circle, inner sep=1.pt,fill=black] at (0,0){};
				\node[draw,circle, inner sep=1.pt,fill=black] at (2,0){};
				\node[draw,circle, inner sep=1.pt,fill=black] at (4,0){};
				\node[draw,circle, inner sep=1.pt,fill=black] at (6,0){};
				\node[draw,circle, inner sep=1.pt,fill=black] at (8,0){};
				\node[draw,circle, inner sep=1.pt,fill=black] at (10,0){};
				
			\end{tikzpicture}
		\end{minipage}
		& \hspace{0.2cm} &
		\begin{minipage}{6cm}
			\begin{tikzpicture}[scale=0.6]
				\draw [-, color=black, thick](0,0) -- (8,0);
				
				\node [below] at (0,0) {\rot{$-2$}};
				\node [below] at (2,0) {\rot{$-3$}};
				\node [below] at (4,0) {{\color{blue}{$-1$}}};
				\node [below] at (6,0) {{\color{blue}{$-2$}}};
				\node [below] at (8,0) {\rot{$-4$}};

				\node[draw,circle, inner sep=1.pt,fill=black] at (0,0){};
				\node[draw,circle, inner sep=1.pt,fill=black] at (2,0){};
				\node[draw,circle, inner sep=1.pt,fill=black] at (4,0){};
				\node[draw,circle, inner sep=1.pt,fill=black] at (6,0){};
				\node[draw,circle, inner sep=1.pt,fill=black] at (8,0){};
				
			\end{tikzpicture}
		\end{minipage}
		\\[10pt]
		\begin{minipage}{6cm}
			\begin{tikzpicture}[scale=0.7]
				\draw [-, color=black, thick](0,0) -- (6,0);
				
				\node [below] at (0,0) {\rot{$-2$}};
				\node [below] at (2,0) {{\color{green}$-2$}};
				\node [below] at (4,0) {{\color{green}$-1$}};
				\node [below] at (6,0) {\rot{$-4$}};

				\node[draw,circle, inner sep=1.pt,fill=black] at (0,0){};
				\node[draw,circle, inner sep=1.pt,fill=black] at (2,0){};
				\node[draw,circle, inner sep=1.pt,fill=black] at (4,0){};
				\node[draw,circle, inner sep=1.pt,fill=black] at (6,0){};
				
			\end{tikzpicture}
		\end{minipage}
		& &
		\begin{minipage}{6cm}
			\begin{tikzpicture}[scale=0.7]
				\draw [-, color=black, thick](0,0) -- (4,0);
				
				\node [below] at (0,0) {\rot{$-2$}};
				\node [below] at (2,0) {{\color{orange}$-1$}};
				\node [below] at (4,0) {{\color{orange}$-3$}};

				\node[draw,circle, inner sep=1.pt,fill=black] at (0,0){};
				\node[draw,circle, inner sep=1.pt,fill=black] at (2,0){};
				\node[draw,circle, inner sep=1.pt,fill=black] at (4,0){};
			\end{tikzpicture}
		\end{minipage}
		\\[10pt]
		\begin{minipage}{6cm}
			\begin{tikzpicture}[scale=0.8]
				\draw [-, color=black, thick](0,0) -- (2,0);
				
				\node [below] at (0,0) {{\color{violet}$-1$}};
				\node [below] at (2,0) {{\color{violet}$-2$}};

				\node[draw,circle, inner sep=1.pt,fill=black] at (0,0){};
				\node[draw,circle, inner sep=1.pt,fill=black] at (2,0){};

			\end{tikzpicture}
		\end{minipage}
	&  &
		\begin{minipage}{6cm}
			\begin{tikzpicture}[scale=0.8]
				
				\node [below] at (0,0) {$-1$};
				
				\node[draw,circle, inner sep=1.pt,fill=black] at (0,0){};
				
			\end{tikzpicture}
		\end{minipage}
	\end{tabular}

		\caption{Dual graphs of the partial resolutions of $V(x^{11}-y^8)$.} \label{Fig:partresgraphs}
	\end{figure}
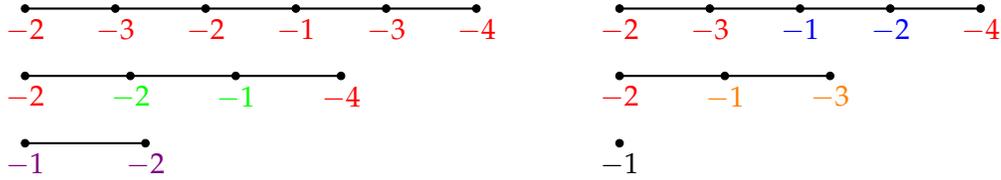

	In Fig.~\ref{Fig:partialResRunning} the corresponding entries of the frieze are highlighted.
	
	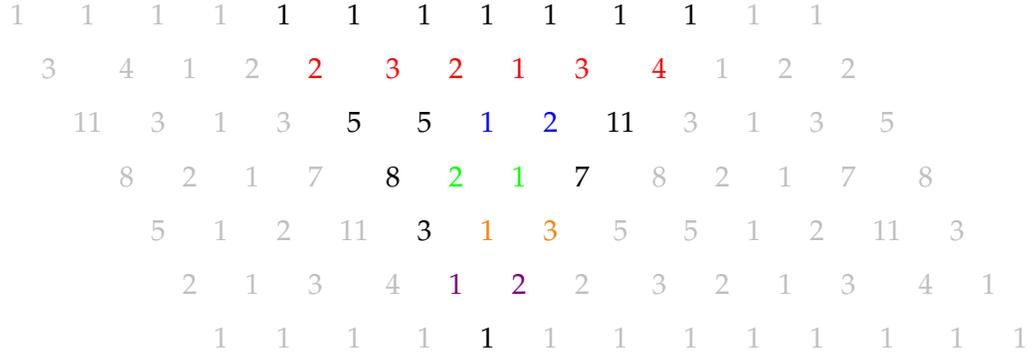
\begin{figure}[h!]
	\begin{tikzcd}[row sep=0.35em, column sep=-0.35em]
		\col{1}  && \col{1}  && \col{1}  && \col{1}   && 1   && 1  && 1   && 1   && 1 &&1 && 1  && \col{1}  && \col{1}  \\
		& \col{3} && \col{4}  && \col{1}     && \col{2}     && \rot{2}     && \rot{3}     && \rot{2}    && \rot{1}    && \rot{3} && \rot{4} && \col{1}  && \col{2}  && \col{2}     \\
		&& \col{11} && \col{3} && \col{1}     && \col{3}     && 5     && 5     && {\color{blue}{1}}    && {\color{blue}{2}}     && 11 &&\col{3} && \col{1}  &&\col{3}  &&\col{5}    \\
		&  && \col{8} && \col{2} && \col{1}     && \col{7}     && 8     && {\color{green}2 }      && {\color{green} 1}     && 7     && \col{8} &&\col{2} && \col{1} && \col{7}  && \col{8}   \\
		 &&  &&\col{5} &&\col{1} && \col{2}     && \col{11}   && 3     && {\color{orange}1}      && {\color{orange}3}   && \col{5}     && \col{5} &&\col{1} && \col{2} && \col{11}  && \col{3}   \\
		  &&&  && \col{2} && \col{1} && \col{3}     && \col{4}   && {\color{violet} 1}     && {\color{violet} 2}      && \col{2}   && \col{3}     &&\col{2} &&\col{1} && \col{3} && \col{4}  && \col{1}   \\
		 &&&&  && \col{1} && \col{1}  && \col{1}   && \col{1}  && 1   && \col{1}   && \col{1}   && \col{1}  && \col{1}  && \col{1} &&\col{1}  && \col{1}  && \col{1}  
	\end{tikzcd}

		\caption{Entries in the frieze corresponding to partial resolutions of $ V( x^{11}-y^8 ) $.}
		\label{Fig:partialResRunning}
	\end{figure}
	
\end{example}

\begin{Bem}
	With the interpretation of dual resolution graphs as boundary edges of triangulated polygons, one can now also count the number of partial resolutions of a curve $C$ as the number of triangulated sub-polygons of the lotus of $C$. Explicitly, if $C$ is irreducible, that is, $\Lambda(C)$ is determined by a continued fraction, then if $\Gamma(C)$ has $n$ nodes, there are $n$ partial resolutions of $C$ (see Example \ref{Ex:entries-frieze}). \\
	For dual resolution graphs $\Gamma(C)$ containing more than one $-1$ decoration, an explicit formula is more complicated to write down and depends on the configuration of the blowups. 
\end{Bem}

\begin{Bem} In Fig.~\ref{Fig:partialResRunning} one can see that not all entries in the fundamental domain of a frieze $\mc{F}(f)$ have an interpretation in terms of negatives of the self-intersection numbers appearing in the dual graph of a partial resolution of $f$. It would be very interesting to find such an interpretation for the remaining entries.
\end{Bem}

\section{Mutation of polygons in terms of lotuses} \label{Sec:mutation}

Given a triangulation $\mc{T}$ of a polygon $ P $, there is the notion of {\em mutation of an inner diagonal}: 
Let $[a,b]$ be an inner diagonal with vertices $ a, b $ of $ P $
and 
let $ c, d $ be the vertices of $ P $ completing $[a,b]$ to a quadrilateral within the triangulation.
The mutation of $[a,b]$ is obtained by flipping the diagonal, i.e.,
we remove $[a,b]$ from the triangulation and introduce the new diagonal $[c,d]$,
 as illustrated in Fig.~\ref{Fig:mutation_diag}. 
 	\begin{figure}[h!]
	\begin{center}
		\begin{tikzpicture}[scale=0.6]
		\node[below] at (0,0) {$ a $};
		\node[right] at (2,1) {$ c $};
		\node[above] at (1,3) {$ b $};
		\node[left] at (-1,2) {$ d $};
		
		\draw (0,0) -- (2,1) -- (1,3) -- (-1,2) -- (0,0);	
		\draw (0,0) -- (1,3);
		\end{tikzpicture}
		\hspace{.4cm}
		\raisebox{1.6cm}{
		\begin{tikzpicture}[scale=1]
			\draw[->] (0,0) -- (2,0);
			\node[above] at (1,0) {\small mutation};
		\end{tikzpicture}
		}
		\hspace{.4cm}
		\begin{tikzpicture}[scale=0.6]
		\node[below] at (0,0) {$ a $};
		\node[right] at (2,1) {$ c $};
		\node[above] at (1,3) {$ b $};
		\node[left] at (-1,2) {$ d $};
		
		\draw (0,0) -- (2,1) -- (1,3) -- (-1,2) -- (0,0);	
		\draw (2,1) -- (-1,2);
		\end{tikzpicture}
	\end{center}
	\caption{Mutation of a diagonal in the corresponding quadrilateral.}  
	\label{Fig:mutation_diag} 
\end{figure}
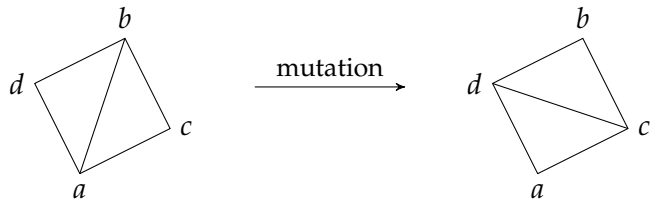%

Clearly, this process provides a new triangulation $ \mu_{[a,b]} (\mc{T}) $ of $ P $ and by mutating at the new diagonal $[c,d]$, we regain the original triangulation of $ P $,
i.e., $ \mu_{[c,d]} (\mu_{[a,b]} (\mc{T})) = \mc{T} $.

Let us point out that this notion of mutation is connected to the mutation of quivers resp.~cluster algebras,
see \cite{CCS06} or \cite[\S~2.2]{FWZ2016}. 

Let us have a look how the mutation of inner diagonals for a triangulated pentagon affect the corresponding lotus.  

\begin{example}
	In Fig.~\ref{Fig:5gon}, we fix the notation for the vertices of the pentagon.
	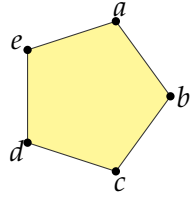
\begin{figure}[h!]
		\begin{center}	
			\begin{tikzpicture}[scale=0.7,cap=round,>=latex]
				
				\node  at (0:1.75) {$b$};
				\node at (72:1.75) {$a$};
				\node at (144:1.75) {$e$};
				\node at (216:1.75) {$d$};
				\node at (288:1.75) {$c$};
				
				\coordinate  (p1) at (0:1.5) {}; 
				\coordinate  (p2) at (72:1.5) {};
				\coordinate (p3) at (144:1.5) {};
				\coordinate (p4) at (216:1.5) {};
				\coordinate (p5) at (288:1.5) {};

				
				\filldraw [fill=yellow!50,draw=black!80] (p1)--(p2)--(p3)--(p4)--(p5) -- cycle;

				\draw (p1) node[fill=black,circle,inner sep=0.039cm] {} circle (0.01cm);	        
				\draw (p2) node[fill=black,circle,inner sep=0.039cm] {} circle (0.01cm);
				\draw (p3) node[fill=black,circle,inner sep=0.039cm] {} circle (0.01cm);
				\draw (p4) node[fill=black,circle,inner sep=0.039cm] {} circle (0.01cm);
				\draw (p5) node[fill=black,circle,inner sep=0.039cm] {} circle (0.01cm);

			\end{tikzpicture}
		\end{center}
		\caption{Notation for the vertices of the pentagon.}  
		\label{Fig:5gon} 
	\end{figure}     
	
	There are five different ways to triangulate the pentagon. 
	In Fig.~\ref{Fig:5-gon_mutation}, we picture all five lotuses and 
	discuss the transformation of the lotus under the process of mutating the inner edges.
	We also provide the equation of the curve (with minimal number of irreducible factors) corresponding to the drawn lotus.  

	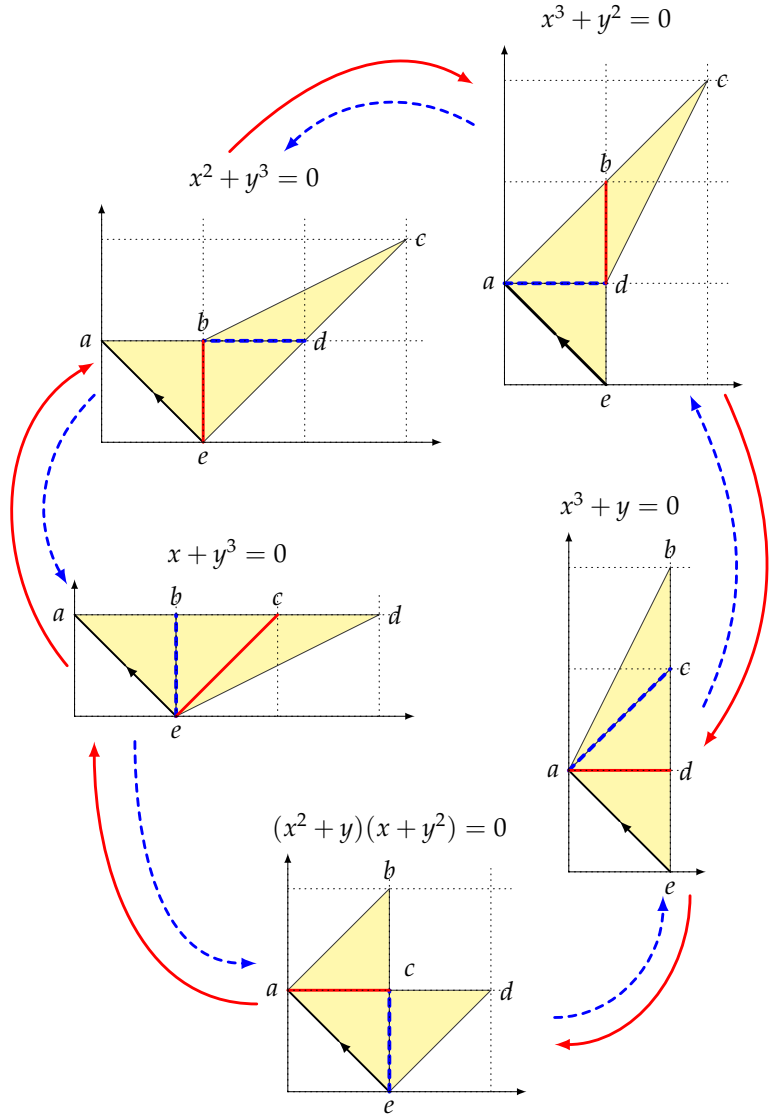
\begin{figure}[h!]
		\begin{center} 
			\scalebox{0.9}{
			\begin{tikzpicture}[scale=0.8,cap=round,>=latex]
				\coordinate  (p1) at (-3,-1) {}; 
				\coordinate  (p2) at (1.25,4) {};
				\coordinate (p3) at (1,13) {};
				\coordinate (p4) at (-5.5,11) {};
				\coordinate (p5) at (-6,5) {}; 
				
				\draw [->][very thick, color=blue, dashed] (0,-2) to [out=0,in=-90] (2,0.25); 
				\draw [->][very thick, color=red] (2.5,0.25) to [out=-90,in=0] (0,-2.5); 
				
				\draw [->][very thick, color=blue, dashed] (2.75,3.75) to [out=65,in=-65] (2.5,9.5); 
				\draw [->][very thick, color=red] (3.15,9.5) to [out=-65,in=55] (2.75,3);

				\draw [->][very thick, color=blue, dashed] (-1.5,14.5) to [out=150,in=45] (-5,14); 
				\draw [->][very thick, color=red] (-6,14) to [out=45,in=150] (-1.5,15.25);

				\draw [->][very thick, color=blue, dashed] (-8.5,9.5) to [out=-135,in=120] (-9,6);
				\draw [->][very thick, color=red] (-9,4.5) to [out=130,in=-150] (-8.5,10.1);

				\draw [->][very thick, color=blue, dashed] (-7.75,3.1) to [out=-90,in=180] (-5.5,-1); 
				\draw [->][very thick, color=red] (-5.5,-1.75) to [out=180,in=-90] (-8.5,3.1);

				\node at (p1) {
					\begin{minipage}{9cm}
						\begin{center}
							\begin{tikzpicture}[scale=1.5]
								
								\filldraw [fill=yellow!40,draw=black!80] (1,0) -- (0,1) -- (1,1) -- cycle;
								\filldraw [fill=yellow!40,draw=black!80] (1,0) -- (1,1) -- (2,1) -- cycle;
								\filldraw [fill=yellow!40,draw=black!80] (0,1) -- (1,1) -- (1,2) -- cycle;
								
								\draw [->, thick] (1,0)--(0.5, 0.5);
								\draw [-, thick] (0.5, 0.5)--(0,1);

								\draw [-, ultra thick, blue, dashed] (1, 0)--(1,1);
								\draw [-, very thick, red] (0,1)--(1,1);

								\draw [dotted] (0,0) grid (2.2,2.2);
								\draw [->] (0,0) -- (2.35,0);
								\draw [->] (0,0) -- (0,2.35);

								\node [below] at (1,0) {$ e $}; 
								\node [left] at (0,1) {$ a $}; 
								\node [above] at (1,2) {$ b $}; 
								\node at (1.2,1.2) {$ c $}; 
								\node [right] at (2,1) {$ d $};

								\node at (1,2.6) {$ (x^2 + y) (x + y^2) = 0 $}; 
								
							\end{tikzpicture}
						\end{center}
					\end{minipage}
				};

				\node at (p2) {
					\begin{minipage}{9cm}
						\begin{center}
							\begin{tikzpicture}[scale=1.5]
								
								\filldraw [fill=yellow!40,draw=black!80] (1,0) -- (0,1) -- (1,1) -- cycle;
								\filldraw [fill=yellow!40,draw=black!80] (0,1) -- (1,1) -- (1,2) -- cycle;
								\filldraw [fill=yellow!40,draw=black!80] (0,1) -- (1,2) -- (1,3) -- cycle;
								
								\draw [->, thick] (1,0)--(0.5, 0.5);
								\draw [-, thick] (0.5, 0.5)--(0,1);

								\draw [-, ultra thick, blue, dashed] (0,1)--(1,2);
								\draw [-, very thick, red] (0,1)--(1,1);
								
								\draw [dotted] (0,0) grid (1.2,3.2);
								\draw [->] (0,0) -- (1.35,0);
								\draw [->] (0,0) -- (0,3.35);

								\node [below] at (1,0) {$ e $}; 
								\node [left] at (0,1) {$ a $}; 
								\node [above] at (1,3) {$ b $}; 
								\node [right] at (1,2) {$ c $}; 
								\node [right] at (1,1) {$ d $};

								\node at (0.5,3.6) {$ x^3 + y = 0 $}; 
							\end{tikzpicture}
						\end{center}
					\end{minipage}
				};

				\node at (p3) {
					\begin{minipage}{9cm}
						\begin{center}
							\begin{tikzpicture}[scale=1.5]
								
								\filldraw [fill=yellow!40,draw=black!80] (1,0) -- (0,1) -- (1,1) -- cycle;
								\filldraw [fill=yellow!40,draw=black!80] (0,1) -- (1,1) -- (1,2) -- cycle;
								\filldraw [fill=yellow!40,draw=black!80] (1,1) -- (1,2) -- (2,3) -- cycle;
								
								\draw [->, very thick] (1,0)--(0.5, 0.5);
								\draw [-, very thick] (0.5, 0.5)--(0,1);
								
								\draw [-, ultra thick, blue, dashed] (0, 1)--(1,1);
								\draw [-, very thick, red] (1, 1)--(1,2);
								
								\draw [dotted] (0,0) grid (2.2,3.2);
								\draw [->] (0,0) -- (2.35,0);
								\draw [->] (0,0) -- (0,3.35);

								\node [below] at (1,0) {$ e $}; 
								\node [left] at (0,1) {$ a $}; 
								\node [above] at (1,2) {$ b $}; 
								\node [right] at (2,3) {$ c $}; 
								\node [right] at (1,1) {$ d $};

								\node at (1,3.6) {$ x^3 + y^2 = 0 $}; 
								
							\end{tikzpicture}
						\end{center}
					\end{minipage}
				};

				\node at (p4) {
					\begin{minipage}{9cm}
						\begin{center}
							\begin{tikzpicture}[scale=1.5]
								
								\filldraw [fill=yellow!40,draw=black!80] (1,0) -- (0,1) -- (1,1) -- cycle;
								\filldraw [fill=yellow!40,draw=black!80] (1,0) -- (1,1) -- (2,1) -- cycle;
								\filldraw [fill=yellow!40,draw=black!80] (1,1) -- (2,1) -- (3,2) -- cycle;
								
								\draw [->, thick] (1,0)--(0.5, 0.5);
								\draw [-, thick] (0.5, 0.5)--(0,1);

								\draw [-, ultra thick, blue, dashed] (2, 1)--(1,1);
								\draw [-, very thick, red] (1, 0)--(1,1);
								
								\draw [dotted] (0,0) grid (3.2,2.2);
								\draw [->] (0,0) -- (3.35,0);
								\draw [->] (0,0) -- (0,2.35);

								\node [below] at (1,0) {$ e $}; 
								\node [left] at (0,1) {$ a $}; 
								\node [above] at (1,1) {$ b $}; 
								\node [right] at (3,2) {$ c $}; 
								\node [right] at (2,1) {$ d $};

								\node at (1.5,2.6) {$ x^2 + y^3 = 0 $}; 
								
							\end{tikzpicture}
						\end{center}
					\end{minipage}
				};

				\node at (p5) {
					\begin{minipage}{9cm}
						\begin{center}
							\begin{tikzpicture}[scale=1.5]
								
								\filldraw [fill=yellow!40,draw=black!80] (1,0) -- (0,1) -- (1,1) -- cycle;
								\filldraw [fill=yellow!40,draw=black!80] (1,0) -- (1,1) -- (2,1) -- cycle;
								\filldraw [fill=yellow!40,draw=black!80] (1,0) -- (2,1) -- (3,1) -- cycle;
								
								\draw [->, thick] (1,0)--(0.5, 0.5);
								\draw [-, thick] (0.5, 0.5)--(0,1);

								\draw [-, ultra thick, blue, dashed] (1, 0)--(1,1);
								\draw [-, very thick, red] (1, 0)--(2,1);
								
								\draw [dotted] (0,0) grid (3.2,1.2);
								\draw [->] (0,0) -- (3.35,0);
								\draw [->] (0,0) -- (0,1.35);

								\node [below] at (1,0) {$ e $}; 
								\node [left] at (0,1) {$ a $}; 
								\node [above] at (1,1) {$ b $}; 
								\node [above] at (2,1) {$ c $}; 
								\node [right] at (3,1) {$ d $};

								\node at (1.5,1.6) {$ x + y^3 = 0 $}; 
								
							\end{tikzpicture}
							
						\end{center}
					\end{minipage}
				};
			\end{tikzpicture}
		}
		\end{center}
		\caption{Mutations of the pentagon illustrated for the lotus with base petal determined by the edge $ [ a, e ] $.
			Mutating the (blue) dashed inner edge of a lotus leads to the next counterclockwise neighbor in the picture.
			Mutating the other (red) inner edge brings us to the clockwise neighbor.}  
		\label{Fig:5-gon_mutation} 
	\end{figure}     
\end{example}

In order to provide the general interpretation of the mutation of inner diagonals for lotuses, 
we have to introduce a way to encode the data of the lotus appropriately. 

\begin{Not}
	\label{Not:Not_mut} 
	Let $ P $ be a triangulated polygon
	and
	let $ \Lambda = \Lambda (P) $ be an embedding of $ P $ into the universal lotus with respect to a chosen basis $ (e_1, e_2) $.
	Fix an inner diagonal $ [ a, b ] $ of $ P $ and let $ c, d $ be the other vertices of $ P $ completing the inner diagonal to the quadrilateral of the triangulation whose diagonal is $  [a, b ] $.
	We denote by $ \alpha, \beta, \gamma, \delta $ the remaining part of the lotus $ \Lambda $, which is connected to the edge $ [ a, c ] $, $ [b,c] $, $ [b,d] $, $ [a,d] $ respectively.
	\\
	We choose the names of the vertices such that $ \alpha $ contains the base petal $ \delta(e_1,e_2) $, 
	as shown in Fig.~\ref{Fig:Not_mutation}. 
	In particular, this implies that $ b $ is the third vertex of the petal $ \delta(a,c) $
	and  that $ d $ is the third vertex of the petal $ \delta(a,b) $.  
	Furthermore, this leads to a natural distinction into two types:

	\begin{enumerate}[$(i)$]
		\item
		when the clockwise ordering of the vertices of the petal $ \delta(a,c) $ is $ (a,c,b) $,
		and
		
		\item
		when the clockwise ordering of the vertices of the petal $ \delta(a,c) $ is $ (a,b,c) $. 
	\end{enumerate}
	
	Notice that it is possible that $ \beta, \gamma $ or $ \delta $ are empty.

	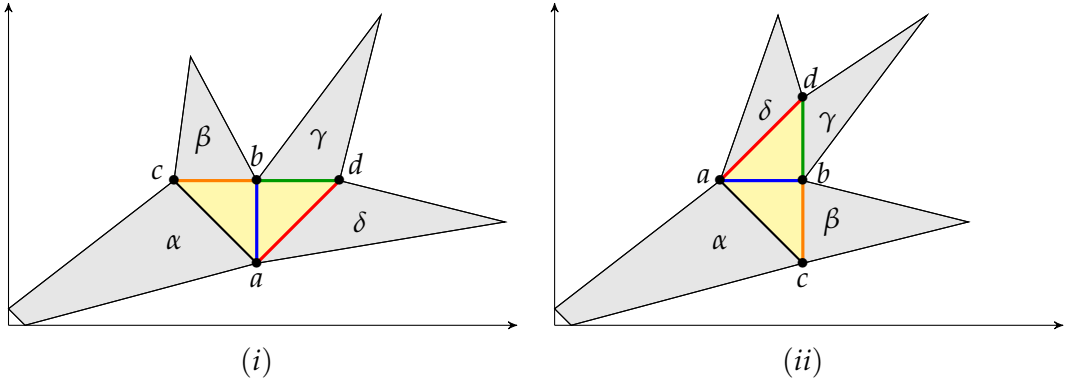
\begin{figure}[h!]
	\begin{center}	
		\begin{tikzpicture}[scale=1.1]
			\node at (1, -1.2) {$(i)$};
			
			\draw [fill=gray!20] (1,0) -- (-1.8,-0.75) -- (-2,-0.55) -- (0,1) -- (0.2,2.5) -- (1,1) -- (2.5,3) -- (2,1)  -- (4,0.5) -- (1,0);
			\draw [fill=yellow!40] (1,0) -- (0,1) -- (1,1) -- (2,1) -- (1,0);

			\draw (1,0) -- (-1.8,-0.75) -- (-2,-0.55) -- (0,1);
			\node at (0,0.25) {$\alpha$};
			
			\draw (0,1) -- (0.2,2.5) -- (1,1);
			\node at (0.35,1.5) {$\beta$};
			
			\draw (1,1) -- (2.5,3) -- (2,1);
			\node at (1.75,1.5) {$\gamma$};
			
			\draw (1,0) -- (4,0.5) -- (2,1);
			\node at (2.25,0.5) {$\delta$};

			\draw [very thick, blue] (1,0) -- (1,1);
			\draw [very thick, red] (1,0) -- (2,1);
			\draw [very thick, green!60!black] (1,1) -- (2,1);
			\draw [very thick, orange] (1,1) -- (0,1);
			\draw[thick] (1,0) -- (0,1);

			\node at (1,0) {\footnotesize $ \bullet $};
			\node at (1,-0.2) {$ a $};
			 
			\node at (2,1) {\footnotesize $ \bullet $};
			\node at (2.2,1.2) {$ d $};
			 
			\node at (1,1) {\footnotesize $ \bullet $}; 
			\node at (1,1.3) {$ b $}; 
			
			\node at (0,1) {\footnotesize $ \bullet $}; 
			\node at (-0.2,1.1) {$ c $};

			\draw[->] (-2,-0.75)--(-2,3.15);
			\draw[->] (-2,-0.75)--(4.15,-0.75);
		\end{tikzpicture}
		\ \ \  \ 
		\begin{tikzpicture}[scale=1.1]
			\node at (1, -1.2) {$(ii)$};

			\draw [fill=gray!20] (1,0) -- (-1.8,-0.75) -- (-2,-0.55) -- (0,1) --  (0.7,3) -- (1,2)  -- (2.5,3) -- (1,1) -- (3,0.5) -- (1,0);
			\draw [fill=yellow!40] (1,0) -- (1,1) -- (1,2) -- (0,1) -- (1,0);

			\draw (1,0) -- (-1.8,-0.75) -- (-2,-0.55) -- (0,1);
			\node at (0,0.25) {$\alpha$};
			
			\draw (0,1) -- (0.7,3) -- (1,2);
			\node at (0.55,1.85) {$\delta$};
			
			\draw (1,1) -- (2.5,3) -- (1,2);
			\node at (1.3,1.7) {$\gamma$};
			
			\draw (1,0) -- (3,0.5) -- (1,1);
			\node at (1.35,0.5) {$\beta$};

			\draw [very thick, blue] (0,1) -- (1,1);
			\draw [very thick, orange] (1,0) -- (1,1);
			\draw [very thick, green!60!black] (1,1) -- (1,2);
			\draw [very thick, red] (1,2) -- (0,1);
			\draw[thick] (1,0) -- (0,1);

			\node at (1,0) {\footnotesize $ \bullet $};
			\node at (1,-0.2) {$ c $};
			
			\node at (1,1) {\footnotesize $ \bullet $};
			\node at (1.25,1.1) {$ b $};
			
			\node at (1,2) {\footnotesize $ \bullet $}; 
			\node at (1.1,2.25) {$ d $}; 
			
			\node at (0,1) {\footnotesize $ \bullet $}; 
			\node at (-0.2,1.05) {$ a $};

			\draw[->] (-2,-0.75)--(-2,3.15);
			\draw[->] (-2,-0.75)--(4.15,-0.75);
		\end{tikzpicture}
	\end{center}
	\caption{Illustration of the notation introduced in Notation~\ref{Not:Not_mut}.}  
	\label{Fig:Not_mutation} 
\end{figure}     
	
\end{Not}

\begin{Thm}
	\label{Thm:Mutation}
	Let $\mc{T}$ be a triangulation of a polygon $ P $ and
	let $ \Lambda = \Lambda (P)$ be an embedding into the universal lotus with respect to a chosen basis $ (e_1, e_2) $.
	Fix an inner diagonal $ [a, b] $.
	Using Notation \ref{Not:Not_mut}, the mutation $ \mu_{[a,b]} (\mc{T}) $ of the diagonal $  [a, b]  $ leads to the following transformation of the lotus $ \Lambda \mapsto \mu_{[a,b]} (\Lambda ) $, where the lotus $\mu_{[a,b]} (\Lambda )$ is defined by:
	\begin{enumerate}[(1)]
		\item 
		If $ \Lambda $ is of type $ (i) $,
		then $  \mu_{[a,b]} (\Lambda ) $ is the lotus of type $ (ii) $ for which the clockwise ordering of the vertices of the quadrilateral with vertices $ \{ a,b,c,d\} $ is the same as in $ \Lambda $ and for which the vertices $ a $ and $ c $ are at the same position as in $ \Lambda $.
		
		\item 
		If $ \Lambda $ is of type $ (ii) $,
		then $  \mu_{[a,b]} (\Lambda ) $ is the lotus of type $ (i) $ for which the analogous rules as in (1) are applied. 
	\end{enumerate} 

	Notice that in both cases $ \alpha, \beta, \gamma, \delta $ are unchanged and are suitably fitted into the universal lotus. 
	In Fig.~\ref{Fig:Lot_mutation}, we provide schematic pictures of the ```mutated lotus" $ \mu_{[a,b]} (\Lambda ) $.  
\end{Thm}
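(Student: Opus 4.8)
The plan is to realize $\mu_{[a,b]}(\Lambda)$ through the embedding theorem and then read off its coordinates by the petal rule. Note first that $\mu_{[a,b]}(\mc{T})$ is again a triangulation of the same polygon $P$, and that flipping the diagonal $[a,b]$ affects only the two triangles of the central quadrilateral, leaving the base petal $\delta(e_1,e_2)\subseteq\alpha$ untouched. Keeping the base edge $[e_1,e_2]$ (equivalently, the quiddity starting vertex $(0,1)$) fixed, Thm~\ref{Thm:Frieze_Coord_Lotus} yields a \emph{unique} embedding of $\mu_{[a,b]}(\mc{T})$ into the universal lotus $\Lambda(e_1,e_2)$. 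I would define $\mu_{[a,b]}(\Lambda)$ to be this embedding; well-definedness is then immediate, and all that remains is to match it with the description in the statement.

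Next I would show that $\alpha$ and the two shared vertices are rigid. Since $\mc{T}$ and $\mu_{[a,b]}(\mc{T})$ coincide on the wing $\alpha$ (which contains the base petal $\delta(e_1,e_2)$) and on its outer edge $[a,c]$, and since the embedding is produced by the petal rule starting from the fixed base, the sub-lotus $\alpha$ together with the edge $[a,c]$ receives identical coordinates in $\Lambda$ and in $\mu_{[a,b]}(\Lambda)$. In particular the vertices $a$ and $c$ keep their positions, which is one of the assertions of the theorem.

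I would then compute the two new apexes using the defining rule of the universal lotus that the petal erected on an edge $[u,v]$ has third vertex $u+v$. In $\mu_{[a,b]}(\mc{T})$ the quadrilateral carries the diagonal $[c,d]$, so the petal on $[a,c]$ is the triangle $\{a,c,d\}$ and the petal erected on the shared edge $[c,d]$ is $\{b,c,d\}$; the petal rule forces
\[
 d = a+c, \qquad b = c+d = a+2c .
\]
Thus the position $a+c$ (formerly that of $b$) is now occupied by $d$, the old position $2a+c$ of $d$ is dropped, and $a+2c$ appears as the new position of $b$; the wings $\beta,\gamma,\delta$ are combinatorially unchanged and are re-attached to the edges $[c,b]$, $[b,d]$, $[a,d]$ with these new endpoints, the fit being unique by Thm~\ref{Thm:Frieze_Coord_Lotus}. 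The case $(ii)\to(i)$ is governed by the identical formulas.

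Finally I would confirm the change of type. Re-applying Notation~\ref{Not:Not_mut} to $\mu_{[a,b]}(\Lambda)$, whose inner diagonal is now $[c,d]$, the vertex shared by $\alpha$ and the diagonal is $c$ rather than $a$; consequently the roles of $a$ and $c$ in the notation are interchanged, and reading the clockwise order of the petal on $[a,c]$ turns $(a,c,\cdot)$ into $(a,\cdot,c)$, i.e.\ converts type $(i)$ into type $(ii)$ and conversely, while an orientation check shows the clockwise cyclic order $a\to c\to b\to d$ of the quadrilateral is preserved. This orientation bookkeeping is the one delicate point: one must follow the positive-basis conditions and the relabeling forced by the diagonal flip carefully enough to see that the embedding just constructed is exactly the type-$(ii)$ (resp.\ type-$(i)$) lotus with $a,c$ fixed and the same clockwise quadrilateral order singled out in the statement. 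Once this is in place, the symmetric second case and the degenerate situations where one of $\beta,\gamma,\delta$ is empty are routine.
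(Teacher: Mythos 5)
Your proposal is correct and follows essentially the same route as the paper: the flip replaces the two petals of the quadrilateral by the petal $\delta(a,c)$ with apex $d$ and the petal $\delta(c,d)$ with apex $b$, while $\alpha$ (hence $a$ and $c$) and the combinatorial structure of $\beta,\gamma,\delta$ are untouched. You are somewhat more explicit than the paper in anchoring well-definedness in Thm.~\ref{Thm:Frieze_Coord_Lotus} and in writing the new coordinates $d\mapsto a+c$, $b\mapsto c+d$, but the underlying argument is the same.
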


	\begin{figure}[h!]
	\begin{center}	
		\begin{tikzpicture}[scale=1.1]
			\node at (1, -1.2) {$\mu_{[a,b]} (\Lambda ) $ if $ \Lambda $ is of type $ (i)$};

			\draw [fill=gray!20] (1,0) -- (-1.8,-0.75) -- (-2,-0.55) -- (0,1) --  (0.7,3) -- (1,2)  -- (2.5,3) -- (1,1) -- (3,0.5) -- (1,0);
			\draw [fill=yellow!40] (1,0) -- (1,1) -- (1,2) -- (0,1) -- (1,0);

			\draw (1,0) -- (-1.8,-0.75) -- (-2,-0.55) -- (0,1);
			\node at (0,0.25) {$\alpha$};
			
			\draw (0,1) -- (0.7,3) -- (1,2);
			\node at (0.55,1.85) {$\beta$};
			
			\draw (1,1) -- (2.5,3) -- (1,2);
			\node at (1.3,1.7) {$\gamma$};
			
			\draw (1,0) -- (3,0.5) -- (1,1);
			\node at (1.35,0.5) {$\delta$};

			\draw [very thick, purple] (0,1) -- (1,1);
			\draw [very thick, red] (1,0) -- (1,1);
			\draw [very thick, green!60!black] (1,1) -- (1,2);
			\draw [very thick, orange] (1,2) -- (0,1);
			\draw[thick] (1,0) -- (0,1);

			\node at (1,0) {\footnotesize $ \bullet $};
			\node at (1,-0.2) {$ a $};
			
			\node at (1,1) {\footnotesize $ \bullet $};
			\node at (1.25,1.1) {$ d $};
			
			\node at (1,2) {\footnotesize $ \bullet $}; 
			\node at (1.1,2.25) {$ b $}; 
			
			\node at (0,1) {\footnotesize $ \bullet $}; 
			\node at (-0.2,1.05) {$ c $};

			\draw[->] (-2,-0.75)--(-2,3.15);
			\draw[->] (-2,-0.75)--(4.15,-0.75);
		\end{tikzpicture}
			\ \ \  \ 
	\begin{tikzpicture}[scale=1.1]
		\node at (1, -1.2) {$\mu_{[a,b]} (\Lambda ) $ if $ \Lambda $ is of type $ (ii)$};
		
		\draw [fill=gray!20] (1,0) -- (-1.8,-0.75) -- (-2,-0.55) -- (0,1) -- (0.2,2.5) -- (1,1) -- (2.5,3) -- (2,1)  -- (4,0.5) -- (1,0);
		\draw [fill=yellow!40] (1,0) -- (0,1) -- (1,1) -- (2,1) -- (1,0);

		\draw (1,0) -- (-1.8,-0.75) -- (-2,-0.55) -- (0,1);
		\node at (0,0.25) {$\alpha$};
		
		\draw (0,1) -- (0.2,2.5) -- (1,1);
		\node at (0.35,1.5) {$\delta$};
		
		\draw (1,1) -- (2.5,3) -- (2,1);
		\node at (1.75,1.5) {$\gamma$};
		
		\draw (1,0) -- (4,0.5) -- (2,1);
		\node at (2.25,0.5) {$\beta$};

		\draw [very thick, purple] (1,0) -- (1,1);
		\draw [very thick, orange] (1,0) -- (2,1);
		\draw [very thick, green!60!black] (1,1) -- (2,1);
		\draw [very thick, red] (1,1) -- (0,1);
		\draw[thick] (1,0) -- (0,1);

		\node at (1,0) {\footnotesize $ \bullet $};
		\node at (1,-0.2) {$ c $};
		
		\node at (2,1) {\footnotesize $ \bullet $};
		\node at (2.2,1.2) {$ b $};
		
		\node at (1,1) {\footnotesize $ \bullet $}; 
		\node at (1,1.3) {$ d $}; 
		
		\node at (0,1) {\footnotesize $ \bullet $}; 
		\node at (-0.2,1.1) {$ a $};

		\draw[->] (-2,-0.75)--(-2,3.15);
		\draw[->] (-2,-0.75)--(4.15,-0.75);
	\end{tikzpicture}
	\end{center}
	\caption{Symbolic picture of the lotus  $ \mu_{[a,b]} (\Lambda ) $.}  
	\label{Fig:Lot_mutation} 
\end{figure}
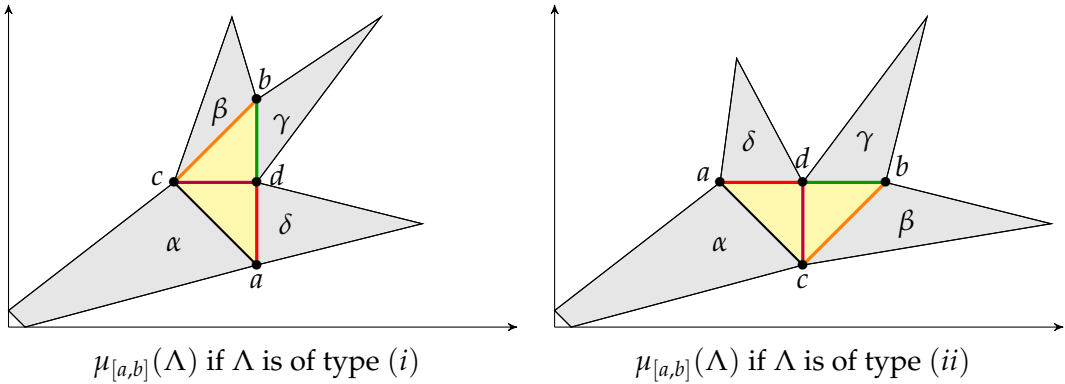     

\begin{proof}
	By the rule of the mutation for an inner diagonal (cf.~Fig.~\ref{Fig:mutation_diag}), 
	$ [ a, b ] $ is replaced by the new inner diagonal $ [c, d ] $. 
	Hence, the petals $ \delta(a,c) $ with third vertex $ b $ and $ \delta(a,b) $ with third vertex $ d $ are replaced by the 
	petal $ \delta(a,c) $ with third vertex $ d $ and $ \delta (c,d) $ with third vertex $ b $ respectively. 
	Clearly, the ordering of the vertices of the quadrilateral determined by $ \{ a, b ,c , d\} $ remains unchanged. 
	The other parts of the triangulation are untouched. 
	Therefore, the claim follows. 	
\end{proof} 

As an immediate consequence, we obtain the following formulas for the number of triangles incident to a fixed vertex of $ \mu_{[a,b] } (\Lambda) $,
which recalls the well-known formula for the change of the quiddity sequence along the mutation of the inner diagonal. 

\begin{cor}
	Set $ \Lambda' :=  \mu_{[a,b] } (\Lambda) $. 
	If we denote by $ \Delta(\Lambda, v) $ the number of triangles incident to a given vertex $ v $ of $ \Lambda $, then we have
	\[  
		\Delta(\Lambda', a) = \Delta(\Lambda, a) - 1 \ , \ \ \ \ 
		\Delta(\Lambda', b) = \Delta(\Lambda, b) - 1 \ , 
	\]
	\[ 
		\Delta(\Lambda', c) = \Delta(\Lambda, c) + 1 \ , \ \ \ \
		\Delta(\Lambda', d) = \Delta(\Lambda, d) + 1 \ ,
	\]	
	and
	$ \Delta(\Lambda', v) =  \Delta(\Lambda, v) $
	if $ v \notin \{ a, b, c, d\} $.  
\end{cor}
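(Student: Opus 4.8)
The plan is to reduce the statement to a purely local count of triangle incidences, using that $\Delta(\Lambda,v)$ is nothing but the quiddity entry at $v$ and that the flip $\mu_{[a,b]}$ alters only the two triangles meeting along $[a,b]$. First I would recall that, since $\Lambda = \Lambda(P)$ is an embedding of the triangulated polygon $P$ into the universal lotus, the number $\Delta(\Lambda,v)$ of triangles of $\Lambda$ incident to a vertex $v$ coincides with the number of triangles of the triangulation $\mc{T}$ incident to the corresponding vertex of $P$; that is, $\Delta(\Lambda,v)$ is exactly the quiddity entry at $v$ in the sense of Definition~\ref{Def:friezetriang}. By Theorem~\ref{Thm:Mutation} the lotus $\Lambda' := \mu_{[a,b]}(\Lambda)$ is the embedding of the mutated triangulation $\mu_{[a,b]}(\mc{T})$, so $\Delta(\Lambda',v)$ equals the number of triangles of $\mu_{[a,b]}(\mc{T})$ incident to $v$. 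Hence it suffices to compare the incidence counts of $\mc{T}$ and $\mu_{[a,b]}(\mc{T})$.

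Next I would use that, being an inner diagonal of $\mc{T}$, the diagonal $[a,b]$ is shared by exactly two triangles, namely $\{a,b,c\}$ and $\{a,b,d\}$, where $c,d$ are the vertices completing $[a,b]$ to the quadrilateral of the triangulation (cf.~Fig.~\ref{Fig:mutation_diag} and Notation~\ref{Not:Not_mut}); here $a,b,c,d$ are four pairwise distinct vertices. The flip $\mu_{[a,b]}$ deletes $[a,b]$ and inserts $[c,d]$, thereby replacing these two triangles by $\{a,c,d\}$ and $\{b,c,d\}$ while leaving every other triangle of $\mc{T}$ unchanged. Consequently, for any vertex $v \notin \{a,b,c,d\}$ the set of triangles incident to $v$ is the same in $\mc{T}$ and in $\mu_{[a,b]}(\mc{T})$, which gives $\Delta(\Lambda',v) = \Delta(\Lambda,v)$.

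Finally I would carry out the local count for the four affected vertices. The vertex $a$ lies in both $\{a,b,c\}$ and $\{a,b,d\}$ but, among the new triangles, only in $\{a,c,d\}$, so it loses one incidence; by symmetry the same holds for $b$. Dually, $c$ lies only in $\{a,b,c\}$ before the flip but in both $\{a,c,d\}$ and $\{b,c,d\}$ afterwards, gaining one incidence, and likewise for $d$. Since all triangles outside the quadrilateral are untouched, these are the only contributions, yielding $\Delta(\Lambda',a) = \Delta(\Lambda,a) - 1$, $\Delta(\Lambda',b) = \Delta(\Lambda,b) - 1$, $\Delta(\Lambda',c) = \Delta(\Lambda,c) + 1$, and $\Delta(\Lambda',d) = \Delta(\Lambda,d) + 1$. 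I do not expect a genuine obstacle here: all the content lies in the locality of the diagonal flip, and the only point requiring care is the standard fact that an inner diagonal borders exactly two triangles, so that the four triangles entering the count are well defined and $a,b,c,d$ are pairwise distinct.
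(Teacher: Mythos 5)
Your proposal is correct and matches the paper's treatment: the paper states this corollary as an immediate consequence of Theorem~\ref{Thm:Mutation}, whose proof already records that the flip replaces the two petals on $[a,b]$ by the two petals on $[c,d]$ while leaving the rest of the triangulation untouched, which is exactly the local incidence count you carry out. No gap.
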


\begin{Bem}
	Clearly, the plane curves corresponding to $ \Lambda $ and to $ \mu_{[a,b] } (\Lambda) $ can be rather different.
	Nonetheless, the common parts $ \alpha, \beta, \gamma, \delta $ imply that their respective desingularizations are related.
	In order to see this, recall Remark \ref{Rk:charts}.  
	For example, the common $ \alpha $-part provides that the beginning of the resolution process coincide until we reach the chart corresponding to the edge $ [a, c] $.  
\end{Bem}

\section{Further questions}

We end by discussing further directions that could be considered as continuation. 

Up to now, we always considered Newton non-degenerate curves and their corresponding lotuses. 
There are plane curves that are not Newton non-degenerate and it is possible to define a lotus associated to them  by gluing disjoint unions of lotuses of the form as considered in the present article, e.g., see 
\cite[Def.~5.26]{Lotus}. 
Hence, it is reasonable to ask

\begin{Qu}
	How do the results of the present article generalize for (plane) curves that are not Newton non-degenerate? In particular, can we associate ``higher'' friezes to them?
\end{Qu}

First considerations suggest that re-embeddings resulting from which the curve becomes Newton non-degenerate in some higher-dimensional ambient space provide a suitable perspective. 
In this case, 
a natural candidate to substitute triangulations of polygons by triangles would be their analog using $ d $-simplices instead of triangles,
where $ d $ is the dimension of the eventual ambient space.  
One motivation for the re-embeddings is Teissier's perspective on the problem of local uniformization via overweight deformations, see \cite{Teissier, Teissier2, Teissier3}.
See also \cite{GoldinTeissier,AnaPedroHussein}, where the resolution of curves with one toric morphism is discussed,
and \cite{Tevelev}, where it is shown that whenever a desingularization exists then it can be realized as a single toric morphism.  
\\
Let us explain the reasons for this approach on an example.

\begin{example}
	Consider the plane curve $ C \subset \mathbb{A}_\CC^2 $ defined by the vanishing locus of 
	\[ 
		f = ( y^2  - x^3 )^5 - x^{14} y  \in \CC[x,y] \ .
	\] 
	The Newton polyhedron has a single compact edge with vertices $ (15,0) $ and $ (0,10) $.
	Since the restriction of $ f $ to this edge is $ ( x^2  - y^3 )^5 $,
	which is not smooth on the torus $ (\CC^*)_{x,y}^2 $ as it is not reduced,
	the curve is not Newton non-degenerate. 
	We re-embed by introducing a new variable $ z $ fulfilling the relation
	$ z = y^2 - x^3 $, i.e.,
	we apply the isomorphism
	$ 
		\CC [x,y] \cong \CC[x,y,z]/ \langle z - y^2 + x^3 \rangle  	
	$ 
	and determine the image of $ C $ considered as curve in $ \mathbb{A}_\CC^3 $. 
	By substitution, the latter can be described by the relations
	\[
		\begin{array}{lcl}
			y^2 - x^3 & = & z \ ,
			\\[3pt]
			z^5 - x^{14} y & = &  0 \ .  
		\end{array} 
	\]
	By choosing the weights $ W(x) = 10, W(y) = 15, W(z) = 31 $, 
	it can be seen that this is an overweight deformation of the toric variety $ \mathcal{X} := V (y^2 - x^3,  z^5 - x^{14} y  ) $
	(for the definition of an overweight deformation, we refer to \cite[Def.~3.1]{Teissier2}).
	In particular, there is a close connection between the resolution of $ \mathcal{X} $ and that of $ C $.
	\\
	Since $ \mathcal{X} \subset \mathbb{A}_\CC^3 $ lives in a three-dimensional ambient space, 
	the underlying lattice $ N $ has rank $ 3 $ and
	the toric resolution of $ \mathcal{X} $ is a subdivision of the cone $ \sigma_0 := \RR_{\geq 0}^3 $.    
	Hence, it is reasonable to work with tetrahedra instead of triangles for the associated lotus.
	Furthermore, in the re-embedded situation, 
	the blowup of the closed point has three charts and not two,
	which also supports the approach via tetrahedra (cf.~Remark \ref{Rk:charts}).
\end{example}

In Def.~\ref{Def:lotus_d}, we propose a definition for the variant of the universal lotus in the setting of Newton non-degenerate curve singularities living in higher-dimensional ambient spaces. 
	Notice that this is a special case reflecting that the desingularization is obtained by blowing up closed points.
	In \cite[Section~9]{PPP-cerfvolant}, Popescu-Pampu introduced the definition of an arbitrary dimensional universal lotus. 
	The latter is far more general, but also more technical than Def.~\ref{Def:lotus_d}.

\begin{defi}
	\label{Def:lotus_d} 
	Let $ N $ be a lattice of rank $ d $ with a chosen basis $ ( e_1, \ldots, e_d ) $.
	We define the {\em base petal} as the convex and compact $ d $-simplex $ \delta (e_1, \ldots, e_d) \subseteq N_\RR $
	with vertices $ e_1, \ldots, e_d, e_1 + \cdots + e_d $.
	The points $ e_1, \ldots, e_d $ are called the {\em basic vertices} of the petal. 
	\\
	We construct more petals as follows: 
	Let $ v_1, \ldots, v_d $ be the vertices of any facet of the petal $ \delta (e_1, \ldots, e_d) $ for which $ e_1 + \cdots + e_d $ is one of the vertices. 
	Since $ v_1, \ldots, v_d $ is a basis for $ N $, the petal $ \delta(v_1, \ldots, v_d) $ is the one constructed from this basis.
	In other words, it is the $ d $-simplex with vertices $ v_1, \ldots, v_d, v_1 + \cdots + v_d $.
	\\
	By continued iteration of this process, we obtain an infinite simplicial complex in $ \sigma_0 := \langle e_1, \ldots, e_d \rangle_{\RR_{\geq 0}} \cong \RR_{\geq 0}^d $,
		which we call the {\em universal barycentric lotus $ \Lambda(e_1, \ldots, e_d) $} of $ N $ relative to the basis $ (e_1, \ldots, e_d) $. 
\end{defi}

Since this leads us to higher-dimensional lotuses, we could also pass from curves to singularities of dimension $ \geq 2 $. 
The notion of Newton non-degenerate singularities is not limited to curves. 
More generally, quasi-ordinary singularities are a natural class to be taken into account.
Given $ X \subseteq \mathbb{A}_\CC^n $ of dimension $ d $ it is called {\em quasi-ordinary} if there exists a projection $ X \to \mathbb{A}^d_\CC $ such that its discriminant locus is a simple normal crossing divisor. 
In~\cite{Bernd-Hussein}, the irreducible hypersurface case is considered. 
More precisely, there is a construction of re-embeddings using weighted polyhedra determined so that a connection as overweight deformation of an irreducible toric variety is deduced. 

Notice that, in view of Remark \ref{Rk:charts}, there are more possible centers than just closed points in higher dimension. 
This needs to be encountered, when investigating the higher dimensional case,
cf.~\cite[Section~9]{PPP-cerfvolant}.

\begin{Qu}
	How do the results of the present article generalize to higher dimensions?
\end{Qu}

\def\cprime{$'$}


\begin{thebibliography}{10}

\bibitem{Amiot}
Claire Amiot.
\newblock Cluster categories for algebras of global dimension 2 and quivers
  with potential.
\newblock {\em Ann. Inst. Fourier (Grenoble)}, 59(6):2525--2590, 2009.

\bibitem{BaurIsfahan}
Karin Baur.
\newblock Grassmannians and cluster structures.
\newblock {\em Bull. Iranian Math. Soc.}, 47(suppl. 1):S5--S33, 2021.

\bibitem{BFGST-survey}
Karin Baur, Eleonore Faber, Sira Gratz, Khrystyna Serhiyenko, and Gordana
  Todorov.
\newblock Conway-{C}oxeter friezes and mutation: a survey.
\newblock In {\em Advances in the mathematical sciences}, volume~15 of {\em
  Assoc. Women Math. Ser.}, pages 47--68. Springer, Cham, 2018.

\bibitem{BFGST18}
Karin Baur, Eleonore Faber, Sira Gratz, Khrystyna Serhiyenko, and Gordana
  Todorov.
\newblock Mutation of friezes.
\newblock {\em Bull. Sci. Math.}, 142:1--48, 2018.

\bibitem{BFGST2}
Karin Baur, Eleonore Faber, Sira Gratz, Khrystyna Serhiyenko, and Gordana
  Todorov.
\newblock Friezes satisfying higher {${\rm SL}_k$}-determinants.
\newblock {\em Algebra Number Theory}, 15(1):29--68, 2021.

\bibitem{BrohmeDiss}
S.~Brohme.
\newblock {\em {Monodromie{\"u}berlagerung der versellen Deformation zyklischer
  Quotientensingularit{\"a}ten}}.
\newblock 2002.
\newblock Thesis (Ph.D.)--Universit\"at Hamburg.

\bibitem{BMRRT06}
Aslak~Bakke Buan, Bethany Marsh, Markus Reineke, Idun Reiten, and Gordana
  Todorov.
\newblock Tilting theory and cluster combinatorics.
\newblock {\em Adv. Math.}, 204(2):572--618, 2006.

\bibitem{CCS06}
P.~Caldero, F.~Chapoton, and R.~Schiffler.
\newblock Quivers with relations arising from clusters ({$A_n$} case).
\newblock {\em Trans. Amer. Math. Soc.}, 358(3):1347--1364, 2006.

\bibitem{CalderoChapoton}
Philippe Caldero and Fr{\'e}d{\'e}ric Chapoton.
\newblock Cluster algebras as {H}all algebras of quiver representations.
\newblock {\em Comment. Math. Helv.}, 81(3):595--616, 2006.

\bibitem{CanakciSchiffler}
{I}lke Canakci and Ralf Schiffler.
\newblock Cluster algebras and continued fractions.
\newblock {\em Compos. Math.}, 154(3):565--593, 2018.

\bibitem{CoCo1}
John~H. Conway and Harold S.~M. Coxeter.
\newblock Triangulated polygons and frieze patterns.
\newblock {\em Math. Gaz.}, 57(400):87--94, 1973.

\bibitem{CoCo2}
John~H. Conway and Harold~S.M. Coxeter.
\newblock Triangulated polygons and frieze patterns.
\newblock {\em Math. Gaz.}, 57(401):175--183, 1973.

\bibitem{CLS}
David~A. Cox, John~B. Little, and Henry~K. Schenck.
\newblock {\em Toric varieties}, volume 124 of {\em Graduate Studies in
  Mathematics}.
\newblock American Mathematical Society, Providence, RI, 2011.

\bibitem{Coxeter}
Harold S.~M. Coxeter.
\newblock Frieze patterns.
\newblock {\em Acta Arith.}, 18:297--310, 1971.

\bibitem{Cut2004}
Steven~Dale Cutkosky.
\newblock {\em Resolution of singularities}, volume~63 of {\em Graduate Studies
  in Mathematics}.
\newblock American Mathematical Society, Providence, RI, 2004.

\bibitem{AnaPedroHussein}
Ana~Belen de~Felipe, Pedro~D. Gonz\'{a}lez~P\'{e}rez, and Hussein Mourtada.
\newblock Resolving singularities of curves with one toric morphism.
\newblock {\em Math. Ann.}, 2022.

\bibitem{dJP}
Theo de~Jong and Gerhard Pfister.
\newblock {\em Local analytic geometry}.
\newblock Advanced Lectures in Mathematics. Friedr. Vieweg \& Sohn,
  Braunschweig, 2000.
\newblock Basic theory and applications.

\bibitem{FMP}
Eleonore Faber, Bethany Marsh, and Matthew Pressland.
\newblock {Reduction of Frobenius extriangulated categories}, 2023.
\newblock \url{https://arxiv.org/abs/2308.16232}.

\bibitem{FWZ2016}
Sergey Fomin, Lauren Williams, and Andrei Zelevinsky.
\newblock Introduction to cluster algebras. chapters 1-3, 2016.

\bibitem{FZ2002}
Sergey Fomin and Andrei Zelevinsky.
\newblock Cluster algebras. {I}. {F}oundations.
\newblock {\em J. Amer. Math. Soc.}, 15(2):497--529, 2002.

\bibitem{FZ2003II}
Sergey Fomin and Andrei Zelevinsky.
\newblock Cluster algebras. {II}. {F}inite type classification.
\newblock {\em Invent. Math.}, 154(1):63--121, 2003.

\bibitem{Fulton}
William Fulton.
\newblock {\em Introduction to toric varieties}, volume 131 of {\em Annals of
  Mathematics Studies}.
\newblock Princeton University Press, Princeton, NJ, 1993.
\newblock The William H. Roever Lectures in Geometry.

\bibitem{Lotus}
Evelia~R. Garc\'{\i}a~Barroso, Pedro~D. Gonz\'{a}lez~P\'{e}rez, and Patrick
  Popescu-Pampu.
\newblock The combinatorics of plane curve singularities: how {N}ewton polygons
  blossom into lotuses.
\newblock In {\em Handbook of geometry and topology of singularities. {I}},
  pages 1--150. Springer, Cham, [2020] \copyright 2020.

\bibitem{GLS-ClusterChar}
Christof Gei\ss, Bernard Leclerc, and Jan Schr\"{o}er.
\newblock Rigid modules over preprojective algebras.
\newblock {\em Invent. Math.}, 165(3):589--632, 2006.

\bibitem{GoldinTeissier}
Rebecca Goldin and Bernard Teissier.
\newblock Resolving singularities of plane analytic branches with one toric
  morphism.
\newblock In {\em Resolution of singularities ({O}bergurgl, 1997)}, volume 181
  of {\em Progr. Math.}, pages 315--340. Birkh\"{a}user, Basel, 2000.

\bibitem{HenrySoizic}
Claire-Soizic Henry.
\newblock Coxeter friezes and triangulations of polygons.
\newblock {\em Amer. Math. Monthly}, 120(6):553--558, 2013.

\bibitem{IY08}
Osamu Iyama and Yuji Yoshino.
\newblock Mutation in triangulated categories and rigid {C}ohen-{M}acaulay
  modules.
\newblock {\em Invent. Math.}, 172(1):117--168, 2008.

\bibitem{JKS16}
Bernt~Tore Jensen, Alastair~D. King, and Xiuping Su.
\newblock A categorification of {G}rassmannian cluster algebras.
\newblock {\em Proc. Lond. Math. Soc. (3)}, 113(2):185--212, 2016.

\bibitem{Kidoh}
Rie Kidoh.
\newblock Hilbert schemes and cyclic quotient surface singularities.
\newblock {\em Hokkaido Math. J.}, 30(1):91--103, 2001.

\bibitem{KollarResolutionBook}
J\'{a}nos Koll\'{a}r.
\newblock {\em Lectures on resolution of singularities}, volume 166 of {\em
  Annals of Mathematics Studies}.
\newblock Princeton University Press, Princeton, NJ, 2007.

\bibitem{Kollar}
J\'{a}nos Koll\'{a}r.
\newblock Partial resolution by toroidal blow-ups.
\newblock {\em Tunis. J. Math.}, 1(1):3--12, 2019.

\bibitem{Laufer}
Henry~B. Laufer.
\newblock {\em Normal two-dimensional singularities}.
\newblock Annals of Mathematics Studies, No. 71. Princeton University Press,
  Princeton, N.J.; University of Tokyo Press, Tokyo, 1971.

\bibitem{Crossroads}
Sophie Morier-Genoud.
\newblock Coxeter's frieze patterns at the crossroads of algebra, geometry and
  combinatorics.
\newblock {\em Bull. Lond. Math. Soc.}, 47(6):895--938, 2015.

\bibitem{FareyBoat}
Sophie Morier-Genoud and Valentin Ovsienko.
\newblock Farey boat: continued fractions and triangulations, modular group and
  polygon dissections.
\newblock {\em Jahresber. Dtsch. Math.-Ver.}, 121(2):91--136, 2019.

\bibitem{Farey}
Sophie Morier-Genoud, Valentin Ovsienko, and Serge Tabachnikov.
\newblock {${\rm SL}_2(\Bbb{Z})$}-tilings of the torus, {C}oxeter-{C}onway
  friezes and {F}arey triangulations.
\newblock {\em Enseign. Math.}, 61(1-2):71--92, 2015.

\bibitem{Bernd-Hussein}
Hussein Mourtada and Bernd Schober.
\newblock A polyhedral characterization of quasi-ordinary singularities.
\newblock {\em Mosc. Math. J.}, 18(4):755--785, 2018.

\bibitem{Muir}
Thomas Muir.
\newblock {\em A treatise on the theory of determinants}.
\newblock Revised and enlarged by William H. Metzler. Dover Publications, Inc.,
  New York, 1960.

\bibitem{OdaConvexBodies}
Tadao Oda.
\newblock {\em Convex bodies and algebraic geometry}, volume~15 of {\em
  Ergebnisse der Mathematik und ihrer Grenzgebiete (3) [Results in Mathematics
  and Related Areas (3)]}.
\newblock Springer-Verlag, Berlin, 1988.
\newblock An introduction to the theory of toric varieties, Translated from the
  Japanese.

\bibitem{Perron}
Oskar Perron.
\newblock {\em Die {L}ehre von den {K}ettenbr\"{u}chen}.
\newblock Chelsea Publishing Co., New York, 1950.
\newblock 2d ed.

\bibitem{Patrick-continued-fractions}
Patrick Popescu-Pampu.
\newblock The geometry of continued fractions and the topology of surface
  singularities.
\newblock In {\em Singularities in geometry and topology 2004}, volume~46 of
  {\em Adv. Stud. Pure Math.}, pages 119--195. Math. Soc. Japan, Tokyo, 2007.

\bibitem{PPP-cerfvolant}
Patrick Popescu-Pampu.
\newblock Le cerf-volant d'une constellation.
\newblock {\em Enseign. Math. (2)}, 57(3-4):303--347, 2011.

\bibitem{ReidQuotientSing}
Miles Reid.
\newblock La correspondance de {M}c{K}ay.
\newblock Number 276, pages 53--72. 2002.
\newblock S\'{e}minaire Bourbaki, Vol. 1999/2000.

\bibitem{Riemenschneider74}
Oswald Riemenschneider.
\newblock Deformationen von {Q}uotientensingularit\"{a}ten (nach zyklischen
  {G}ruppen).
\newblock {\em Math. Ann.}, 209:211--248, 1974.

\bibitem{Riemenschneider98}
Oswald Riemenschneider.
\newblock Cyclic quotient surface singularities: constructing the {A}rtin
  component via the {M}c{K}ay-quiver.
\newblock Number 1033, pages 163--171. 1998.
\newblock Singularities and complex analytic geometry (Japanese) (Kyoto, 1997).

\bibitem{Schifflerbook}
Ralf Schiffler.
\newblock {\em Quiver representations}.
\newblock CMS Books in Mathematics/Ouvrages de Math\'ematiques de la SMC.
  Springer, Cham, 2014.

\bibitem{Scott06}
Jeanne Scott.
\newblock Grassmannians and cluster algebras.
\newblock {\em Proc. London Math. Soc. (3)}, 92(2):345--380, 2006.

\bibitem{Stanley2}
Richard~P. Stanley.
\newblock {\em Enumerative combinatorics. {V}ol. 2}, volume~62 of {\em
  Cambridge Studies in Advanced Mathematics}.
\newblock Cambridge University Press, Cambridge, 1999.
\newblock With a foreword by Gian-Carlo Rota and appendix 1 by Sergey Fomin.

\bibitem{Stevens}
Jan Stevens.
\newblock The versal deformation of cyclic quotient singularities.
\newblock In {\em Deformations of surface singularities}, volume~23 of {\em
  Bolyai Soc. Math. Stud.}, pages 163--201. J\'{a}nos Bolyai Math. Soc.,
  Budapest, 2013.

\bibitem{Teissier}
Bernard Teissier.
\newblock Valuations, deformations, and toric geometry.
\newblock In {\em Valuation theory and its applications, {V}ol. {II}
  ({S}askatoon, {SK}, 1999)}, volume~33 of {\em Fields Inst. Commun.}, pages
  361--459. Amer. Math. Soc., Providence, RI, 2003.

\bibitem{Teissier2}
Bernard Teissier.
\newblock Overweight deformations of affine toric varieties and local
  uniformization.
\newblock In {\em Valuation theory in interaction}, EMS Ser. Congr. Rep., pages
  474--565. Eur. Math. Soc., Z\"{u}rich, 2014.

\bibitem{Teissier3}
Bernard Teissier.
\newblock Some ideas in need of clarification in resolution of singularities
  and the geometry of discriminants.
\newblock In {\em Mathematics going forward---collected mathematical
  brushstrokes}, volume 2313 of {\em Lecture Notes in Math.}, pages 29--41.
  Springer, Cham, [2023] \copyright 2023.

\bibitem{Tevelev}
Jenia Tevelev.
\newblock On a question of {B}. {T}eissier.
\newblock {\em Collect. Math.}, 65(1):61--66, 2014.

\bibitem{Wall}
C.~T.~C. Wall.
\newblock {\em Singular points of plane curves}, volume~63 of {\em London
  Mathematical Society Student Texts}.
\newblock Cambridge University Press, Cambridge, 2004.

\end{thebibliography}
\end{document}